\def\tb{\textbf}
\def\mc{\mathcal}
\def\ms{\mathscr}
\def\mb{\mathbb}
\def\mf{\mathfrak}
\def\a{{\alpha}}
\def\b{{\beta}}
\def\B{{\mathcal{B}}} 
\def\e{\varepsilon}
\def\E{{\mathcal{E}}} 
\def\F{{\mathcal{F}}} 
\def\G{{\mathcal{G}}}
\def\L{{\mathcal{L}}} 
\def\o{{\omega}}
\def\O{{\mathcal{O}}}
\def\T{{\mathcal{T}}}
\def\Z{{\mathbb{Z}}}
\def\Q{{\mathbb{Q}}} 
\def\R{{\mathbb{R}}} 
\def\vp{\varphi}
\def\ol{\overline}
\def\Tr{{\mathrm{Tr}}} 
\def\Pic{{\mathrm{Pic}}}
\def\im{{\mathrm{im}}} 
\def\codim{{\mathrm{codim}}} 
\def\coker{{\mathrm{coker}}}
\def\Proj{{\mathrm{Proj\; }}}
\theoremstyle{plain}
\newtheorem{thm}{Theorem}[section] 
\newtheorem{cor}[thm]{Corollary}
\newtheorem{prop}[thm]{Proposition}
\newtheorem{lem}[thm]{Lemma}
\theoremstyle{definition} 
\newtheorem{defn}[thm]{Definition}
\newtheorem{eg}[thm]{Example} 
\theoremstyle{remark}
\newtheorem{rem}[thm]{Remark}
\newtheorem{obs}[thm]{Observation}
\newtheorem{notation}[thm]{Notation}
\newtheorem*{acknowledgement}{Acknowledgments}
\title{Weak positivity theorem and Frobenius stable canonical rings of geometric generic fibers}
\author{Sho Ejiri}
\address{Graduate School of Mathematical Sciences, the University of Tokyo, 3-8-1 Komaba, Meguro-ku, Tokyo 153-8914, Japan.} 
\email{ejiri@ms.u-tokyo.ac.jp}
\begin{document}
\tolerance = 9999

\maketitle
\markboth{SHO EJIRI}{WEAK POSITIVITY THEOREM} 
\begin{abstract}
In this paper, we prove the weak positivity theorem in positive characteristic when the canonical ring of the geometric generic fiber $F$ is finitely generated and the Frobenius stable canonical ring of $F$ is large enough. As its application, we show the subadditivity of Kodaira dimensions in some new cases. 
\end{abstract}
\setcounter{tocdepth}{1}
\tableofcontents
\section{Introduction} \label{section:intro}
Let $f:X\to Y$ be a separable surjective morphism between smooth projective varieties over an algebraically closed field satisfying $f_*\O_X\cong\O_Y$. The positivity of the direct image sheaf $f_*\o_{X/Y}^m$ of the relative pluricanonical bundle is an important property. In characteristic zero, there are numerous known results. Fujita proved that $f_*\o_{X/Y}$ is a nef vector bundle when $\dim Y=1$ \cite{Fuj78}. Kawamata generalized this to the case when $m\ge2$ \cite{Kaw82} and to the case when $\dim Y\ge2$ \cite{Kaw81} (see also \cite{Fuj14}). Viehweg showed that $f_*\o_{X/Y}^m$ is weakly positive for each $m\ge1$ \cite{Vie83} (see also \cite{Kol87}, \cite{Cam04}, and \cite{Fuj13}). Here weak positivity is a property of coherent sheaves, which can be viewed as a generalization of nefness of vector bundles, and is equivalent to nefness when we restrict ourselves to vector bundles on smooth projective curves. 
There are several significant consequences of these results. One of them is Iitaka's conjecture in some special cases. Iitaka's conjecture states that the subadditivity of Kodaira dimensions $$\kappa(X)\ge\kappa(Y)+\kappa(X_{\ol\eta})$$ holds, where $X_{\ol\eta}$ is the geometric generic fiber of $f$ (note that this conjecture follows from a conjecture in the minimal model program \cite{Kaw85}). 
Other consequences include some moduli problems in \cite{Kol90} and \cite{Fuj12} (see also \cite{Vie95}), where results of \cite{Fuj78}, \cite{Kaw81}, and \cite{Kaw82} are generalized to the case when $X$ is reducible (see also \cite{Kaw11}, \cite{FF14}, and \cite{FFS14}). \par %
%
On the other hand, in positive characteristic, it is known that there are counter-examples to the above results.
 For example, Moret-Bailly constructed a semi-stable fibration $g:S\to \mathbb P^1$ from a surface $S$ to $\mathbb P^1$ such that $g_*\o_{S/{\mathbb P^1}}$ is not nef \cite{MB81}.  For other examples, see \cite{Ray78,Xie10} (or Remark \ref{rem:Raynaud-Xie} in this paper).  Hence it is natural to ask under what additional conditions analogous results hold in positive characteristic.  Koll\'ar showed that $f_*\o_{X/Y}^m$ is a nef vector bundle for each $m\ge2$ when $X$ is a surface, $Y$ is a curve, and the general fiber of $f$ has only nodes as singularities \cite[4.3.~Theorem]{Kol90}.  Patakfalvi proved that $f_*\o_{X/Y}^m$ is a nef vector bundle for each $m\gg0$ when $Y$ is a curve, $X_{\ol\eta}$ has only normal $F$-pure singularities, and $\o_{X/Y}$ is $f$-ample \cite[Theorem~1.1]{Pat14}. \par
In this paper, we consider the weak positivity of $f_*\o_{X/Y}^m$ in positive characteristic under a condition on the canonical ring and the Frobenius stable canonical ring of the geometric generic fiber. Recall that for a Gorenstein variety $V$, the canonical ring of $V$ is the section ring of the dualizing sheaf of $V$, and the Frobenius stable canonical ring of $V$ is its homogeneous ideal whose degree $m$ subgroup is $S^0(V,mK_V)$. $S^0(V,mK_V)$ is the subspace of $H^0(V,mK_V)$ defined by using the trace map of the Frobenius morphism (see Definition \ref{defn:S0} or \cite[\S 4]{Sch14}). These notions are naturally extended to pairs $(V,\Delta)$ consisted of Gorenstein varieties $V$ and effective $\Z_{(p)}$-Cartier divisors $\Delta$ on $V$. We use this setting throughout this paper. \par
From now on we work over an algebraically closed field of characteristic $p>0$. The following theorem is a main result of this paper.
\begin{thm}[\textup{Theorem \ref{thm:main thm}}]\label{thm:main thm intro}\samepage Let $f:X\to Y$ be a separable surjective morphism between smooth projective varieties, let $\Delta$ be an effective $\Q$-divisor on $X$ such that $a\Delta$ is integral for some integer $a>0$ not divisible by $p$, and let $\ol \eta$ be the geometric generic point of $Y$. Assume that \begin{itemize} \item[(i)] the $k(\ol\eta)$-algebra $\bigoplus_{m\ge0}H^0(X_{\ol\eta},m(aK_{X_{\ol\eta}}+(a\Delta)_{\ol\eta}))$ is finitely generated, and \item[(ii)] there exists an integer $m_0>0$ such that for each $m\ge m_0$, $$S^0(X_{\ol\eta},\Delta_{\ol \eta},m(aK_{X_{\ol \eta}}+(a\Delta)_{\ol \eta}))=H^0(X_{\ol\eta},m(aK_{X_{\ol \eta}}+(a\Delta)_{\ol \eta})).$$ \end{itemize} Then $f_*\O_X(am(K_{X/Y}+\Delta))$ is weakly positive for each $m\ge m_0$. \end{thm}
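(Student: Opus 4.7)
The plan is to combine a Frobenius trace argument in the style of Patakfalvi with Viehweg's fibre-product technique. Throughout, fix $m\ge m_0$, set $M:=am(K_{X/Y}+\Delta)$, and write $\mathcal{F}_m:=f_*\mathcal{O}_X(M)$.

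\textbf{Step 1 (Trace map).} Choose $e\ge1$ with $a\mid p^e-1$, so that $(p^e-1)\Delta$ is an integral Weil divisor. Grothendieck duality for the absolute Frobenius $F_X^e$ produces a canonical ``trace'' map
$$
\tau_e:F_*^e\mathcal{O}_X\bigl(p^eM-(p^e-1)(K_X+\Delta)\bigr)\longrightarrow\mathcal{O}_X(M),
$$
and for $e$ large enough its image on $H^0(X_{\bar\eta},\cdot)$ is exactly $S^0(X_{\bar\eta},\Delta_{\bar\eta},M_{\bar\eta})$, which by hypothesis~(ii) equals $H^0(X_{\bar\eta},M_{\bar\eta})$. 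Hence $\tau_e|_{X_{\bar\eta}}$ is surjective on global sections.

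\textbf{Step 2 (Generic surjectivity over $Y$).} Pushing $\tau_e$ forward along $f$ and applying flat base change to the geometric generic point, we obtain a morphism of coherent sheaves on $Y$
$$
f_*\tau_e:F^e_{Y,*}\mathcal{G}_e\longrightarrow\mathcal{F}_m,\qquad \mathcal{G}_e:=f_*\mathcal{O}_X\bigl(p^eM-(p^e-1)(K_X+\Delta)\bigr),
$$
which by Step~1 is surjective on the fibre at the generic point of $Y$, hence over a dense open subset.

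\textbf{Step 3 (Serre vanishing).} Let $\mathcal{L}$ be an ample line bundle on $Y$ and $\alpha\ge1$. A bounded rewriting of $p^eM-(p^e-1)(K_X+\Delta)$ as $M+(p^e-1)(\text{something independent of }e)$, together with Serre vanishing and the projection formula, shows that for $e$ large enough $F^e_{Y,*}\mathcal{G}_e\otimes\mathcal{L}^{\alpha}$ is globally generated. Combined with Step~2, this yields an open subset $U\subseteq Y$ over which $\mathcal{F}_m\otimes\mathcal{L}^{\alpha}$ is generated by global sections.

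\textbf{Step 4 (Symmetric powers via fibre products).} Weak positivity requires generic global generation of $\widehat{S^{n\beta}}(\mathcal{F}_m)\otimes\mathcal{L}^{\beta}$ for every $n$ and some $\beta=\beta(n)$. To produce this, apply Steps~1--3 to the $n$-fold relative fibre product $X^n_Y:=X\times_Y\cdots\times_Y X$ with boundary $\Delta^{(n)}:=\sum_i\mathrm{pr}_i^*\Delta$ and morphism $f^{(n)}:X^n_Y\to Y$. Hypotheses~(i) and (ii) propagate to $X_{\bar\eta}^n$ via a K\"unneth-type argument for $S^0$ of smooth Gorenstein products. The generic fibre of $f^{(n)}_*\mathcal{O}_{X^n_Y}\bigl(am(K_{X^n_Y/Y}+\Delta^{(n)})\bigr)$ is the $n$-fold tensor power of $(\mathcal{F}_m)_{\bar\eta}$, which surjects onto $S^n(\mathcal{F}_m)_{\bar\eta}$; hence generic global generation obtained on $X^n_Y$ descends to the symmetric power of $\mathcal{F}_m$.

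\textbf{Main obstacle.} The technical heart is Step~4: one must verify that the condition $S^0=H^0$ is stable under $n$-fold relative fibre products over $Y$ (a K\"unneth formula for Frobenius-stable sections of smooth Gorenstein products in characteristic~$p$), and that the trace construction of Step~1 can be carried out uniformly on the possibly singular $X^n_Y$ together with the boundary $\Delta^{(n)}$. The machinery of Steps~1--3 is by now standard in positive-characteristic birational geometry, but ensuring that these ingredients combine coherently across all $n$ to yield the symmetric-power generic global generation required by the definition of weak positivity is where the real work lies.
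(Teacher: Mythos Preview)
Your Step~3 hides a genuine circularity that breaks the argument for $am>1$. Writing $p^eM-(p^e-1)(K_X+\Delta)=M+(p^e-1)D$ with $D=(am-1)(K_{X/Y}+\Delta)-f^*K_Y$, you see that $\mathcal{G}_e=f_*\mathcal{O}_X\bigl(M+(p^e-1)D\bigr)$ involves the pushforward of an unbounded multiple of $K_{X/Y}+\Delta$. Serre or Fujita vanishing cannot make $F^e_{Y,*}\mathcal{G}_e\otimes\mathcal{L}^{\alpha}$ globally generated uniformly in $e$ unless you already know some positivity for these growing pushforwards --- which is exactly the conclusion you are after. The $m=1$, $\Delta=0$ case (Patakfalvi's theorem) is special precisely because then $D=-f^*K_Y$ is pulled back from $Y$ and $\mathcal{G}_e=\mathcal{F}_1\otimes\omega_Y^{1-p^e}$, so the Frobenius trick closes up; for $am\ge2$ this does not happen.

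The paper circumvents this with a different mechanism and does not use fibre products at all. It introduces a numerical invariant $t(m):=t(Y,\mathcal{F}_m,H)$ (Definition~\ref{defn:inv}) and shows two inequalities: the relative Frobenius trace, together with hypothesis~(ii), gives $t\bigl(mp^{de}-a^{-1}(p^{de}-1)\bigr)\le p^{de}\,t(m)$; and hypothesis~(i) makes the multiplication map $\mathcal{F}_l\otimes\mathcal{F}_n\to\mathcal{F}_{l+n}$ generically surjective for suitable $l$ and all $n\ge n_0$, yielding $t(l)+t(n)\le t(l+n)$. An elementary division-with-remainder argument then forces first $t(l)\ge0$ and then $t(m)\ge0$ for every $m\ge m_0$, after which Proposition~\ref{prop:t and wp} gives weak positivity directly. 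Notice that the finite generation hypothesis~(i) is used essentially here, via the algebra structure on $\bigoplus_m\mathcal{F}_m$, whereas in your sketch it plays no real role --- a sign that something is missing. Your Step~4 difficulties (singularities of $X^n_Y$, K\"unneth for $S^0$) are therefore moot: the paper replaces the fibre-product/symmetric-power route by this recursion on the invariant $t$.
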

Condition (ii) holds, for example, in the case where $X_{\ol\eta}$ is a curve of arithmetic genus at least two which has only nodes as singularities, $\Delta=0$, and $m_0=2$ (Corollary \ref{cor:rs for F-pure curve}), or in the case where the pair $(X_{\ol\eta},\Delta_{\ol\eta})$ has only $F$-pure singularities, $K_{X_{\ol\eta}}+\Delta_{\ol\eta}$ is ample, and $m_0\gg0$ (Example \ref{eg:rs for ample div}). Thus Theorem \ref{thm:main thm intro} is a generalization of \cite[4.3.~Theorem]{Kol90} and \cite[Theorem~1.1]{Pat14}. \par 
Theorem \ref{thm:main thm intro} should be compared with another result of Patakfalvi \cite[Theorem~6.4]{Pat13}, which states that if $S^0(X_{\ol\eta},K_{X_{\ol\eta}})=H^0(X_{\ol\eta},K_{X_{\ol\eta}})$ then $f_*\o_{X/Y}$ is weakly positive (see also \cite{Jan08}). These two results imply that $S^0(X_{\ol\eta},mK_{X_{\ol\eta}})$ is closely related to the positivity of $f_*\o_{X/Y}^m$ for each $m\ge1$. In order to prove Theorem \ref{thm:main thm intro}, we generalize the method of the proof of \cite[Theorem~6.4]{Pat13} using a numerical invariant introduced in Section \ref{section:inv}. \par
When the relative dimension of $f$ is one, we obtain the following theorem as a corollary of Theorem \ref{thm:main thm intro}. 
\begin{thm}[\textup{Corollary \ref{cor:1 dim}}]\label{thm:1 dim intro}\samepage Let $f:X\to Y$ be a separable surjective morphism of relative dimension one between smooth projective varieties satisfying $f_*\O_X\cong \O_Y$, let $\Delta$ be an effective $\Q$-divisor on $X$ such that $a\Delta$ is integral for some integer $a>0$ not divisible by $p$, and let $\ol\eta$ be the geometric generic point of $Y$. If $(X_{\ol\eta},\Delta_{\ol\eta})$ is $F$-pure and $K_{X_{\ol\eta}}+\Delta_{\ol\eta}$ is ample, then $f_*\O_X(am(K_{X/Y}+\Delta))$ is weakly positive for each $m\ge2$. In particular, if $X_{\ol\eta}$ is smooth curve of genus at least two, then $f_*\o_{X/Y}^m$ is weakly positive for each $m\ge2$. \end{thm}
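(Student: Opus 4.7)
The plan is to apply Theorem \ref{thm:main thm intro} with $m_0 = 2$, after verifying its two hypotheses in the present curve setting.

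\smallskip

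\emph{Verification of (i).} Since $\dim X_{\ol\eta} = 1$ and $K_{X_{\ol\eta}} + \Delta_{\ol\eta}$ is ample by hypothesis, the integral divisor $aK_{X_{\ol\eta}} + (a\Delta)_{\ol\eta} = a(K_{X_{\ol\eta}} + \Delta_{\ol\eta})$ is also ample. On a projective curve, the section ring of an ample divisor is always a finitely generated graded algebra (pick a large multiple to get very ampleness, then reduce to a polynomial quotient), so (i) holds.

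\emph{Verification of (ii).} For each $m \ge 2$ one computes
\[
m(aK_{X_{\ol\eta}} + (a\Delta)_{\ol\eta}) - (K_{X_{\ol\eta}} + \Delta_{\ol\eta}) = (ma - 1)(K_{X_{\ol\eta}} + \Delta_{\ol\eta}),
\]
which is ample because $ma - 1 \ge 1$. Combined with the $F$-purity of $(X_{\ol\eta}, \Delta_{\ol\eta})$, this is exactly the input required by the paper's earlier result on $F$-pure curves (Corollary \ref{cor:rs for F-pure curve}, or the corresponding statement for $\Q$-divisor pairs), so I would invoke it to obtain
\[
S^0\bigl(X_{\ol\eta}, \Delta_{\ol\eta}, m(aK_{X_{\ol\eta}} + (a\Delta)_{\ol\eta})\bigr) = H^0\bigl(X_{\ol\eta}, m(aK_{X_{\ol\eta}} + (a\Delta)_{\ol\eta})\bigr).
\]

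\emph{Conclusion.} Both hypotheses of Theorem \ref{thm:main thm intro} now hold with $m_0 = 2$, and the weak positivity of $f_*\O_X(am(K_{X/Y}+\Delta))$ for $m \ge 2$ follows immediately. For the ``in particular'' statement, take $a = 1$ and $\Delta = 0$: a smooth curve of genus $g \ge 2$ is trivially $F$-pure, and $\deg K_{X_{\ol\eta}} = 2g - 2 \ge 2$, so $K_{X_{\ol\eta}}$ is ample and the main assertion applies.

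\smallskip

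The main obstacle is obtaining the \emph{sharp} threshold $m_0 = 2$ in step (ii), rather than merely $m_0 \gg 0$ as granted by Example \ref{eg:rs for ample div} in higher dimensions. The saving feature on curves is that the Frobenius trace map, surjective on the sheaf level under the $F$-pure assumption, propagates to an $H^0$-surjection as soon as the $H^1$ of the appropriate Frobenius twist vanishes; on a curve, such vanishing is already guaranteed by the ample computation $(ma - 1)(K_{X_{\ol\eta}} + \Delta_{\ol\eta})$ for every $m \ge 2$, without needing to pass to arbitrarily large multiples. All the technical work of the paper is hidden in Theorem \ref{thm:main thm intro} itself; the present corollary is essentially an existence check of its hypotheses.
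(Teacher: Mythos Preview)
Your proposal is correct and follows essentially the same approach as the paper: verify hypotheses (i) and (ii) of the main theorem using Corollary~\ref{cor:rs for F-pure curve}, then invoke Theorem~\ref{thm:main thm intro}. The paper's own proof of Corollary~\ref{cor:1 dim} additionally spends a paragraph checking that $X_{\ol\eta}$ is Gorenstein and $(a\Delta)_{\ol\eta}$ is Cartier, but that is only needed in its more general normal-variety setting; in the smooth setting of the introduction this is automatic, so your omission is harmless.
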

Moreover, in the case where $f:X\to Y$ is a semi-stable fibration from a surface to a curve, we discuss the ampleness of $f_*\o_{X/Y}^m$ for each $m\ge2$ and the nefness of $f_*\o_{X/Y}$ (Theorems \ref{thm:ample} and \ref{thm:criterion}). \par 
When the relative dimension of $f$ is two, we also obtain the following theorem as a corollary of Theorem \ref{thm:main thm intro}. 
\begin{thm}[\textup{Corollary \ref{cor:2 dim}}]\label{thm:2 dim intro}\samepage
Let $f:X\to Y$ be a separable surjective morphism of relative dimension two between smooth projective varieties satisfying $f_*\O_X\cong\O_Y$. If the geometric generic fiber is a smooth surface of general type and $p\ge 7$, then $f_*\o_{X/Y}^m$ is weakly positive for each $m\gg0$. \end{thm}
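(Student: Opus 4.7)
The plan is to derive this corollary from Theorem \ref{thm:main thm intro} applied with $\Delta=0$ and $a=1$, so I must verify hypotheses (i) and (ii) for the geometric generic fiber $F:=X_{\ol\eta}$, a smooth projective surface of general type over an algebraically closed field of characteristic $p\ge 7$. Both checks will be effected by passing to the canonical model of $F$.

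For (i) I use that surfaces of general type admit a canonical model in positive characteristic (Bombieri--Mumford): contracting the $(-2)$-curves on $F$ yields a birational morphism $\pi\colon F\to Z$ onto a normal projective Gorenstein surface $Z$ with only Du Val singularities, with $K_Z$ ample and $K_F=\pi^{*}K_Z$. In particular the canonical ring of $F$ coincides with $\bigoplus_{m}H^0(Z,mK_Z)$, which is finitely generated because $K_Z$ is ample.

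For (ii), the key input is the classical fact that every Du Val singularity is $F$-regular (hence $F$-pure) in characteristic $p\ge 7$; the cut-off $7$ is forced by the $E_8$ singularity. Thus $(Z,0)$ has $F$-pure singularities and ample canonical class, so Example \ref{eg:rs for ample div} yields $S^0(Z,mK_Z)=H^0(Z,mK_Z)$ for all $m\gg 0$. To transfer this equality to $F$, I exploit that $\pi$ is a crepant resolution: from $\pi_{*}\mathcal{O}_F=\mathcal{O}_Z$ and $\mathcal{O}_F(nK_F)\cong\pi^{*}\mathcal{O}_Z(nK_Z)$ for each $n\ge 0$, the projection formula gives $\pi_{*}\mathcal{O}_F(nK_F)=\mathcal{O}_Z(nK_Z)$. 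Applying this to $n=m$ and to $n=p^{e}(m-1)+1$, and using functoriality of Grothendieck duality to identify the iterated Frobenius trace maps on $F$ and on $Z$, I obtain $H^0(F,mK_F)=H^0(Z,mK_Z)$ and $S^0(F,mK_F)=S^0(Z,mK_Z)$, which together with the preceding paragraph establish (ii).

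The main obstacle is this last transfer step: verifying that the Frobenius-trace description of $S^0$ is preserved under the crepant resolution $\pi$. Concretely, one needs compatibility of the trace maps $F^{e}_{*}\omega_{(-)}^{p^{e}(m-1)+1}\to\omega_{(-)}^{m}$ with $\pi_{*}$, which reduces via Grothendieck duality to the crepantness of $\pi$ together with the Grauert--Riemenschneider-type vanishing $R^{i}\pi_{*}\omega_F=0$ for $i>0$; the latter is available here because the Du Val singularities of $Z$ are rational in characteristic $p\ge 7$. Once (i) and (ii) are in hand, Theorem \ref{thm:main thm intro} applied with $\Delta=0$ and $a=1$ produces the weak positivity of $f_{*}\o_{X/Y}^{m}$ for all $m\gg 0$, as desired.
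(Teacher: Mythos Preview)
Your approach is essentially correct and parallels the paper's, but with two points worth noting.

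First, a minor gap: you write that contracting the $(-2)$-curves on $F$ gives the canonical model with $K_F=\pi^{*}K_Z$, but this is only true if $F$ is already minimal. The hypothesis only says $F$ is smooth of general type, so $F$ may contain $(-1)$-curves, in which case $K_F=\pi^{*}K_Z+E$ with $E$ a nonzero effective exceptional divisor and your crepant-resolution transfer breaks down as stated. This is easily repaired by first passing to the minimal model (the $S^0$'s agree by the same style of argument, or by the birational-invariance statement Corollary~\ref{cor:birat inv}), but it should be said.

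Second, the route differs from the paper's. The paper packages the transfer of $S^0$ from $F$ to its canonical model inside the general canonical-bundle-formula machinery (Theorem~\ref{thm:quasi can bdl formula}, via Corollaries~\ref{cor:Iitaka fibrat} and~\ref{cor:birat inv}); this is what Corollary~\ref{cor:surf of gen type} invokes. You instead argue the transfer directly: push forward the Frobenius trace along the crepant resolution using $\pi_*\omega_F\cong\omega_Z$ (rationality of Du Val singularities) and functoriality of duality. Your route is more elementary and self-contained for this particular application, avoiding the full apparatus of Theorem~\ref{thm:quasi can bdl formula}; the paper's route is more systematic and handles in one stroke the non-crepant case, the non-birational (semi-ample) case, and pairs $(X,\Delta)$. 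Both land on Example~\ref{eg:rs for ample div} for the $F$-pure ample model, and both feed into Theorem~\ref{thm:main thm} to finish.
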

Similarly to the case of characteristic zero, we can use Theorem \ref{thm:main thm intro} to study Iitaka's conjecture. Before stating the next theorem, we recall the definition of Iitaka-Kodaira dimension. Let $D$ be a Cartier divisor on a projective variety $V$ and let $m>0$ be an integer divisible enough. The Iitaka-Kodaira dimension $\kappa(V,D)$ of $D$ is the dimension of the image of $V$ under the rational map determined by a linear system $|mD|$ if it is not empty, otherwise $\kappa(V,D)=-\infty$. This definition is naturally generalized to the case where $D$ is a $\Z_{(p)}$-Cartier (or $\Q$-Cartier) divisor. When $V$ is smooth, the Kodaira dimension $\kappa(V)$ of $V$ is defined as $\kappa(V,K_V)$. \par
\begin{thm}[\textup{Theorems \ref{thm:iitaka conj Y gen} and \ref{thm:iitaka conj Y ell}}]\label{thm:iitaka conj intro}\samepage Let $f:X\to Y$ be a separable surjective morphism between smooth projective varieties satisfying $f_*\O_X\cong\O_Y$, let $\Delta$ be an effective $\Q$-divisor on $X$ such that $a\Delta$ is integral for some integer $a>0$ not divisible by $p$, and let $\ol\eta$ be the geometric generic point of $Y$. Assume that \begin{itemize} \item[(i)] the $k(\ol\eta)$-algebra $\bigoplus_{m\ge0}H^0(X_{\ol\eta},m(aK_{X_{\ol\eta}}+(a\Delta)_{\ol\eta}))$ is finitely generated, \item[(ii)] there exists an integer $m_0\ge0$ such that $$S^0(X_{\ol\eta},\Delta_{\ol\eta},m(aK_{X_{\ol\eta}}+(a\Delta)_{\ol\eta}))=H^0(X_{\ol\eta},m(aK_{X_{\ol\eta}}+(a\Delta)_{\ol\eta}))$$ for each $m\ge m_0$, and \item[(iii)] either that $Y$ is of general type or $Y$ is an elliptic curve. \end{itemize} Then $$\kappa(X,K_X+\Delta)\ge\kappa(Y)+\kappa(X_{\ol\eta},K_{X_{\ol\eta}}+\Delta_{\ol\eta}).$$ \end{thm}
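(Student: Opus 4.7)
The whole argument is driven by Theorem \ref{thm:main thm intro}, which under hypotheses (i) and (ii) yields weak positivity of $\mc F_m:=f_*\O_X(am(K_{X/Y}+\Delta))$ on $Y$ for every $m\ge m_0$. Using the identity $K_X=K_{X/Y}+f^*K_Y$ and the projection formula,
\[
\mc E_m := f_*\O_X\bigl(am(K_X+\Delta)\bigr) \;\cong\; \mc F_m\otimes \O_Y(amK_Y).
\]
Flat base change together with (i) shows that the rank $r_m$ of $\mc F_m$ equals $h^0\bigl(X_{\ol\eta}, am(K_{X_{\ol\eta}}+\Delta_{\ol\eta})\bigr)$, and for $m$ sufficiently divisible one has $r_m\ge C\,m^{\kappa_F}$ for a positive constant $C$, where $\kappa_F:=\kappa(X_{\ol\eta},K_{X_{\ol\eta}}+\Delta_{\ol\eta})$. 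Since $h^0\bigl(X,am(K_X+\Delta)\bigr)=h^0(Y,\mc E_m)$, proving the Iitaka inequality reduces to producing roughly $m^{\kappa(Y)+\kappa_F}$ global sections of $\mc E_m$ along a cofinal sequence of $m$.

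In the general-type case $K_Y$ is big and $\kappa(Y)=\dim Y=:g$. Here I would invoke the standard Viehweg-type mechanism that converts ``weakly positive $\otimes$ big'' into many global sections. Writing $amK_Y\sim A+B$ with $A$ ample and $B$ effective for $m$ sufficiently divisible, weak positivity of $\mc F_m$ supplies a large integer $\b$ for which $S^{\b}(\mc F_m)\otimes \O_Y(\b A)$ is generically globally generated; multiplying by sections of $\b B$ gives a generically surjective map
\[
\bigl(S^{\b}(\mc F_m)\otimes \O_Y(\b A)\bigr)\otimes H^0(Y,\b B) \;\longrightarrow\; S^{\b}\mc E_m.
\]
A routine Riemann--Roch/bigness count on $Y$ combined with $r_m\ge C\,m^{\kappa_F}$ produces $h^0\bigl(Y,S^{\b}\mc E_m\bigr)\gtrsim r_m^{\b}\cdot \b^{g}$, and letting $m$ and $\b$ grow yields $\kappa(X,K_X+\Delta)\ge g+\kappa_F=\kappa(Y)+\kappa_F$.

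For the elliptic-curve case one has $K_Y\sim 0$ and $\kappa(Y)=0$, so $\mc E_m=\mc F_m$ is a nef vector bundle on an elliptic curve and the target is $h^0(Y,\mc E_m)\gtrsim m^{\kappa_F}$. The natural tool is the multiplication-by-$n$ isogeny $[n]:Y\to Y$ with $(n,p)=1$: pullback along $[n]$ preserves nefness, rescales the degree of every subsheaf by $n^2$, and sections upstairs descend to $\mc E_m$ by taking $[n]$-invariants. The plan is to choose $n$ large, replace $f$ by its base change along $[n]$, and then exploit the Harder--Narasimhan decomposition of $[n]^*\mc E_m$ into semistable nef pieces to extract $\Theta(r_m)$ independent sections. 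The main obstacle is precisely this last step: in positive characteristic, nef bundles on elliptic curves need not be numerically flat after Frobenius pullback and HN-semistable degree-$0$ pieces can have vanishing $H^0$, so the bare nefness of $\mc E_m$ is not enough. This is where hypothesis (ii) must be re-used in a subtler way than in the general-type argument, typically by combining weak positivity of $\mc F_m$ for several values of $m$ (or by a Frobenius-stability argument on the relative setting) to guarantee that on $[n]^*\mc E_m$ the degree of at least one HN piece grows linearly with $m$, converting the rank growth into section growth.
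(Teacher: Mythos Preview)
For the general-type case your Viehweg-style approach can be completed, but as written it conflates $h^0(Y,S^\beta\mc E_m)$ with $h^0(Y,\mc E_{m\beta})$; only the latter computes $\kappa(X,K_X+\Delta)$, and the passage requires the multiplication map $S^\beta\mc F_m\to\mc F_{m\beta}$ (generically surjective by (i) for $m$ divisible enough) followed by an embedding $\O_Y(\beta B)^{r_{m\beta}}\hookrightarrow\mc E_{m\beta}$ with $B$ chosen big. The paper takes a different and cleaner route that avoids symmetric powers entirely: it works with the threshold sets $S'=\{\e\in\Q:\kappa(X,K_{X/Y}+\Delta-\e f^*H)\ge\kappa(Y)+\kappa_F\}$ and $S=\{\e:\kappa(X,K_{X/Y}+\Delta-\e f^*H)\ge0\}$, shows $\sup S=\sup S'$ by an elementary convexity argument, and proves $\sup S\ge0$ directly from $t(Y,\mc F_m,H)\ge0$ combined with the injection ${F_Y^{de}}^*\mc F_m\hookrightarrow f_*\O_X(amp^{de}(K_{X/Y}+\Delta))$ coming from the relative Frobenius. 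Bigness of $K_Y$ then furnishes an $\e\in S'$ with $K_Y-\e H$ $\Q$-effective, and the inequality follows.

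The elliptic-curve case has a genuine gap. You correctly diagnose that nefness of $\mc F_m$ alone is insufficient, but the mechanism you suggest---forcing some HN piece to have slope growing with $m$---is not what works, and your sketch stops short of an argument. The paper's idea is quite different. After a finite base change, Atiyah--Oda theory splits each $\mc F_m$ as an ample globally generated bundle plus $\bigoplus_{\L\in S_m}\L$ with $S_m\subset\Pic^0(Y)$, and the decisive step is to prove that the subgroup of $\Pic^0(Y)$ generated by $S_l$ (for a suitable fixed $l$) is \emph{finite}. This is done by re-running the generically surjective chain $\mc F_l^{\otimes q_e}\otimes\mc F_{n_0+r_e}\to{F_Y^{de}}^*\mc F_l$ from the proof of Theorem~\ref{thm:main thm} and projecting onto each degree-zero summand $\L_i^{p^{de}}$: every such power is then expressed as a product of the $\L_j$ of bounded word length (roughly $q_{l,e}$) times an element of a fixed finite set, and since $p^{de}$ eventually exceeds this bound the group is finite. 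A further multiplication-by-$n$ base change trivializes all of $S_l$, making $\mc F_l$ globally generated; the multiplication maps supplied by (i) then render $\mc F_{bl}$ generically globally generated for all $b\gg0$, whence $h^0(Y,\mc F_{bl})\ge r_{bl}$, which is exactly the required growth. This finiteness argument is the heart of the elliptic case and is absent from your proposal.
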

As a special case of Theorem \ref{thm:iitaka conj intro}, we obtain the following result.
\begin{thm}[\textup{Corollary \ref{cor:iitaka conj 3 1}}]\label{thm:iitaka conj 3 1}\samepage Let $f:X\to Y$ be a separable surjective morphism from a smooth projective variety $X$ of dimension three to a smooth projective curve $Y$ satisfying $f_*\O_X\cong\O_Y$. If the geometric generic fiber $X_{\ol\eta}$ is a smooth projective surface of general type and $p\ge7$, then $$\kappa(X)\ge\kappa(Y)+\kappa(X_{\ol\eta}).$$ \end{thm}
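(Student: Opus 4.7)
The plan is to apply Theorem \ref{thm:iitaka conj intro} to $f$ with $\Delta=0$ and $a=1$, after verifying the three hypotheses. Hypothesis (iii) is essentially free: since $Y$ is a curve, it is either rational, elliptic, or of general type. In the rational case $\kappa(Y)=-\infty$ and the desired inequality is vacuous, so one may assume $g(Y)\ge 1$, which gives (iii).

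For (i), I would invoke classical surface theory: the canonical ring of a smooth projective surface of general type is finitely generated in arbitrary characteristic, via Mumford/Zariski's construction of the canonical model by contracting $K$-trivial $(-2)$-curves. Set
$Z:=\Proj\bigoplus_{m\ge 0}H^0(X_{\ol\eta},mK_{X_{\ol\eta}})$.
Then $Z$ is a normal projective Gorenstein surface with at worst Du Val singularities, $K_Z$ is ample, and the induced morphism $\pi:X_{\ol\eta}\to Z$ is crepant ($K_{X_{\ol\eta}}=\pi^*K_Z$).

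The hypothesis $p\ge 7$ is tailored to (ii): by Hara's classification of strongly $F$-regular rational double points, every Du Val singularity is strongly $F$-regular in this characteristic range, so $(Z,0)$ is $F$-pure with $K_Z$ ample. Example \ref{eg:rs for ample div} then yields $S^0(Z,mK_Z)=H^0(Z,mK_Z)$ for all $m\gg 0$. To transfer this equality along $\pi$, crepancy together with $\pi_*\O_{X_{\ol\eta}}=\O_Z$ gives $\pi_*\o_{X_{\ol\eta}}^m\cong\o_Z^{[m]}$, identifying the two spaces of sections, while the Grothendieck trace of $F^e$ commutes with the proper pushforward $\pi_*$, so the subspaces $S^0$ coincide under this identification. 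Hence $S^0(X_{\ol\eta},mK_{X_{\ol\eta}})=H^0(X_{\ol\eta},mK_{X_{\ol\eta}})$ for all $m\gg 0$, verifying (ii).

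With (i)--(iii) in hand, Theorem \ref{thm:iitaka conj intro} applied to $f$ delivers $\kappa(X)\ge\kappa(Y)+\kappa(X_{\ol\eta})$. The most delicate point in the plan is the transfer of $S^0=H^0$ from $Z$ back to $X_{\ol\eta}$, which rests on the compatibility of Frobenius trace with proper pushforward onto a Gorenstein target with $F$-pure singularities; this should be standard but deserves explicit justification in the write-up. It is also precisely here that the bound $p\ge 7$ is essential, since without it the $E_7$ and $E_8$ Du Val singularities on $Z$ need not be $F$-regular, so Example \ref{eg:rs for ample div} could fail to apply.
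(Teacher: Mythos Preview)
Your proposal is correct and follows essentially the same route as the paper. The paper likewise reduces to the elliptic/general type base (Theorems \ref{thm:iitaka conj Y gen} and \ref{thm:iitaka conj Y ell}), invokes finite generation of the canonical ring of a surface of general type for (i), and for (ii) passes to the canonical model $Z$, uses that Du Val singularities are $F$-pure when $p\ge 7$ (via Artin's classification and Fedder's criterion rather than Hara), and applies Example \ref{eg:rs for ample div}. The transfer step you flag as delicate is precisely what the paper isolates as Corollary \ref{cor:birat mor} (a special case of Corollary \ref{cor:Iitaka fibrat}, both proved via the canonical bundle formula Theorem \ref{thm:quasi can bdl formula}), which gives $S^0(X_{\ol\eta},mK_{X_{\ol\eta}})\cong S^0(Z,mK_Z)$ for all $m>0$ when $Z$ has canonical Gorenstein singularities; so your intuition that this compatibility ``should be standard but deserves explicit justification'' is exactly right, and the paper's machinery supplies it.
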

\subsection{Notation} \label{subsection:notat}
In this paper, we fix an algebraically closed field $k$ of characteristic $p>0$. A {\it $k$-scheme} is a separated scheme of finite type over $k$. A {\it variety} means an integral $k$-scheme and a {\it curve} (resp. {\it surface}) means a variety of dimension one (resp. two). A projective surjective morphism $f:X\to Y$ between varieties is called a {\it fibration} or a {\it algebraic fiber space} if it is separable and it satisfies $f_*\O_X\cong\O_Y$. \par
We fix the following notation: \begin{itemize} \item Let ${\rm CDiv}(S)$ be the group of the Cartier divisors on a scheme $S$. An $\Z_{(p)}$-Cartier (resp. $\Q$-Cartier) divisor on $S$ is an element of ${\rm CDiv}(S)\otimes_{\Z}\Z_{(p)}$ (resp. ${\rm CDiv}(S)\otimes_\Z\Q$), where $\Z_{(p)}$ is the localization of $\Z$ at the prime ideal $(p)=p\Z$; 
\item For a rational number $\delta$, we denote its integral (resp. fractional) part by $\lfloor\delta\rfloor$ (resp. $\{\delta\}$). For a $\Q$-divisor $\Delta=\sum_{i} \delta_i\Delta_i$ on a normal variety, we define $\lfloor\Delta\rfloor:=\sum_i \lfloor\delta_i\rfloor\Delta_i$ (resp. $\{\Delta\}:=\sum_i\{\delta_i\}\Delta_i$); 
\item Let $\varphi:S\to T$ be a morphism of schemes and let $T'$ be a $T$-scheme. Then we denote the second projection $S_{T'}:=S\times_{T}T'\to T'$ by $\varphi_{T'}$. For a Cartier (or $\Z_{(p)}$-Cartier, $\Q$-Cartier) divisor $D$ on $S$, the pullback of $D$ to $S_{T'}$ is denoted by $D_{T'}$ if it is well-defined. Similarly, for an $\O_S$-module homomorphism $\a:\F\to\G$, the pullback of $\alpha$ to $S_{T'}$ is denoted by $\a_{T'}:\F_{T'}\to \G_{T'}$; 
\item For a scheme $X$ of positive characteristic, $F_X:X\to X$ is the absolute Frobenius morphism. We often denote the source of $F_X^e$ by $X^{e}$. Let $f:X\to Y$ be a morphism between schemes of positive characteristic. We denote the same morphism by $f^{(e)}:X^e\to Y^e$ when we regard $X$ (resp. $Y$) as $X^e$ (resp. $Y^e$). We define the $e$-th relative Frobenius morphism of $f$ to be the morphism $F^{(e)}_{X/Y}:=(F_X^e,f^{(e)}):X^e\to X\times_Y Y^e=:X_{Y^e}$.  \end{itemize}
\begin{small}
\begin{acknowledgement}
The author wishes to express his gratitude to his supervisor Professor Shunsuke Takagi for suggesting the problems in this paper, for answering many questions, and for much helpful advice. 
He is grateful to Professors Yifei Chen, Yoshinori Gongyo and Zsolt Patakfalvi for valuable comments and discussions. 
He would like to thank Professor Akiyoshi Sannai and Doctors Takeru Fukuoka, Kenta Sato, and Fumiaki Suzuki for useful comments. 
He also wishes to thank the referee for careful reading and valuable suggestions.
He was supported by JSPS KAKENHI Grant Number 15J09117 and the Program for Leading Graduate Schools, MEXT, Japan.
\end{acknowledgement}
\end{small}
\section{Preliminaries} \label{section:prelim}
\subsection{AC divisors}\label{subsection:ACdiv}
Let $X$ be a $k$-scheme of pure dimension satisfying $S_2$ and $G_1$. An \textit{AC divisor} (or \textit{almost Cartier divisor}) on $X$ is a coherent $\O_X$-submodule of the sheaf of total quotient ring $K(X)$ which is invertible in codimension one (see \cite{Kol+92}, \cite{Har94}, or \cite{MS12}). For any AC divisor $D$ we denote the coherent sheaf defining $D$ by $\O_X(D)$. 
The set of AC divisors $\textrm{WSh}(X)$ has the structure of additive group \cite[Corollary~2.6]{Har94}. 
A $\Z_{(p)}$-\textit{AC divisor} is an element of $\textrm{WSh}(X)\otimes_\Z\Z_{(p)}$. 
An AC divisor $D$ is said to be \textit{effective} if $\O_X\subseteq\O_X(D)$, and a $\Z_{(p)}$-AC divisor $\Delta$ is said to be \textit{effective} if $\Delta=D\otimes r$ for some effective AC divisor $D$ and some $0\le r\in\Z_{(p)}$. 
Now we have the following diagram:
$$\xymatrix@R=25pt@C=25pt{ \textrm{WSh}(X) \ar[r]^(0.4){(\underline{~~})\otimes1} & \textrm{WSh}(X)\otimes_{\Z}\Z_{(p)} \\ \textrm{CDiv}(X) \ar[r]^(0.4){(\underline{~~})\otimes1} \ar@{^{(}-{>}}[u] & \textrm{CDiv}(X)\otimes_{\Z}\Z_{(p)} \ar@{^{(}-{>}}[u]}$$ 
Note that the horizontal homomorphisms are not necessarily injective \cite[Page 172]{Kol+92}.
\textbf{Throughout this paper, given an effective $\mathbb Z_{(p)}$-AC (resp. $\mathbb Z_{(p)}$-Cartier) divisor $\Delta$, we fix an effective AC (resp. Cartier) divisor $E$ and an integer $a>0$ not divisible by $p$ such that $E\otimes1=a\Delta$}. 
The choice of $E$ and $a$ is often represented by $\Delta=E/a$.
For every integer $m$, we regard the $\mathbb Z_{(p)}$-AC divisor $am\Delta$ as the AC divisor $mE$. 
For instance, the symbol $\O_X(am(D+\Delta))$ denotes the sheaf $\O_X(amD+mE)$, for every AC divisor $D$.\par
We note that if $X$ is a normal variety, then AC divisors are Weil divisors, and the horizontal homomorphisms in the above diagram is injective. In this case we can choose $E$ and $a$ canonically for an effective $\Z_{(p)}$-divisors $\Delta$: $a$ is the smallest positive integer such that $a\Delta$ is integral and $E:=a\Delta$. \par
We define notions similar to the above by using $\Q$ instead of $\Z_{(p)}$.
\subsection{Trace of Frobenius morphisms}\label{subsection:trace}
In this subsection we introduce notations related to trace maps of Frobenius morphisms. \par
Let $\pi:X\to Y$ be a finite surjective morphism between Gorenstein $k$-schemes of pure dimension, and let $\o_X$ and $\o_Y$ be dualizing sheaves of $X$ and $Y$ respectively. We denote by $\Tr_{\pi}:\pi_*\o_X\to\o_Y$ the morphism obtained by applying the functor $\ms Hom_{\O_Y}(\underline{\quad},\o_Y)$ to the natural morphism $\pi^{\#}:\O_Y\to\pi_*\O_X$. This is called the trace map of $\pi$.  Then we define 
\begin{align*} 
\phi^{(1)}_{X}&:=\Tr_{F_X}\otimes\O_X(-K_X):{F_X}_*\O_X((1-p)K_X)\to\O_X,\quad\textup{and} \\ 
\phi^{(e+1)}_{X}&:=\phi^{(e)}_{X}\circ {F_X^{e}}_*(\phi^{(1)}_{X}\otimes\O_X((1-p^{e})K_X)):{F_X^{e+1}}_*\O_X((1-p^{e+1})K_X)\to\O_X 
\end{align*} 
for each $e>0$, where $K_X$ is a Cartier divisor satisfying $\O_X(K_X)\cong\o_X$. \par
Let $X$ be a Gorenstein $k$-scheme of pure dimension. Let $\Delta=E/a$ be an effective $\Z_{(p)}$-Cartier divisor on $X$ and $d>0$ be the smallest integer satisfying $a|(p^d-1)$.  For each $e>0$ we define 
\begin{align*} 
\L^{(de)}_{(X,\Delta)}&:=\O_X((1-p^{de})(K_X+\Delta))\subseteq\O_X((1-p^{de})K_X),\\ 
\phi^{(d)}_{(X,\Delta)}&:{F_X^{d}}_*\L^{(d)}_{(X,\Delta)}\to {F_X^{d}}_*\O_X((1-p^{d})K_X)\xrightarrow{\phi^{(d)}_{X}}\O_X,\quad \textup{and} \\
\phi^{(d(e+1))}_{(X,\Delta)}&:=\phi^{(de)}_{(X,\Delta)}\circ{F_X^{de}}_*(\phi^{(d)}_{(X,\Delta)}\otimes\L^{(de)}_{(X,\Delta)}):{F_X^{d(e+1)}}_*\L^{(d(e+1))}_{(X,\Delta)}\to\O_X. 
\end{align*}
\indent Let $X$ be a $k$-scheme of pure dimension satisfying $S_2$ and $G_1$. Let $\Delta=E/a$ be a $\Z_{(p)}$-AC divisor on $X$ and $d>0$ be the smallest integer satisfying $a|(p^d-1)$. Let $\iota:U\hookrightarrow X$ be a Gorenstein open subset of $X$ such that $\codim X\setminus U\ge2$ and that $E|_U$ is Cartier. Set $\Delta|_U=E|_U/a$. Then for each $e>0$ we define 
\begin{align*} 
\L^{(de)}_{(X,\Delta)}:=\iota_*\L^{(de)}_{(U,\Delta|_U)}\quad\textup{and}\quad \phi^{(de)}_{(X,\Delta)}:=\iota_*(\phi^{(de)}_{(U,\Delta|_U)}):{F_X^{de}}_*\L^{(de)}_{(X,\Delta)}\to\O_X 
\end{align*} 
Note that $\phi^{(de)}_{(X,\Delta)}$ is a morphism between reflexive sheaves on $X$ (cf. \cite[Proposition~1.11]{Har94}). \par
\begin{defn}[\textup{ \cite[2.1. Definition]{Smi00}, \cite[Definition 3.1]{SS10} or \cite[Definition 2.6]{MS12}}]\label{defn:F-pure} 
With the notation as above, the pair $(X,\Delta)$ is said to be {\it sharply $F$-pure} (resp. {\it globally $F$-split}) if $\phi^{(e)}_{(X,\Delta)}$ is surjective (resp. split as $\O_X$-module homomorphism) for some $e>0$ satisfying $a|(p^e-1)$. 
We simply say that $X$ is $F$-pure (resp. globally $F$-split) if $(X,0)$ is $F$-pure (resp. globally $F$-split).  
\end{defn}
\begin{rem}\label{rem:F-pure} (1) $(X,\Delta)$ is $F$-pure if and only if $\phi^{(e)}_{(X,\Delta)}$ is surjective for any $e>0$ satisfying $a|(p^e-1)$. 
Indeed, if $\phi^{(e)}_{(X,\Delta)}$ is surjective for an $e>0$ with $a|(p^e-1)$, then $(\phi^{(e)}_{(X,\Delta)}\otimes\L^{(eg)}_{(X,\Delta)})^{**}$ is also surjective for any $g>0$, and so is $\phi^{(e(g+1))}_{(X,\Delta)}$. 
Here $(\underline{\quad})^{**}$ is the functor of the double dual. 
Let $e'>0$ be an integer with $a|(p^{e'}-1)$. 
Then $\phi^{(e')}_{(X,\Delta)}$ is surjective, since $\phi^{(ee')}_{(X,\Delta)}$ is surjective and factors through $\phi^{(e')}_{(X,\Delta)}$.\\
(2) The $F$-purity of $(X,\Delta)$ is independent of the choice of $E$ and $a$ satisfying $\Delta=E/a$. Let $E'$ be an effective AC divisor such that $E'=a'\Delta$ for an integer $a'>0$ not divisible by $p$. 
Let $g>0$ be an integer such that $q:=p^{eg}-1$ satisfies $a|q$, $a'|q$ and $qa^{-1}E=qa'^{-1}E'$. Then we see that 
$$\phi^{(eg)}_{(X,E/a)}\cong\phi^{(eg)}_{(X,qa^{-1}E/q)}\cong\phi^{(eg)}_{(X,qa'^{-1}E'/q)}\cong\phi^{(eg)}_{(X,E'/a')}.$$ 
Thus $\phi^{(e')}_{(X,E'/a')}$ is surjective.\\
(3) A statement similar to (1) and (2) holds for the global $F$-splitting of $(X,\Delta)$.
\end{rem}
\subsection{Trace of relative Frobenius morphisms}\label{subsection:trace_rel}
In this subsection, we introduce notations related to trace maps of relative Frobenius morphisms. See \cite{PSZ13} for more details on trace maps of relative Frobenius morphisms. \par
Let $f:X\to Y$ be a morphism between Gorenstein $k$-schemes of pure dimension. We assume that either $F_Y$ is flat (i.e., $Y$ is regular) or $f$ is flat. Then $F_Y$ or $f$ is a Gorenstein morphism, so $X_{Y^1}$ is a Gorenstein $k$-scheme \cite[III, \S9]{Har66}. We define the relative dualizing sheaf $\o_{X/Y}$ of $f$ to be $\o_X\otimes f^*\o_Y^{-1}$. 
Then we have \begin{align*}\o_{X_{Y^1}/Y^1}:=&\o_{X_{Y^1}}\otimes {f_{Y^1}}^*\o_{Y^1}^{-1}\\\cong&\o_{X_{Y^1}}\otimes {f_{Y^1}}^*\o_{Y^1}^{p-1}\otimes {f_{Y^1}}^*{F_Y}^*\o_Y^{-1} \\\cong& {(F_Y)_X}^!\O_X\otimes{(F_Y)_X}^*\o_X\otimes ({f_{Y^1}}^*{F_Y}^!\O_Y)^{-1} \otimes {(F_Y)_X}^*f^*\o_Y^{-1} \\\cong& {(F_Y)_X}^*\o_{X/Y}=(\o_{X/Y})_{Y^1} \end{align*} by the assumption. Moreover, for positive integers $d,e$, we consider the following commutative diagram: 
\newpage
$$ \xymatrix@R=25pt @C=35pt{ X^{de} \ar[d] \ar@/^25pt/[dd]^(0.5){F_{X^{di}/Y^{di}}^{(d(e-i))}} \ar@/_30pt/[dddd]|(0.7){F_{X/Y}^{(de)}} \ar@/_50pt/[ddddd]_(0.8){f^{(de)}} &&&&&&\\ \vdots \ar[d] &&& \ddots \ar[dr]^{F_X^d} & & & \\ X^{di}_{Y^{de}} \ar[d] \ar@/^50pt/[dd]|(0.6){F_{X_{Y^{d(e-i)}}/Y^{d(e-i)}}^{(di)}} \ar@/^100pt/[ddd]^(0.7){f_{Y^{de}}^{(di)}} && & \cdots \ar[r] & X^{2d} \ar[dr]^{F_X^d} \ar[d]_{F_{X^d/Y^d}^{(d)}} & & \\ \vdots \ar[d] && & \cdots \ar[r] & X^d_{Y^{2d}} \ar[r] \ar[d]_{F_{X_{Y^d}/Y^d}^{(d)}} & X^d \ar[dr]^{F_X^d} \ar[d]_{F_{X/Y}^{(d)}} \\ X_{Y^{de}} \ar[d] && & \cdots \ar[r] & X_{Y^{2d}} \ar[r] \ar[d]_{f_{Y^{2d}}} & X_{Y^d} \ar[r]_{(F_Y^d)_X} \ar[d]_{f_{Y^d}} & X \ar[d]_f \\ Y^{de} && & \cdots \ar[r]_{F_Y^d} & Y^{2d}\ar[r]_{F_Y^d} & Y^d \ar[r]_{F_Y^d} & Y \\ } $$
Let $K_{X/Y}$ be a Cartier divisor on $X$ satisfying $\O_X(K_{X/Y})\cong\o_{X/Y}$. This is denoted by $K_{X^e/Y^e}$ when we regard $f$ as $f^{(e)}$. Set $K_{X^e_{Y^g}/Y^g}:=(K_{X^e/Y^e})_{Y^g}$ for each $g\ge e$. Then for each $e>0$ we define 
\begin{align*} \phi^{(1)}_{X/Y}&:=\Tr_{F^{(1)}_{X/Y}}\otimes\O_{X_{Y^1}}(-K_{X_{Y^1}}):{F^{(1)}_{X/Y}}_*\O_{X^1}((1-p)K_{X^1/Y^1})\to\O_{X_{Y^1}},\quad\textup{and} \\ \phi^{(e+1)}_{X/Y}&:=\left(\phi^{(e)}_{X/Y}\right)_{Y^{e+1}}\circ {F^{(e)}_{X_{Y^1}/Y^1}}_*\left(\phi^{(1)}_{X^{e}/Y^{e}}\otimes\O_{X^e_{Y^{e+1}}}((1-p^{e})K_{X^{e}_{Y^{e+1}}/Y^{e+1}})\right)\\ & \qquad: {F^{(e+1)}_{X/Y}}_*\O_X((1-p^{e+1})K_{X^{e+1}/Y^{e+1}})\to\O_{X_{Y^{e+1}}}. 
\end{align*}
\indent 
Let $\Delta=E/a$ be an effective $\Z_{(p)}$-AC divisor on $X$ and $d$ be the smallest positive integer satisfying $a|(p^d-1)$. For each $e>0$ we define 
\begin{align*} 
\L^{(de)}_{(X,\Delta)/Y}&:=\O_{X^{de}}((1-p^{de})(K_{X^{de}/Y^{de}}+\Delta))\subseteq\O_{X^{de}}((1-p^{de})K_{X^{de}/Y^{de}}), \\ 
\phi^{(d)}_{(X,\Delta)/Y}&:{F^{(d)}_{X/Y}}_*\L^{(d)}_{(X,\Delta)/Y}\to{F^{(d)}_{X/Y}}_*\O_{X^d}((1-p^d)K_{X^{d}/Y^{d}})\xrightarrow{\phi^{(d)}_{X/Y}}\O_{X_{Y^d}},\quad\textup{and}\\ 
\phi^{(d(e+1))}_{(X,\Delta)/Y}&:=\left(\phi^{(de)}_{(X,\Delta)/Y}\right)_{Y^{d(e+1)}}\circ{F^{(de)}_{X_{Y^{d}}/Y^{d}}}_*\left(\phi^{(d)}_{(X^{de},\Delta)/Y^{de}}\otimes\left(\L^{(de)}_{(X,\Delta)/Y}\right)_{Y^{d(e+1)}}\right) \\ 
&:{F^{(d(e+1))}_{X/Y}}_*\L^{(d(e+1))}_{(X,\Delta)/Y}\to \O_{X_{Y^{d(e+1)}}}.
\end{align*} 
\indent
Let $f:X\to Y$ be a morphism between $k$-schemes of pure dimension. Assume that $X$ satisfies $S_2$ and $G_1$, $Y$ is Gorenstein, and $f$ or $F_Y$ is flat. 
Let $\Delta=E/a$ be an effective $\Z_{(p)}$-AC divisor on $X$ and $d$ be the smallest positive integer satisfying $a|(p^d-1)$. 
Let $\iota:U\hookrightarrow X$ be a Gorenstein open subset of $X$ such that $\codim X\setminus U\ge2$ and that $E|_U$ is Cartier. Set $\Delta|_U=E|_U/a$. Then for each $e>0$ we define 
\begin{align*}
\L^{(de)}_{(X,\Delta)/Y}&:={\iota_{Y^{de}}}_*\L^{(de)}_{(U,\Delta|_U)/Y},\quad\textup{and} \\
\phi^{(de)}_{(X,\Delta)/Y}&:={\iota_{Y^{de}}}_*(\phi^{(de)}_{(U,\Delta|_U)}/Y):{F^{(de)}_{X/Y}}_*\L^{(de)}_{(X,\Delta)/Y}\to\O_{X_{Y^{de}}}.
\end{align*} 
\section{Frobenius stable canonical ring} \label{section:rs}
In this section, we introduce and study Frobenius stable canonical rings. 
After definitions and basic properties, we study Frobenius stable canonical rings of varieties with ample canonical bundles. Especially, we consider the case of Gorenstein projective curves (Corollary \ref{cor:rs for F-pure curve}). 
We also discuss the case of varieties with semi-ample canonical bundles in any dimension (Corollary \ref{cor:Iitaka fibrat}). To this end, we prove Theorem \ref{thm:quasi can bdl formula} which is a kind of canonical bundle formula. 
As another application of the theorem, we study Frobenius stable canonical rings of surfaces of general type (Corollary \ref{cor:surf of gen type}).
\begin{notation}\label{notation:pair}
Let $X$ be a $k$-scheme of pure dimension satisfying $S_2$ and $G_1$, and let $\Delta=E/a$ be an effective $\Z_{(p)}$-AC divisor. Set $d>0$ be the smallest integer satisfying $a|(p^d-1)$. 
\end{notation}
\begin{defn}[\textup{\cite[\S 3]{Sch14}}]\label{defn:S0}
In the situation of Notation $\ref{notation:pair}$, let $\mc M$ be a reflexive sheaf on $X$ of rank one such that invertible in codimension one. Then we define $S^0(X,\Delta,\mc M)$ as 
\begin{align*} \bigcap_{e>0}\im\left(H^0(X,(({F^{de}_X}_*\L^{(de)}_{(X,\Delta)})\otimes\mc M)^{**})\xrightarrow{H^0(X,(\phi^{(de)}_{(X,\Delta)}\otimes\mc M)^{**}) } H^0(X,\mc M)\right), 
\end{align*} 
where $\phi^{(de)}_{(X,\Delta)}$ is the morphism defined in Subsection \ref{subsection:trace}, and $(\underline{\quad})^{**}:=\ms Hom(\ms Hom(\underline{\quad},\O_X),\O_X)$ is the functor of the double dual. For any AC divisor $D$ on $X$, we denote $S^0(X,\Delta,\O_X(D))$ by $S^0(X,\Delta,D)$. Write $S^0(X,D):=S^0(X,0,D)$. \end{defn}
\begin{rem}\label{rem:S0} 
The above definition does not depend on the choice of $E$ and $a$ satisfying $\Delta=E/a$. Indeed, if $E'$ and $a'$ satisfy $\Delta=E'/a'$, then by an argument similar to Remark \ref{rem:F-pure} (2) we have $\phi^{(eg)}_{(X,E/a)}\cong\phi^{(eg)}_{(X,E'/a')}$ for every $g>0$ divisible enough.
\end{rem}
\begin{eg}\normalfont\label{eg:gl F-sp} In the situation of Notation $\ref{notation:pair}$, it is easily seen that the following are equivalent: \begin{itemize}\item[(1)] $(X,\Delta)$ is globally $F$-split. \item[(2)] $S^0(X,\Delta,\O_X)=H^0(X,\O_X).$ \item[(3)] $S^0(X,\Delta,D)=H^0(X,D)$ for every AC divisor $D$ on $X$. \end{itemize}\end{eg}
\begin{defn}[\textup{\cite[Section~4.1]{HP13} or \cite[Exercise 4.13]{PST14}}] \label{defn:rs}
In the situation of Notation $\ref{notation:pair}$, let $\mc M$ be a reflexive sheaf on $X$ of rank one such that invertible in codimension one. Then we define \begin{align*} R_S(X,\Delta,\mc M):=&\bigoplus_{n\ge0}S^0(X,\Delta,(\mc M^{\otimes n})^{**}) \subseteq R(X,\mc M):=\bigoplus_{n\ge0}H^0(X,(\mc M^{\otimes n})^{**}).\end{align*} For any AC divisor $D$, we denote $R(X,\O_X(D))$ and $R_S(X,\Delta,\O_X(D))$ respectively by $R(X,D)$ and $R_S(X,\Delta,D)$. $R_S(X,\Delta,a(K_X+\Delta))$ is called the {\it Frobenius stable canonical ring}, where $K_X$ is an AC divisor such that $\O_X(K_X)$ is isomorphic to the dualizing sheaf $\o_X$ of $X$. \\
When $D$ is a $\Q$-Weil divisor on a normal variety $X$, we define 
$$R_S(X,\Delta,D):=\bigoplus_{n\ge0}S^0(X,\Delta,\lfloor nD\rfloor)\subseteq R(X,D):=\bigoplus_{n\ge0}H^0(X,\lfloor nD\rfloor).$$
\end{defn}
\begin{lem}[\textup{\cite[Lemma~4.1.1]{HP13}}] \label{lem:ideal} $R_S(X,\Delta,D)$ is an ideal of $R(X,D)$. \end{lem}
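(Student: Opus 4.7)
The plan is to show that for any $s\in S^0(X,\Delta,(\mathcal M^{\otimes n})^{**})$ and any $t\in H^0(X,(\mathcal M^{\otimes m})^{**})$ (with $\mathcal M=\mathcal O_X(D)$), the product $st$ lies in $S^0(X,\Delta,(\mathcal M^{\otimes(n+m)})^{**})$. To verify this, I would unravel the definition of $S^0$: for each $e>0$ I need to produce an element of
$$H^0\bigl(X,\bigl((F^{de}_{X*}\mathcal L^{(de)}_{(X,\Delta)})\otimes\mathcal M^{\otimes(n+m)}\bigr)^{**}\bigr)$$
that maps to $st$ under $\phi^{(de)}_{(X,\Delta)}\otimes\mathcal M^{\otimes(n+m)}$ (double-dualized). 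The hypothesis on $s$ supplies, for each $e$, a lift $\tilde s_e\in H^0(X,((F^{de}_{X*}\mathcal L^{(de)}_{(X,\Delta)})\otimes\mathcal M^{\otimes n})^{**})$ with image $s$, and the task is to build the corresponding lift of $st$ from $\tilde s_e$ and $t$.

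The key mechanism is the projection formula for reflexive sheaves. First I would restrict to the Gorenstein open subset $U\subseteq X$ with $\mathrm{codim}(X\setminus U)\ge 2$ on which $\mathcal M$ and $E$ are invertible; there the standard projection formula gives a canonical isomorphism
$$F^{de}_{U*}\bigl(\mathcal L^{(de)}_{(U,\Delta|_U)}\otimes F^{de*}_U\mathcal M^{\otimes k}\bigr)\cong F^{de}_{U*}\mathcal L^{(de)}_{(U,\Delta|_U)}\otimes\mathcal M^{\otimes k}.$$
Pushing forward along $U\hookrightarrow X$ and taking double duals, the same identification holds on $X$ between the relevant reflexive sheaves. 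Under this identification, $\tilde s_e$ corresponds to a section of $\mathcal L^{(de)}_{(X,\Delta)}\otimes (F^{de*}_X\mathcal M^{\otimes n})^{**}$; multiplying by the Frobenius pullback $F^{de*}_X t$, a section of $(\mathcal M^{\otimes p^{de}m})^{**}$, yields a section of $\mathcal L^{(de)}_{(X,\Delta)}\otimes(F^{de*}_X\mathcal M^{\otimes(n+m)})^{**}$, i.e.\ an element $\widetilde{st}_e$ of the desired upstairs space.

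Next I would use that $\phi^{(de)}_{(X,\Delta)}$ is $\mathcal O_X$-linear (it is defined as $\iota_*$ of a Frobenius pushforward of an $\mathcal O_{X^{de}}$-linear map, hence is $\mathcal O_X$-linear by adjunction). Consequently, working on $U$ where everything is Cartier and then extending by reflexivity,
$$(\phi^{(de)}_{(X,\Delta)}\otimes\mathcal M^{\otimes(n+m)})^{**}(\widetilde{st}_e)=\bigl((\phi^{(de)}_{(X,\Delta)}\otimes\mathcal M^{\otimes n})^{**}(\tilde s_e)\bigr)\cdot t=s\cdot t.$$
Since $e$ was arbitrary, $st\in S^0(X,\Delta,(\mathcal M^{\otimes(n+m)})^{**})$, proving that $R_S(X,\Delta,D)$ is an ideal of $R(X,D)$. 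For the $\Q$-Weil case on a normal $X$, the same argument works after noting the inclusion $\lfloor nD\rfloor+\lfloor mD\rfloor\le\lfloor(n+m)D\rfloor$, which gives the natural multiplication map on the graded pieces.

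The main obstacle I anticipate is purely technical: making the projection-formula identification rigorous at the level of reflexive (rather than locally free) sheaves, since $\mathcal M$ and $F^{de}_{X*}\mathcal L^{(de)}_{(X,\Delta)}$ are only reflexive. The clean way is to do everything on the Gorenstein locus $U$ (where the projection formula and $\mathcal O_X$-linearity of the trace are classical) and then extend by the functor $\iota_*$ together with the fact that reflexive sheaves are determined by their restriction to a codimension-two open, as used throughout Section~\ref{subsection:trace}.
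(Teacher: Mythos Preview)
Your proposal is correct and is precisely the standard argument to which the paper defers (the paper's proof consists solely of a reference to \cite[Lemma~4.1.1]{HP13}). Your write-up in fact supplies more detail than either source, and the handling of reflexive sheaves via restriction to the Gorenstein locus $U$ is exactly in line with the conventions of Subsection~\ref{subsection:trace}.
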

\begin{proof} This follows from an argument similar to the proof of \cite[Lemma~4.1.1]{HP13}. \end{proof}
\begin{notation}\label{notation:r/rs}
We denote by $R/R_S(X,\Delta,D)$ the quotient ring of $R(X,D)$ modulo $R_S(X,\Delta,D)$.
\end{notation}
We recall that the assumption (ii) of the main theorem (Theorem \ref{thm:main thm intro}): there exists an $m_0>0$ such that $S^0(X_{\ol\eta},\Delta_{\ol\eta},am(K_{X_{\ol\eta}}+\Delta_{\ol\eta}))=H^0(X_{\ol\eta},am(K_{X_{\ol\eta}}+\Delta_{\ol\eta}))$ for every $m\ge m_0$.
This is equivalent to the condition that there exists an integer $m_0>0$ such that the degree $m$ part of $R/R_S(X_{\ol\eta},\Delta_{\ol\eta},a(K_{X_{\ol\eta}}+\Delta_{\ol\eta}))$ is zero for every $m\ge m_0$.
Note that the existence of such $m_0$ is equivalent to the finiteness of the dimension of $k$-vector space $R/R_S(X_{\ol\eta},\Delta_{\ol\eta},a(K_{X_{\ol\eta}}+\Delta_{\ol\eta}))$
\begin{defn}\label{defn:fg}
In the situation of Notation $\ref{notation:pair}$, assume that each connected component of $X$ is integral. An AC divisor $D$ is said to be {\it finitely generated} if $R(X,D)$ is a finitely generated $k$-algebra. A $\Z_{(p)}$-AC (resp. $\Q$-AC) divisor $\Gamma$ is said to be {\it finitely generated} if there exists a finitely generated AC divisor $D$ such that $\Gamma=D\otimes\lambda$ for some $0<\lambda\in\Z_{(p)}$ (resp. $\Q$). \end{defn} 
\begin{lem}\label{lem:fg}
Let $R=\bigoplus_{m\ge0}R_m$ be a graded ring. Assume that $R$ is a domain and $R_0$ is a field. \begin{itemize} \item[(1)] If the $n$-th Veronese subring $R^{(n)}:=\bigoplus_{m\ge0}R_{mn}$ is a finitely generated $R_0$-algebra for some $n>0$, then so is $R$. \item[(2)] Let $\mf a\subseteq R$ be a nonzero homogeneous ideal, and suppose that $R$ is a finitely generated $R_0$-algebra. If $R^{(n)}/\mf a^{(n)}$ is a finite dimensional $R_0$-vector space for some $n>0$, then so is $R/\mf a$, where $\mf a^{(n)}:=\bigoplus_{m\ge0} \mf a_{mn}$. \end{itemize}\end{lem}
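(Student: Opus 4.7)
The plan is to prove both parts by reducing to standard module-theoretic facts about finitely generated algebras over a field. The unifying observation is that any homogeneous element $x \in R_m$ satisfies $x^n \in R_{mn} \subseteq R^{(n)}$, so $n$-th powers always land in the Veronese subring, and this will let me transfer finiteness information between $R$ and $R^{(n)}$.

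For (1), I would prove that $R$ is finitely generated as a module over $S := R^{(n)}$; combined with $S$ being a finitely generated $R_0$-algebra, this gives the conclusion by Artin--Tate. I decompose $R = \bigoplus_{i=0}^{n-1} M_i$ as a graded $S$-module, where $M_i := \bigoplus_{m \ge 0} R_{mn+i}$, so it suffices to show each $M_i$ is finitely generated over $S$. If $M_i = 0$ there is nothing to prove; otherwise I pick a nonzero homogeneous element $y \in R_{kn+i}$ and observe that for any $x \in R_{mn+i}$ the product $xy^{n-1}$ lies in degree $(m + (n-1)k)n + ni$, which is a multiple of $n$. Hence multiplication by $y^{n-1}$ yields an $S$-linear map $M_i \to S$, which is injective since $R$ is a domain and $y \ne 0$. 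Thus $M_i$ embeds as an ideal in the noetherian ring $S$ and is therefore finitely generated.

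For (2), I first note that since $\mathfrak a$ is a homogeneous ideal, $\mathfrak a \cap R^{(n)} = \mathfrak a^{(n)}$, so $(R/\mathfrak a)^{(n)} = R^{(n)}/\mathfrak a^{(n)}$, which is finite-dimensional over $R_0$ by hypothesis. Setting $\bar R := R/\mathfrak a$, I may assume $\bar R \ne 0$, and then $\bar R_0 = R_0$ because $R_0$ is a field. Since $R$ is a finitely generated $R_0$-algebra, so is $\bar R$, and I may take homogeneous generators $\bar x_1, \ldots, \bar x_r$ of positive degree. Each $\bar x_j^n$ has degree divisible by $n$ and hence lives in the finite-dimensional algebra $(R/\mathfrak a)^{(n)}$; consequently some power $\bar x_j^{nN_j}$ vanishes, so each $\bar x_j$ is nilpotent. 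A finitely generated graded $R_0$-algebra whose positive-degree generators are all nilpotent is spanned by a bounded set of monomials, hence is finite-dimensional over $R_0$.

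The main point requiring care is the argument in (1): multiplying $M_i$ by $y^{n-1}$ (rather than by $y$ itself) is precisely what makes the image land inside $S$, and this trick uses in an essential way both that $R$ is a domain (for injectivity) and that $R^{(n)}$ is noetherian (to conclude finite generation of the image as an ideal). Part (2) is then mostly bookkeeping: once nilpotency of the generators is extracted from the Veronese hypothesis, the conclusion is formal.
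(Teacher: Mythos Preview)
Your proof is correct. A small quibble in (1): the step ``$S$ finitely generated over $R_0$ and $R$ module-finite over $S$ $\Rightarrow$ $R$ finitely generated over $R_0$'' is the trivial direction, not Artin--Tate (which goes the other way); but the argument itself is fine and is essentially the standard one the paper cites from \cite{HK10}.

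For (2) you take a genuinely different route from the paper. The paper argues directly on the ideal side: it picks $l$ a sufficiently large multiple of $n$ so that $\mf a_l = R_l$ (from the Veronese hypothesis) and so that $R_{l+m} = R_l \cdot R_m$ for all $m \ge n_0$ (from finite generation of $R$); then $R_{l+m} = \mf a_l \cdot R_m \subseteq \mf a_{l+m}$, giving $\mf a_m = R_m$ for $m \gg 0$. You instead pass to the quotient $\bar R = R/\mf a$, use the Veronese hypothesis to see that $\bar x_j^n$ lies in a finite-dimensional graded $R_0$-algebra and is therefore nilpotent, and conclude $\bar R$ is spanned by finitely many monomials. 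Your argument is entirely self-contained, whereas the paper's relies on the auxiliary fact $R_{l+m} = R_l \cdot R_m$ for suitable $l$, which is standard but not proved there. Both approaches are short and valid.
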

\begin{proof} For the proof of (1) we refer the proof of \cite[Lemma 5.68]{HK10}.
For (2), let $l>0$ be an integer divisible enough. Then there exists $n_0>0$ such that $\mf a_{l+n}\subseteq R_{l+n}=R_{n}\cdot R_{l}=R_n\cdot\mf a_{l}\subseteq \mf a_{l+n}$ for each $n\ge n_0$, and hence $\mf a_{m}=R_m$ for each $m\gg0$, which is our claim. \end{proof}
As mentioned after Notation \ref{notation:r/rs}, the assumption (ii) of the main theorem (Theorem \ref{thm:main thm intro}) satisfied if and only if $R/R_S(X_{\ol\eta},\Delta_{\ol\eta},a(K_{X_{\ol\eta}}+\Delta_{\ol\eta}))$ is finite dimensional as $k$-vector space. This condition is equivalent to the condition that $R/R_S(X_{\ol\eta},\Delta_{\ol\eta},an(K_{X_{\ol\eta}}+\Delta_{\ol\eta}))$ is finite dimensional for an integer $n>0$ by (2) of the above lemma.
\begin{defn}
In the situation of Notation $\ref{notation:pair}$, we denote the kernel of $\phi^{(de)}_{(X,\Delta)}:{F_X^{de}}_*\L^{(de)}_{(X,\Delta)}\to \O_X$ by $\B^{de}_{(X,\Delta)}$ for every integer $e>0$. When $\Delta=0$, we denote $\B^{e}_{(X,0)}$ by $\B^{e}_X$. \end{defn} 
\begin{eg}\normalfont\label{eg:rs for ample div} 
In the situation of Notation $\ref{notation:pair}$, assume that $X$ is projective, and $(p^{c}-1)(K_X+\Delta)$ is Cartier for some $c>0$ divisible by $d$. 
Let $H$ be an ample Cartier divisor. We show that $R/R_S(X,\Delta,H)$ is finite dimensional if and only if $(X,\Delta)$ is $F$-pure. 
By the Fujita vanishing theorem, there is an $m>0$ such that $H^1(X,\B^{c}_{(X,\Delta)}(mH+N))=0$ for every nef Cartier divisor $N$. 
We may assume that $mH-(K_X+\Delta)$ is nef. 
If $(X,\Delta)$ is $F$-pure, or equivalently if $\phi^{(c)}_{(X,\Delta)}$ is surjective, then so is the morphism $H^0(X,\phi^{(c)}_{(X,\Delta)}\otimes\O_X(mH+N))$. 
Furthermore we see that $H^0(X,\phi^{(ce)}_{(X,\Delta)}\otimes\O_X(mH+N))$ is also surjective for each $e>0$, because of the definition of $\phi^{(ce)}_{(X,\Delta)}$ and the following isomorphisms 
\begin{align*} 
&\left({F_X^{ce}}_*(\phi^{(c)}_{(X,\Delta)}\otimes\L^{(ce)}_{(X,\Delta)})\right)\otimes\O_X(mH+N)\\ \cong&{F_X^{ce}}_*\left(\phi^{(c)}_{(X,\Delta)}\otimes\O_X(mH+(p^{ce}-1)(mH-(K_X+\Delta))+p^{ce}N)\right). 
\end{align*} 
This implies that $S^0(X,\Delta,mH+N)=H^0(X,mH+N)$ and that $R/R_S(X,\Delta,H)$ is finite dimensional.
Conversely it is clear that if $R/R_S(X,\Delta,H)$ is finite dimensional, then $\phi^{(c)}_{(X,\Delta)}$ is surjective, or equivalently, $(X,\Delta)$ is $F$-pure. 
\end{eg}
The above example shows that if $(X_{\ol\eta},\Delta_{\ol\eta})$ is $F$-pure and $K_{X_{\ol\eta}}+\Delta_{\ol\eta}$ is an ample $\Z_{(p)}$-Cartier divisor, then the assumption (ii) (and (i)) of the main theorem (Theorem \ref{thm:main thm intro}) holds. 
We next consider the value of such $m_0$ in the case when $X_{\ol\eta}$ a curve. Corollary \ref{cor:rs for F-pure curve} provides a value of such $m_0$ effectively when $K_{X_{\ol\eta}}+\Delta_{\ol\eta}$ is ample.
\begin{lem}\label{lem:rs for ample div on curve} \samepage 
Let $X$ be a Gorenstein projective curve, and let $H$ be an ample Cartier divisor such that $H-K_X$ is nef. Then for each integer $e,m\ge1$, $$ H^1(X,\B^e_X\otimes\O_X(K_X+mH))=0. $$ Moreover if $X$ is $F$-pure, then $$S^0(X,K_X+mH)=H^0(X,K_X+mH).$$ 
\end{lem}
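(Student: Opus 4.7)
The plan is to exploit the defining short exact sequence of $\B^e_X$ together with Serre duality on the Gorenstein projective curve. Setting $\mc I_e := \Im \phi^{(e)}_X \subseteq \O_X$, we have
$$ 0 \to \B^e_X \to F^e_{X*}\O_X((1-p^e)K_X) \to \mc I_e \to 0. $$
Tensoring with the line bundle $\O_X(K_X+mH)$ and applying the projection formula produces
$$ 0 \to \B^e_X \otimes \O_X(K_X+mH) \to F^e_{X*}\O_X(K_X+p^e mH) \to \mc I_e \otimes \O_X(K_X+mH) \to 0. $$

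Since $F^e_X$ is finite, $H^1(X, F^e_{X*}\O_X(K_X+p^e mH)) \cong H^1(X, \O_X(K_X+p^e mH))$, and Serre duality on the Gorenstein integral projective curve identifies this with $H^0(X, \O_X(-p^e mH))^\vee$, which vanishes because $H$ is ample and $p^e m \ge 1$. The cohomology long exact sequence then identifies $H^1(X, \B^e_X \otimes \O_X(K_X+mH))$ with the cokernel of the trace map on global sections
$$ \alpha : H^0(X, F^e_{X*}\O_X(K_X+p^e mH)) \to H^0(X, \mc I_e \otimes \O_X(K_X+mH)), $$
so it remains to show that $\alpha$ is surjective.

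To produce this surjectivity, I will dualize via $\ms Hom(-,\o_X)$. The crucial structural input is that on a 1-dimensional Gorenstein scheme, every torsion-free coherent sheaf is maximal Cohen--Macaulay, so $\ms Ext^i(\mc F,\o_X)=0$ for $i>0$ whenever $\mc F$ is torsion-free; this applies to both $\mc I_e$ and $F^e_{X*}\O_X((1-p^e)K_X)$. Combined with Grothendieck duality for $F^e_X$ (simplified by $F^!_X\o_X \cong \o_X$, which holds because $X$ is Gorenstein), dualizing the sequence above yields the short exact sequence
$$ 0 \to \ms Hom(\mc I_e, \o_X) \to F^e_{X*}\o_X^{\otimes p^e} \to \ms Hom(\B^e_X, \o_X) \to 0. $$
Twisting by $\O_X(-K_X-mH)$ and taking the cohomology long exact sequence, the vanishing $H^0(F^e_{X*}\O_X(-p^e mH))=0$ (again by ampleness of $H$), together with the Serre duality identifications $H^1(\B^e_X\otimes\O_X(K_X+mH))^\vee \cong \Hom(\B^e_X, \O_X(-mH))$ and $H^1(\ms Hom(\mc I_e, \o_X)\otimes\O_X(-K_X-mH)) \cong H^0(\mc I_e\otimes\O_X(K_X+mH))^\vee$, reduces the $H^1$-vanishing to the surjectivity of $\alpha$ that we want; closing this circle is exactly where the MCM fact above is used to kill the obstructions.

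The main obstacle I anticipate is this last step: carefully tracing through the Serre--Grothendieck identifications so that the two long exact sequences fit together to yield surjectivity of $\alpha$. Once the first vanishing is in place, the \emph{moreover} statement follows immediately: when $X$ is $F$-pure the map $\phi^{(e)}_X$ is surjective for every admissible $e$, so $\mc I_e=\O_X$, and the surjectivity of $\alpha$ becomes the surjectivity of $H^0(\O_X(K_X+p^e mH)) \twoheadrightarrow H^0(\O_X(K_X+mH))$ for every $e\ge 1$. Since $S^0(X, K_X+mH)$ is by definition the intersection over $e$ of the images of these trace maps, this gives $S^0(X, K_X+mH)=H^0(X, K_X+mH)$.
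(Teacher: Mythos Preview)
Your argument is circular. After dualizing the defining sequence of $\B^e_X$ and twisting by $\O_X(-K_X-mH)$, the long exact sequence you obtain is precisely the Serre dual of the long exact sequence you started with: the injection $\Hom(\B^e_X,\O_X(-mH))\hookrightarrow H^1(\ms Hom(\mc I_e,\O_X(-mH)))$ is nothing but the dual of the connecting map $H^0(\mc I_e(K_X+mH))\twoheadrightarrow H^1(\B^e_X(K_X+mH))$, so you learn nothing new. The MCM/reflexivity facts you invoke are exactly what is needed to make these Serre duality identifications hold, but once they hold the two sequences carry identical information. There is no way to ``close the circle'' here.

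A concrete symptom: you never use the hypothesis that $H-K_X$ is nef, and the lemma is false without it (e.g.\ take $X$ smooth of large genus and $H$ a single point). The paper's proof uses this hypothesis in an essential way. It first normalizes $\nu:C\to X$ and observes that $\nu^*H-K_C\sim\nu^*(H-K_X)+(\textup{conductor})$ is again nef, reducing to the smooth case. There the Cartier operator gives a \emph{different} short exact sequence
\[
0\to\O_X(mH)\to F_{X*}\O_X(pmH)\to \B^1_X(K_X+mH)\to 0,
\]
which exhibits $H^1(\B^1_X(K_X+mH))$ as a quotient of $H^1(pmH)\cong H^0(K_X-pmH)^\vee$; the nefness of $H-K_X$ forces $\deg(K_X-pmH)<0$, hence this vanishes. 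Surjectivity of $H^0(\phi^{(1)}_X\otimes\O_X(K_X+mH))$ follows, and iterating via the factorization of $\phi^{(e)}_X$ gives surjectivity for all $e$, after which the original sequence yields $H^1(\B^e_X(K_X+mH))\hookrightarrow H^1(K_X+p^emH)=0$. The key asymmetry is that the Cartier sequence places $\B^1_X(K_X+mH)$ on the \emph{right}, so its $H^1$ is controlled by $H^1$ of the middle term rather than by a cokernel of $H^0$'s.
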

\begin{proof} 
Clearly the second statement follows from the first and the long exact sequence of cohomology induced from the surjective morphism $\phi^{(e)}_{(X,\Delta)}\otimes\O_X(K_X+mH)$. 
We prove the first statement. Let $\nu:C\to X$ be the normalization. Then a commutative diagram of varieties 
\[ \xymatrix{ C \ar[r]^{F_C^e} \ar[d]_\nu & C \ar[d]^\nu \\ X \ar[r]_{F_X^e} & X } \] induces a commutative diagram of $\O_X$-modules:
\[ \xymatrix{  0 \ar[r] & \nu_*\B^e_C(K_C) \ar[r] \ar[d]^\a & \nu_*{F_C^e}_*\o_C \ar[r]^{\nu_*\Tr_{F_C^e}} \ar[d]^{{F_X^e}_*\Tr_\nu} & \nu_*\o_C \ar[r] \ar[d]^{\Tr_{\nu}} & 0 \\  0 \ar[r] & \B^e_X(K_X) \ar[r] & {F_X^e}_*\o_X \ar[r]_{\Tr_{F_X^e}} & \o_X & }\]  
Since each vertical morphism is an isomorphism on some dense open subset of $X$, the kernel and the cokernel of $\a$ are torsion $\O_X$-modules. 
Furthermore since $\B^e_C$ has no torsion, we see that $\a$ is injective. For each $m>0$, the following exact sequence 
$$ 0 \to (\nu_*\B^e_C(K_C))(mH) \xrightarrow{\a\otimes\O_X(mH)} \B^e_X(K_X+mH) \to \coker(\a) \to 0, $$ induces a surjection 
$$ H^1(C,\B^e_C(K_C+m\nu^*H))\cong H^1(X,(\nu_*\B^e_C(K_C))(mH)) \twoheadrightarrow H^1(X,\B^e_X(K_X+mH)).$$ 
Moreover, since $\nu^*H$ is ample and $$\nu^* H-K_C=\nu^*(H-K_X)+\nu^*K_X-K_C\sim \nu^*(H-K_X)+E$$ is nef, where $E$ is effective divisor on $C$ defined by the conductor ideal, we may assume that $X$ is smooth. 
Then we have $ H^1(X,mpH)=H^0(X,K_X-mpH)=0 $ for each $m\ge1$ by the Serre duality. For each $m\ge1$ there exists an exact sequence 
$$ 0 \to \O_X(mH) \to {F_X}_*\O_X(mpH) \to \B^1_X(K_X+mH) \to 0 $$ induced by Cartier operator, which shows that $H^1(X,\B^1_X(K_X+mH))=0$. 
This implies $H^0(X,\phi^{(1)}_{X}\otimes\O_X(K_X))$ is surjective, and thus $H^0(X,\phi^{(e)}_X\otimes\O_X(K_X+mH))$ is also surjective for every $e,m\ge 1$ because of the definition of $\phi^{(e)}_{X}$. 
Hence the exact sequence $$ 0 \to \B^e_X(K_X+mH) \to {F_X^e}_*\O_X(K_X+mp^eH) \xrightarrow{\phi^{(e)}_X\otimes\O_X(K_X+mH)} \O_X(K_X+mH) \to 0 $$ 
induces the following:
\begin{align*} H^1(X,\B^e_X(K_X+mH)) \hookrightarrow H^1(X,\O_X(K_X+mp^eH)) \cong H^0(X,-mp^eH) = 0.  \end{align*} 
\end{proof} 
\begin{prop}\label{prop:rs for F-pure curve} \samepage 
In the situation of Notation $\ref{notation:pair}$, let $X$ be a projective curve, let $K_X+\Delta$ is nef and let $H$ be a Cartier divisor. Assume either that $(\rm i)$ $H+(a-1)K_X$ is ample and $H+(a-2)K_X$ is nef, or that $(\rm ii)$ $X\cong\mathbb P^1$ and $H$ is ample. Then for each $e>0$, $$H^1(X,\B^{de}_{(X,\Delta)}\otimes\O_X(a(K_X+\Delta)+H))=0. $$ Moreover if $(X,\Delta)$ is $F$-pure, then $$S^0(X,\Delta,a(K_X+\Delta)+H)=H^0(X,a(K_X+\Delta)+H).$$ \end{prop}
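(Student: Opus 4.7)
I plan to prove the proposition by induction on $e$, with the main technical step being the base case $e = 1$. First, I reduce to the case where $X$ is smooth via the normalization $\nu: C \to X$ as in the proof of Lemma \ref{lem:rs for ample div on curve}. The conductor divisor contributes an effective correction that is absorbed into the boundary, and by suitably adjusting $H$ one verifies that hypothesis (i) or (ii) survives on $C$ in a form sufficient for the argument.

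On smooth $X$, set $\mathcal{N} := \mathcal{O}_X(a(K_X + \Delta) + H)$ and consider the defining exact sequence
$$0 \to \mathcal{B}^{d}_{(X, \Delta)} \to F^{d}_{X*}\mathcal{L}^{(d)}_{(X, \Delta)} \xrightarrow{\phi^{(d)}_{(X, \Delta)}} \mathcal{J}_1 \to 0,$$
with $\mathcal{J}_1 := \operatorname{Im}(\phi^{(d)}_{(X, \Delta)}) \subseteq \mathcal{O}_X$. By the projection formula,
$$F^{d}_{X*}\mathcal{L}^{(d)}_{(X, \Delta)} \otimes \mathcal{N} \cong F^{d}_{X*}\mathcal{O}_X\bigl((1 + p^d(a-1))(K_X + \Delta) + p^d H\bigr),$$
whose $H^1$ vanishes by Serre duality: under (i), the Serre dual $-\Delta - p^d[(a-1)(K_X + \Delta) + H]$ has negative degree because $(a-1)K_X + H$ is ample and $\Delta$ is effective; under (ii), the same follows from a direct degree calculation on $\mathbb{P}^1$. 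When $(X, \Delta)$ is $F$-pure, $\mathcal{J}_1 = \mathcal{O}_X$ and the long exact sequence immediately yields the desired vanishing.

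In general, I compare with the $\Delta = 0$ trace by applying the snake lemma to the inclusion $\mathcal{L}^{(d)}_{(X,\Delta)} \hookrightarrow \mathcal{O}_X((1-p^d)K_X)$; this produces the four-term exact sequence
$$0 \to \mathcal{B}^{d}_{(X, \Delta)} \to \mathcal{B}^{d}_X \to F^{d}_{X*}\mathcal{R} \to \mathcal{O}_X / \mathcal{J}_1 \to 0,$$
where $\mathcal{R}$ is a torsion sheaf supported on $\tfrac{p^d-1}{a}E$. The two rightmost terms have zero-dimensional support, hence vanishing higher cohomology after twisting by $\mathcal{N}$; Lemma \ref{lem:rs for ample div on curve} applied with $D := (a-1)K_X + a\Delta + H$ (ample with $D - K_X$ nef in case (i), and ample on $\mathbb{P}^1$ in case (ii)) gives $H^1(\mathcal{B}^{d}_X \otimes \mathcal{N}) = 0$. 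Splitting the four-term sequence and tracing the long exact sequences in cohomology then yields $H^1(\mathcal{B}^d_{(X, \Delta)} \otimes \mathcal{N}) = 0$.

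For the inductive step, the factorization $\phi^{(d(e+1))}_{(X,\Delta)} = \phi^{(de)}_{(X,\Delta)} \circ F^{de}_{X*}(\phi^{(d)}_{(X,\Delta)} \otimes \mathcal{L}^{(de)}_{(X,\Delta)})$ fits $\mathcal{B}^{d(e+1)}_{(X,\Delta)}$ into an extension involving $\mathcal{B}^{de}_{(X,\Delta)}$ and a Frobenius pushforward of $\mathcal{B}^{d}_{(X,\Delta)} \otimes \mathcal{L}^{(de)}_{(X,\Delta)}$; the projection formula identifies the $H^1$ of the latter, twisted by $\mathcal{N}$, with the $H^1$ of $\mathcal{B}^d_{(X,\Delta)}$ twisted by a more positive divisor still satisfying (i) or (ii), so the base case applies. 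The ``moreover'' assertion is then immediate: $F$-purity of $(X,\Delta)$ makes $\phi^{(de)}_{(X,\Delta)}$ surjective, so the long exact sequence of $0 \to \mathcal{B}^{de}_{(X,\Delta)}(\mathcal{N}) \to F^{de}_{X*}\mathcal{L}^{(de)}_{(X,\Delta)}(\mathcal{N}) \to \mathcal{N} \to 0$ together with the established $H^1$-vanishing shows $H^0(\phi^{(de)}_{(X,\Delta)} \otimes \mathcal{N})$ is surjective for every $e > 0$, hence $S^0 = H^0$. The main obstacle is the cohomological bookkeeping required in the non-$F$-pure base case, where the four-term comparison with $\mathcal{B}^d_X$ and the zero-dimensional torsion of $\mathcal{R}$ and $\mathcal{O}_X/\mathcal{J}_1$ are essential for bypassing the non-surjectivity of $\phi^{(d)}_{(X,\Delta)}$.
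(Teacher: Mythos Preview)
Your four-term comparison with $\B^d_X$ does not yield the vanishing you claim. From the short exact sequence
\[
0 \to \B^d_{(X,\Delta)}\otimes\mathcal{N} \to \B^d_X\otimes\mathcal{N} \to \mathcal{K}\otimes\mathcal{N} \to 0
\]
(with $\mathcal{K}$ the torsion image of $\B^d_X\to F^d_{X*}\mathcal{R}$), the long exact sequence reads
\[
H^0(\B^d_X\otimes\mathcal{N}) \to H^0(\mathcal{K}\otimes\mathcal{N}) \xrightarrow{\delta} H^1(\B^d_{(X,\Delta)}\otimes\mathcal{N}) \to H^1(\B^d_X\otimes\mathcal{N}).
\]
Knowing $H^1(\B^d_X\otimes\mathcal{N})=0$ only shows that $H^1(\B^d_{(X,\Delta)}\otimes\mathcal{N})$ is a \emph{quotient} of $H^0(\mathcal{K}\otimes\mathcal{N})$, and a nonzero torsion sheaf on a curve has nonzero $H^0$. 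Nothing you have written forces the connecting map $\delta$ to vanish. The inclusion $\B^d_{(X,\Delta)}\hookrightarrow\B^d_X$ simply points the wrong way for the implication you want. The same obstruction resurfaces in your inductive step once $(X,\Delta)$ is not $F$-pure: the factorization of $\phi^{(d(e+1))}$ only produces an extension of $\B^{d(e+1)}$ by $\B^{de}\cap\im(F^{de}_*\alpha)$, not by $\B^{de}$ itself, and the discrepancy is again torsion sitting on the wrong side.

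The paper's fix is to compare in the opposite direction by simultaneously changing the twist. For any effective $\Delta'\le\Delta$ one gets an injection
\[
\B^{de}_{(X,\Delta')}\bigl(a(K_X+\Delta')+H\bigr) \hookrightarrow \B^{de}_{(X,\Delta)}\bigl(a(K_X+\Delta)+H\bigr)
\]
with torsion cokernel, so now $H^1$ of the left side \emph{surjects} onto $H^1$ of the right side. In case~(i) one takes $\Delta'=0$ and applies Lemma~\ref{lem:rs for ample div on curve} to the ample divisor $(a-1)K_X+H$ directly on the Gorenstein curve $X$; in case~(ii) one chooses $\Delta'$ with $a(K_X+\Delta')\sim 0$ and argues via the splitting of vector bundles on $\mathbb{P}^1$. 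This handles all $e$ at once and needs no induction. The decisive point you are missing is that both $\L^{(de)}$ \emph{and} the line bundle twist must change when passing from $\Delta$ to $\Delta'$; keeping the twist fixed, as you do, reverses the inclusion of kernels.
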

\begin{proof} 
Clearly the second statement follows from the first and the long exact sequence of cohomology induced from the surjective morphism $\phi^{(e)}_{(X,\Delta)}\otimes\O_X(K_X+mH)$. 
We prove the first statement. Let $E'$ be an effective Cartier divisor satisfying $\O_X(E')\subseteq\O_X(E)$ and $\Delta':=E'/a$.
For each $e>0$ there is a commutative diagram $$ \xymatrix@R=25pt @C=20pt{ {F_X^{de}}_*\L^{(de)}_{(X,\Delta)}(p^{de}(a(K_X+\Delta)+H)) \ar@/^4pt/[r]^(0.55){\phi^{(de)}_{(X,\Delta)}\otimes\O_X(a(K_X+\Delta)+H)} & \O_X(a(K_X+\Delta)+H) \\ {F_X^{de}}_*\L^{(de)}_{(X,\Delta)}(p^{de}(a(K_X+\Delta')+H)) \ar@/^4pt/[r]^(0.55){\phi^{(de)}_{(X,\Delta')}\otimes\O_X(a(K_X+\Delta')+H)} \ar@{^(->}[u] & \O_X(a(K_X+\Delta')+H) \ar@{^(->}[u] }$$  
where the vertical morphisms are natural inclusion. 
This induces the injective morphism $$ \B^{de}_{(X,\Delta')}(a(K_X+\Delta')+H)\to\B^{de}_{(X,\Delta)}(a(K_X+\Delta)+H)$$
whose cokernel is a torsion $\O_X$-module. Hence it suffices to prove that $H^1(X,\B^{de}_{(X,\Delta)}(a(K_X+\Delta')+H))=0$. When (i) holds, we set $E'=0$. By the previous lemma we have $H^1(X,\B^{de}_{X}(aK_X+H))=0$. When (ii) holds, we may assume $a(K_X+\Delta')\sim0$. Then it is easily seen that $\dim H^1(X,\B^{de}_{(X,\Delta')})\le 1$. Since every vector bundle on $\mathbb{P}^1$ is isomorphic to a direct sum of line bundles, we have $H^1(X,\B^{de}_{(X,\Delta')}(H))=0$. This completes the proof.  
\end{proof} 
The following corollary will be used to prove weak positivity theorem for fibrations of relative dimension one (Corollary \ref{cor:1 dim}).
\begin{cor}\label{cor:rs for F-pure curve} 
In the situation of Notation $\ref{notation:pair}$, assume that $X$ is a projective curve and $(X,\Delta)$ is $F$-pure. If $K_X+\Delta$ is ample $($resp. $K_X$ is ample and $a\ge2$$)$, then for each $m\ge 2$ $($resp. $m\ge 1$$)$, $$S^0(X,\Delta,am(K_X+\Delta))=H^0(X,am(K_X+\Delta)).$$ 
\end{cor}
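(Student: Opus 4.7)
The plan is to deduce the corollary from Proposition \ref{prop:rs for F-pure curve} by choosing a suitable Cartier divisor $H$ so that $am(K_X+\Delta) = a(K_X+\Delta) + H$. First I would observe that $X$ is in fact a Gorenstein projective curve (the conditions $S_2$ and $G_1$ together with pure dimension one force Gorensteinness), so every AC divisor on $X$ is Cartier: an AC divisor is invertible at every point of codimension $\le 1$, which on a curve means everywhere. In particular $K_X$ and $E = a\Delta$ are Cartier, so
\[
H \ := \ a(m-1)(K_X+\Delta) \ = \ a(m-1)K_X + (m-1)E
\]
is a genuine Cartier divisor and $a(K_X+\Delta) + H = am(K_X+\Delta)$. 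The ambient assumption of Proposition \ref{prop:rs for F-pure curve} that $K_X+\Delta$ be nef is automatic, since $K_X+\Delta$ is ample in both subcases of the corollary.

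Next I would verify one of the two conditions (i), (ii) on $H$. A direct computation gives
\[
H + (a-j)K_X \ = \ (am-j)K_X + a(m-1)\Delta \qquad (j = 1,2),
\]
and on a curve ampleness (resp.\ nefness) of a $\mathbb{Q}$-divisor is equivalent to positivity (resp.\ nonnegativity) of its degree, so the verification reduces to bookkeeping of the sign of $(am-j)\deg K_X + a(m-1)\deg\Delta$.

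I would then split into cases by $g(X)$. If $X \cong \mathbb{P}^1$ then only the first subcase ($K_X+\Delta$ ample, $m \ge 2$) can occur, since $\deg K_{\mathbb{P}^1}=-2<0$ prevents $K_X$ from being ample; in that case $H = a(m-1)(K_X+\Delta)$ is itself ample, so hypothesis (ii) applies. If $g(X) \ge 1$ then $\deg K_X \ge 0$ and the coefficients $am-1$, $am-2$, $a(m-1)$ are all nonnegative under the hypotheses, so both degrees above are nonnegative; the strict positivity needed for ampleness of $H + (a-1)K_X$ comes either from $\deg(K_X+\Delta) > 0$ combined with $m \ge 2$ (first subcase) or from $\deg K_X > 0$ combined with $am - 1 \ge 1$ (second subcase).

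The main subtlety, and what fixes the thresholds $m \ge 2$ (resp.\ $m \ge 1$), is the boundary arithmetic. For instance, in the first subcase at $g(X)=1$, $a=1$, $m=2$, ampleness of $H + (a-1)K_X = K_X + \Delta$ relies on $\deg\Delta > 0$, which is forced precisely by ampleness of $K_X+\Delta$ in genus one; and in the second subcase at $m=1$, the divisor $H$ vanishes, so one directly invokes that $(a-1)K_X$ is ample (via $a \ge 2$) and $(a-2)K_X$ is nef. Once the correct $H$ is identified, each such verification is a one-line degree calculation, and the corollary then follows from Proposition \ref{prop:rs for F-pure curve}.
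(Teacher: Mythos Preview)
Your proposal is correct and follows essentially the same route as the paper's proof: set $H = a(m-1)(K_X+\Delta)$ and invoke Proposition~\ref{prop:rs for F-pure curve}, splitting into the cases $X\cong\mathbb{P}^1$ (hypothesis~(ii)) and $X\not\cong\mathbb{P}^1$ (hypothesis~(i)). The paper compresses all of this into a single sentence, noting only that a Gorenstein curve has nef dualizing sheaf unless it is $\mathbb{P}^1$; your write-up simply unpacks the degree bookkeeping that the paper leaves implicit.
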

\begin{proof} We note that a Gorenstein curve has nef dualizing sheaf unless it is isomorphic to $\mathbb P^1$. Hence the statement follows from the above proposition. \end{proof}
\begin{rem} \label{rem:rs for F-pure curve} 
$F$-pure singularities of curves are completely classified \cite{GW77}. For example, nodes are $F$-pure singularities, but cusps are not.
\end{rem}
We next study Frobenius stable canonical rings of varieties with semi-ample canonical bundles in any dimension.
For such varieties, Corollary \ref{cor:Iitaka fibrat} provides a criterion of the finiteness of the dimension of $R/R_S$ in terms of the singularity of the canonical models.
This is obtained as an application of Theorem \ref{thm:quasi can bdl formula}, which is a kind of canonical bundle formula. \\
In order to formulate the problem, we start with an observation of Iitaka fibrations.
\begin{obs}\label{obs:Iitaka fibrat}
Let $X$ be a normal projective variety, and let $\Delta$ be an effective $\Z_{(p)}$-Weil divisor on $X$ such that $K_X+\Delta$ is a semi-ample $\Q$-Cartier divisor.  Let $$f:X\to Y:=\Proj R(X,K_X+\Delta)$$ be the Iitaka fibration.  Then there exists an ample $\Q$-Cartier divisor $H$ on $Y$ satisfying $f^*H\sim K_X+\Delta$.  Let $Y_0\subseteq Y$ be an open subset such that $f_0:=f|_{X_0}:X_0\to Y_0$ is flat, where $X_0:=f^{-1}(Y_0)$. \\
(I) Assume that $R_S(X,\Delta,K_X+\Delta)\ne0$. Then there exists an integer $m>0$ such that $m\Delta$ is integral and $S^0(X,\Delta,m(K_X+\Delta))\ne0$.  This implies that $$S^0(X,\Delta,((m-1)p^{e'}+1)(K_X+\Delta))\ne0$$ for some $e'>0$ divisible enough. Since $p\nmid (m-1)p^{e'}+1$, there exists an $e>0$ such that $S^0(X,\O_X((p^e-1)(K_X+\Delta)))\ne0$.  
We set $R':=(1-p^e)(K_X+\Delta)$. Let $\eta$ be the generic point of $Y$.  
By the assumption, $\O_X(-R')|_{X_{\eta}}$ is a torsion line bundle on $X_{\eta}$ with nonzero global sections, and thus it is trivial.  
Hence $\O_X(R')|_{X_{\eta}}$ is also trivial, and $f_*\O_X(R')$ is a torsion free sheaf on $Y$ of rank one.  Then there exists an effective Weil divisor $B$ supported on $X\setminus X_0$ such that $f_*\O_X(R'+B)\cong\O_Y(S)$ for some Weil divisor $S$ on $Y$.  We set $R:=R'+B=(1-p^e)(K_X+\Delta)+B$.  Then \begin{align*} R=K_X+\Delta+B-p^e(K_X+\Delta)\sim_{\Q}K_X+\Delta-p^ef^*H. \end{align*} Replacing $e$, we may assume that $p^eH$ is $\Z_{(p)}$-Cartier, and thus there exists an integer $a>0$ not divisible by $p$ such that $a\Delta$ is integral and $H':=ap^eH$ is Cartier. Then we have $$ aR\sim a(K_X+\Delta)+aB-f^*H'.$$ 
\noindent (I\hspace{-1pt}I) In the situation of (I), after replacing $e$ by its multiple, we assume that $(p^e-1)(K_X+\Delta)$ is base point free. Then we may take $(p^e-1)H$ as Cartier. In this case we have $\O_X(R')\cong f^*\O_Y((1-p^e)H)$, and thus we may choose $B=0$, $R=R'$ and $S=(1-p^e)H$ by projection formula. In particular we have $R\sim f^*S$.
\end{obs}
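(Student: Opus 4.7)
My plan is to unpack the observation as a sequence of four constructions, driven by the hypothesis $R_S(X,\Delta,K_X+\Delta) \neq 0$. The rough shape is: (a) upgrade the nonvanishing of $S^0$ in some degree to an exponent of the form $p^e-1$; (b) use that $(K_X+\Delta)|_{X_\eta}$ becomes $\Q$-trivial on the generic fiber to conclude $\O_X(R')|_{X_\eta}$ is trivial as a line bundle; (c) package $f_*\O_X(R')$, after a boundary correction supported off the flat locus, as a reflexive rank-one sheaf on $Y$; and (d) read off the $\Q$-linear equivalences, with the obvious simplification when base-point-freeness is available.

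First I would fix a nonzero $\sigma \in S^0(X,\Delta,m(K_X+\Delta))$. Since $K_X+\Delta$ is semi-ample with Iitaka fibration $f$, the section ring $R(X,K_X+\Delta)$ contains nonzero elements in every sufficiently large degree; by Lemma~\ref{lem:ideal} $R_S$ is an ideal of $R(X,K_X+\Delta)$, so multiplying $\sigma$ by a suitable nonzero section of degree $(m-1)(p^{e'}-1)$ produces a nonzero element of $S^0(X,\Delta,((m-1)p^{e'}+1)(K_X+\Delta))$. The exponent $n:=(m-1)p^{e'}+1$ is coprime to $p$, so one may pick $e>0$ with $d\mid e$ and $n\mid p^e-1$ and take a suitable power of the section (using the $k$-algebra structure on $R(X,K_X+\Delta)$) to obtain a nonzero element of $H^0(X,(p^e-1)(K_X+\Delta))$. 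The compatibility of $\phi^{(de)}_{(X,\Delta)}$ with the inclusion $\L^{(de)}_{(X,\Delta)}\hookrightarrow\L^{(de)}_{(X,0)}$, built into the definition recalled in Subsection~\ref{subsection:trace}, then promotes this to a nonzero element of $S^0(X,(p^e-1)(K_X+\Delta))$.

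Next I would set $R':=(1-p^e)(K_X+\Delta)$. Since $f^*H\sim_{\Q} K_X+\Delta$ and $H$ is pulled from $Y$, the restriction $(K_X+\Delta)|_{X_\eta}$ is $\Q$-linearly trivial, so $\O_X(R')|_{X_\eta}$ is a torsion line bundle; combined with the restriction of the nonzero global section we just produced it must be trivial. Consequently $f_*\O_X(R')$ is a torsion-free rank-one sheaf on the normal variety $Y$, and therefore embeds into a rank-one reflexive sheaf $\O_Y(S)$ for some Weil divisor $S$. The cokernel is supported in pure codimension one, and triviality of $\O_X(R')|_{X_\eta}$ forces this support to lie in $Y\setminus Y_0$; pulling back a divisor presenting the cokernel yields an effective Weil divisor $B\subseteq f^{-1}(Y\setminus Y_0)=X\setminus X_0$ with $f_*\O_X(R'+B)\cong\O_Y(S)$.

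Setting $R:=R'+B$, a direct computation gives $R\sim_{\Q}(K_X+\Delta)+B-p^ef^*H$. After enlarging $e$ so that $p^eH$ is $\Z_{(p)}$-Cartier and choosing $a>0$ coprime to $p$ with $a\Delta$ integral and $H':=ap^eH$ Cartier, clearing denominators yields $aR\sim a(K_X+\Delta)+aB-f^*H'$, completing (I). For (II) I would further enlarge $e$ so that $(p^e-1)(K_X+\Delta)$ is base point free; then $(p^e-1)(K_X+\Delta)\sim f^*((p^e-1)H)$ integrally and the natural map $f^*f_*\O_X(R')\to\O_X(R')$ is an isomorphism by the projection formula and $f_*\O_X\cong\O_Y$, so no boundary is needed: one may take $B=0$, $R=R'$, $S=(1-p^e)H$, and $R\sim f^*S$. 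The main obstacle is in the first step, where transferring $S^0$-nonvanishing from the pair $(X,\Delta)$ to the trivial pair requires unwinding the inductive definition of $\phi^{(de)}_{(X,\Delta)}$ to control its image against that of $\phi^{(de)}_X$; a secondary delicacy is verifying that $B$ can indeed be taken with support disjoint from $X_0$ rather than spread across $X$.
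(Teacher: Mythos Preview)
Your outline for (a), (b), and (d) is sound and matches the paper's inline reasoning: the ideal property of $R_S$ (Lemma~\ref{lem:ideal}) together with the inclusion $\L^{(de)}_{(X,\Delta)}\hookrightarrow\L^{(de)}_{(X,0)}$ does exactly what you say, and part~(II) is just the projection formula once $(p^e-1)(K_X+\Delta)$ is globally generated.

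The gap is in step~(c), the construction of $B$. Two of your assertions are false. First, the cokernel of the inclusion of a torsion-free rank-one sheaf into its reflexive hull on a normal variety is supported in codimension $\ge 2$, not pure codimension one: at every codimension-one point the local ring is a DVR, over which torsion-free already means free. Second, and more seriously, your proposed $B$ is the pullback of a divisor from $Y$; but for $B=f^*D$ the projection formula gives $f_*\O_X(R'+f^*D)\cong f_*\O_X(R')\otimes\O_Y(D)$, and tensoring with a line bundle cannot turn a non-reflexive sheaf into a reflexive one. So this $B$ never repairs anything. The correct mechanism is different: reflexivity of $f_*\O_X(R')$ fails precisely because for $Z\subset Y$ of codimension $\ge2$ the preimage $f^{-1}(Z)$ can acquire codimension-one components, and these are exactly the $f$-exceptional prime divisors $E_1,\dots,E_r$. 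Since $f_0$ is flat, hence equidimensional, each $E_i$ lies in $X\setminus X_0$. Taking $B=\sum c_iE_i$ with $c_i\gg0$ one checks directly that $f_*\O_X(R'+B)$ satisfies the $S_2$ extension property and therefore equals the reflexive hull $(f_*\O_X(R'))^{**}=\O_Y(S)$. (Concretely: sections of $\O_Y(S)$ over $U$ already lie in $\O_X(R')$ over $f^{-1}(U)\setminus\bigcup E_i$, and allowing large enough pole order along the finitely many $E_i$ lets them extend; conversely any section of $\O_X(R'+B)$ restricts over $Y\setminus\bigcup f(E_i)$ to a section of $f_*\O_X(R')$, hence of $\O_Y(S)$, and then extends by reflexivity.) This is the ``secondary delicacy'' you flagged, and it is the one place where your proposed argument actually breaks.
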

In a more general situation than the above, we prove the following theorem which is a kind of canonical bundle formula (see \cite[Theorem B]{DS15} for a related result). 
\begin{thm}\label{thm:quasi can bdl formula}
Let $f:X\to Y$ be a fibration between normal varieties, let $\Delta$ be an effective $\Q$-Weil divisor on $X$ such that $a\Delta$ is integral for some integer $a>0$ not divisible by $p$, and let $Y_0$ be a smooth open subset of $Y$ such that $\codim Y\setminus Y_0\ge2$ and $f_0:=f|_{X_0}:X_0\to Y_0$ is flat, where $X_0:=f^{-1}(Y_0)$.  Further assume that the following conditions: 
\begin{itemize} 
\item[(i)]$(X_{\ol \eta},\Delta_{\ol \eta})$ is globally $F$-split where ${\ol \eta}$ is geometric generic point of $Y$. 
\item[(ii)] There exists a Weil divisor $R$ on $X$, such that $f_*\O_X(R)\cong\O_Y(S)$ for some Weil divisor $S$ on $Y$ and $aR\sim a(K_X+\Delta)+B-f^*C$ for some effective Weil divisor $B$ supported on $X\setminus X_0$ and for some Cartier divisor $C$ on $Y$. 
\end{itemize}
Then, there exists an effective $\Q$-Weil divisor $\Delta_Y$ on $Y$, which satisfies the following conditions: 
\begin{itemize} 
\item[(1)] $a'\Delta_Y$ is integral for some integer $a'>0$ divisible by $a$ but not by $p$, and $$ \O_Y(a'(K_Y+\Delta_Y-S))\cong \O_Y(a'a^{-1}C)\cong f_*\O_X(a'(K_X+\Delta+a^{-1}B-R)).$$ 
\item[(2)]For every effective Weil divisor $B'$ supported on $X\setminus X_0$ and for every Cartier divisor $D$ on $Y$, $$S^0(X,\Delta,B'+f^*D+R)\cong S^0(Y,\Delta_Y,D+S).$$ 
\item[(3)]If $f$ is a birational morphism, then $\Delta_Y=f_*\Delta$.  
\item[(4)]Suppose that $X_0$ is Gorenstein and $R|_{X_0}$ is Cartier. Let $\Gamma$ be an effective Cartier divisor on $X_0$ defined by the image of the natural morphism $$ \O_{X_0}(-R|_{X_0})\otimes{f_0}^*({f_0}_*\O_{X_0}(R|_{X_0}))\to \O_{X_0},$$ and let $y$ be a point of $Y_0$. Then the following conditions are equivalent: \begin{itemize} \item[(a)]${\rm Supp\;}\Delta$ does not contain any irreducible component of $f^{-1}(y)$, and $(X_{\ol y},\Delta_{\ol y})$ is globally $F$-split, where $\ol y$ is the algebraic closure of $y$.  \item[(b)]$y$ is not contained in $f({\rm Supp\;} \Gamma)\cup {\rm Supp\;} \Delta_Y$.  \end{itemize} \end{itemize} \end{thm}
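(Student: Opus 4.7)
The plan is to construct $\Delta_Y$ as the divisor of zeros (divided by $p^e-1$) of a section on $Y$ obtained by pushing forward the Frobenius trace $\phi^{(e)}_{(X,\Delta)}$ along $f$. Assumption (ii) provides the linear equivalence needed to interpret the pushed-forward trace as a Frobenius-type trace on $Y$, and assumption (i) ensures the resulting section is nonzero.

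Fix $e>0$ with $a\mid p^e-1$ and set $n_e=(p^e-1)/a$. By (ii), $(1-p^e)(K_X+\Delta)\sim -(p^e-1)R+n_eB-n_ef^*C$, so twisting $\phi^{(e)}_{(X,\Delta)}$ by $\O_X(R)$ yields a morphism $F^{e}_{X*}\O_X(R+n_eB-n_ef^*C)\to\O_X(R)$. Applying $f_*$, the projection formula for $F_Y^e$ and for $f$, and the crucial observation that the inclusion $f_*\O_X(R)\hookrightarrow f_*\O_X(R+n_eB)$ is an isomorphism (both are $S_2$ sheaves on $Y$, agree over $Y_0$, and $\codim Y\setminus Y_0\ge2$), one obtains a morphism $\psi_e\colon F^{e}_{Y*}\O_Y(S-n_eC)\to\O_Y(S)$. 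Grothendieck duality for $F^e_Y$ identifies $\psi_e$ with a section $s_e\in H^0(Y,\O_Y((p^e-1)(S-K_Y)+n_eC))$, and $s_e\ne0$ because its restriction to the geometric generic point $\ol\eta$ recovers the splitting $\phi^{(e)}_{(X_{\ol\eta},\Delta_{\ol\eta})}$ guaranteed by (i). Define $\Delta_Y:=G_e/(p^e-1)$, where $G_e$ is the effective divisor of zeros of $s_e$. Then $\Delta_Y$ is an effective $\Q$-Weil divisor with $(p^e-1)(K_Y+\Delta_Y-S)\sim n_eC$, establishing (1) with $a'=p^e-1$; the second isomorphism in (1) follows from the projection formula together with the linear equivalence $a(K_X+\Delta+a^{-1}B-R)\sim f^*C$. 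For (2), one repeats the same pushforward with $\O_X(R)$ replaced by $\O_X(B'+f^*D+R)$, using the analogous $S_2$-extension argument to identify $f_*\bigl(\phi^{(e)}_{(X,\Delta)}\otimes\O_X(B'+f^*D+R)\bigr)$ with $\phi^{(e)}_{(Y,\Delta_Y)}\otimes\O_Y(S+D)$; taking $H^0$ and intersecting over $e$ then gives the $S^0$ identification. For (3), if $f$ is birational one may take $R=K_X+\Delta$, $B=0$, $C=0$, $S=K_Y+f_*\Delta$, and a direct computation of $G_e$ shows $\Delta_Y=f_*\Delta$.

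The main obstacle is property (4), which requires a fiberwise analysis at a specified point $y\in Y_0$. The plan is to use flatness of $f_0$ and the Gorenstein hypotheses to base-change $\psi_e$ to the Frobenius trace of $(X_{\ol y},\Delta_{\ol y})$, so that the condition $s_e(y)\ne0$ (equivalently $y\notin{\rm Supp}\,\Delta_Y$) becomes equivalent to surjectivity of $\phi^{(e)}_{(X_{\ol y},\Delta_{\ol y})}$, i.e., the global $F$-splitting of the pair on the fiber. This base-change identification, however, is only valid when $R|_{f^{-1}(y)}\sim f^*S|_{f^{-1}(y)}$, and $\Gamma$ is exactly the obstruction to this: since $(f_0)_*\O_{X_0}(R|_{X_0})\cong\O_{Y_0}(S|_{Y_0})$, the natural adjunction morphism in the definition of $\Gamma$ corresponds to an effective divisor with $R|_{X_0}\sim f_0^*S|_{Y_0}+\Gamma$, so $y\in f({\rm Supp}\,\Gamma)$ is precisely the locus where $R$ departs from $f^*S$ along the fiber. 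The other condition, that ${\rm Supp}\,\Delta$ contains no irreducible component of $f^{-1}(y)$, is needed so that $(X_{\ol y},\Delta_{\ol y})$ is a well-defined pair to which the notion of global $F$-splitting applies. The technical core is therefore to track the Frobenius trace $\phi^{(e)}_{(X,\Delta)}$ through base change to $\bar y$, verifying that under the two hypotheses in (a) its restriction coincides with $\phi^{(e)}_{(X_{\ol y},\Delta_{\ol y})}$, and to reconcile the resulting sheaf identifications with the construction of $s_e$ via flat base change.
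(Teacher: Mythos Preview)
Your construction of $\Delta_Y$ as the zero divisor of the pushed-forward trace, and your outlines for (1) and (2), are essentially the paper's approach; the Grothendieck-duality identification of $\psi_e$ with a section $s_e$ is equivalent to the paper's explicit factorization $F_X^d=(F_Y^d)_X\circ F_{X/Y}^{(d)}$ carried out in Step~2. Two points need attention. First, you fix a single $e$ to set $\Delta_Y=G_e/(p^e-1)$, but for (2) the identification $f_*(\phi^{(e')}_{(X,\Delta)}\otimes\cdots)\cong\phi^{(e')}_{(Y,\Delta_Y)}\otimes\cdots$ must hold for \emph{every} admissible $e'$ before intersecting; this forces $G_{e'}/(p^{e'}-1)=G_e/(p^e-1)$ for all $e'$, which the paper verifies explicitly via the recursion $E^{(de)}=p^dE^{(d(e-1))}+E^{(d)}$ in Step~1 and you have not addressed. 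Second, in (3) the data $R,S,B,C$ are given by hypothesis~(ii), not free to be chosen; the correct one-line argument is that $f_0$, being flat and birational, is an isomorphism, so over $Y_0$ the relative Frobenius is the identity and the construction literally recovers the inclusion $\O_{Y_0}((1-p^d)f_*\Delta)\hookrightarrow\O_{Y_0}$, whence $\Delta_Y=f_*\Delta$.

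The more substantial gap is in (4). You propose to base-change $\psi_e$, built from the \emph{absolute} trace $\phi^{(e)}_{(X,\Delta)}$, to the fiber over $\bar y$; but the absolute Frobenius of $X$ does not restrict to that of $X_{\bar y}$ under $\bar y\to Y$. The paper instead works with the \emph{relative} trace $\phi^{(de)}_{(X,\Delta)/Y}$ and its pushforward $\theta^{(de)}$, whose formation does commute with base change (Observation~\ref{obs:lower semi conti}), and it is the vanishing locus of $\theta^{(d)}$ that defines $\Delta_Y$. More critically, for (a)$\Rightarrow$(b) one must first verify that $\dim_{k(y)}H^0(X_y,\O_X(R)|_{X_y})=1$ so that the base-change identification (Observation~\ref{obs:lower semi conti}~(III)) applies at $y$; this is not immediate from the hypotheses in (a), and the paper obtains it by specializing to a general closed point $y'\in\overline{\{y\}}$, combining lower semicontinuity of $\dim S^0$ on fibers (Observation~\ref{obs:lower semi conti}~(IV)) with upper semicontinuity of $h^0$ to force $H^0(X_{y'},\O_{X_{y'}})\cong k$. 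Your sketch does not contain this step, and without it the implication (a)$\Rightarrow$(b) is not established.
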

Note that if $R$ is linearly equivalent to the pullback of a Cartier divisor on $Y$, then replacing $C$, we may assume that $R=0$, $S=0$ and $\Gamma=0$.\par
For varieties with semi-ample canonical bundles, Corollary \ref{cor:Iitaka fibrat} provides a criterion of the finiteness of the dimension of $R/R_S$ in terms of the singularity of the canonical models. As explained after Notation \ref{notation:r/rs}, the finiteness of $R/R_S$ is equivalent to the assumption (ii) of the main theorem (Theorems \ref{thm:main thm intro} or \ref{thm:main thm}). We remark that for such varieties, the assumption (i) of the main theorem, that is the finitely generation of canonical rings, is always satisfied.
\begin{cor}\label{cor:Iitaka fibrat} 
In the situation of Observation \ref{obs:Iitaka fibrat} $(\mathrm{I})$, 
\begin{itemize} \item[(1)] $(X_{\ol\eta},\Delta_{\ol\eta})$ is globally $F$-split, where $\ol\eta$ is the geometric generic point of $Y$. In particular, $f$ is separable.  
\item[(2)] Let $\Delta_Y$ be as in Theorem $\ref{thm:quasi can bdl formula}$. In the situation of Observation \ref{obs:Iitaka fibrat} $(\mathrm{I\hspace{-1pt}I})$ $(${\rm i.e.} $l(K_X+\Delta)$ is Cartier for an integer $l>0$ not divisible by $p$$)$, $R/R_S(X,\Delta,K_X+\Delta)$ is a finite dimensional $k$-vector space if and only if $(Y,\Delta_Y)$ is $F$-pure.
\end{itemize} \end{cor}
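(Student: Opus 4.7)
The plan is to reduce each part to results in Section~\ref{section:rs}: pass to the geometric generic fiber for (1), then transport everything across the Iitaka fibration via Theorem~\ref{thm:quasi can bdl formula} for (2).

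For (1), Observation~\ref{obs:Iitaka fibrat}~(I) together with the ideal property of $R_S$ (Lemma~\ref{lem:ideal}) already produces some $e>0$ with $a\mid p^e-1$ and a nonzero section $s\in S^0(X,\Delta,(p^e-1)(K_X+\Delta))$. Since $K_X+\Delta\sim_{\mathbb Q} f^{*}H$, the divisor $(p^e-1)(K_X+\Delta)|_{X_{\bar\eta}}$ is a torsion Weil divisor; the restricted section $s|_{X_{\bar\eta}}$ is nonzero (a nonzero section of an invertible sheaf is non-vanishing on the dense generic fiber), so this torsion class is trivial and $\O_{X_{\bar\eta}}((p^e-1)(K_X+\Delta))\cong\O_{X_{\bar\eta}}$. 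Flat base-change compatibility of the Frobenius trace, available through the relative trace $\phi^{(de)}_{(X,\Delta)/Y}$ of Subsection~\ref{subsection:trace_rel} restricted over $Y_0$ and then to $\bar\eta$, places $s|_{X_{\bar\eta}}$ in $S^0(X_{\bar\eta},\Delta_{\bar\eta},\O_{X_{\bar\eta}})$. Hence this $S^0$ is nonzero, so equal to $H^0(X_{\bar\eta},\O_{X_{\bar\eta}})$, and $(X_{\bar\eta},\Delta_{\bar\eta})$ is globally $F$-split by Example~\ref{eg:gl F-sp}; separability of $f$ then follows since $F$-pure schemes are reduced.

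For (2), (1) supplies assumption (i) of Theorem~\ref{thm:quasi can bdl formula}. In Observation~\ref{obs:Iitaka fibrat}~(II) we have $R=(1-p^e)(K_X+\Delta)$ Cartier, $R\sim f^{*}S$ with $S=(1-p^e)H$ Cartier, and $B=0$; setting $H':=(p^e-1)H$ gives $H'$ ample Cartier on $Y$ and $(p^e-1)(K_X+\Delta)\sim f^{*}H'$. I then pass to the $(p^e-1)$-th Veronese: the projection formula and $f_*\O_X=\O_Y$ yield
\[ R(X,K_X+\Delta)^{(p^e-1)}\;\cong\;R(Y,H'), \]
while Theorem~\ref{thm:quasi can bdl formula}~(2) applied with $B'=0$ and $D=(j+1)H'$ (so $B'+f^{*}D+R\sim jf^{*}H'$ and $D+S=jH'$) provides compatible isomorphisms $S^0(X,\Delta,jf^{*}H')\cong S^0(Y,\Delta_Y,jH')$ for all $j\ge0$; these sum to $R_S(X,\Delta,K_X+\Delta)^{(p^e-1)}\cong R_S(Y,\Delta_Y,H')$. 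Lemma~\ref{lem:fg}~(2) reduces finite-dimensionality of $R/R_S(X,\Delta,K_X+\Delta)$ to that of this Veronese quotient, i.e.\ of $R/R_S(Y,\Delta_Y,H')$. Since $S$ is Cartier and Theorem~\ref{thm:quasi can bdl formula}~(1) makes $a'(K_Y+\Delta_Y-S)$ Cartier for some $a'>0$ not divisible by $p$, the divisor $(p^c-1)(K_Y+\Delta_Y)$ is Cartier for $c$ divisible enough; Example~\ref{eg:rs for ample div} applied on $Y$ to the ample Cartier divisor $H'$ and the pair $(Y,\Delta_Y)$ then equates this finiteness with the $F$-purity of $(Y,\Delta_Y)$.

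The main technical point is the base-change step in (1): verifying that $\phi^{(de)}_{(X,\Delta)}$ restricts correctly to $\phi^{(de)}_{(X_{\bar\eta},\Delta_{\bar\eta})}$, so a Frobenius-stable section on $X$ restricts to a Frobenius-stable section on $X_{\bar\eta}$. The relative-trace formalism of Subsection~\ref{subsection:trace_rel} is built for exactly this purpose but requires care about the flat/Gorenstein locus $X_0\to Y_0$. Once (1) is in place, (2) is essentially bookkeeping via Theorem~\ref{thm:quasi can bdl formula}, Lemma~\ref{lem:fg}~(2), and Example~\ref{eg:rs for ample div}.
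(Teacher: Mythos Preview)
Your proposal is correct and follows essentially the same route as the paper. For part (2) the arguments are virtually identical: both pass to a Veronese subring (you use degree $p^e-1$, the paper uses a base-point-free multiple $l$), identify the graded pieces on $X$ and $Y$ via Theorem~\ref{thm:quasi can bdl formula}~(2), and finish with Lemma~\ref{lem:fg}~(2) and Example~\ref{eg:rs for ample div}.

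For part (1) the underlying mechanism is the same---both arguments rest on the factorization of the absolute trace through the relative trace (the ``Step~2'' computation in the proof of Theorem~\ref{thm:quasi can bdl formula})---but the packaging differs slightly. You phrase it as ``the Frobenius-stable section $s$ restricts to a Frobenius-stable section on $X_{\bar\eta}$,'' while the paper phrases it as ``$\psi^{(e)}$ is nonzero, hence $\theta^{(e)}$ is nonzero, hence $\theta^{(e)}\otimes k(\bar\eta)$ is nonzero.'' The paper's formulation is marginally cleaner because it works at the level of maps between rank-one sheaves on $Y$ (so nonzero implies generically nonzero is immediate) rather than tracking how a specific preimage $t_g$ of $s$ behaves under restriction. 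Your version is correct but requires exactly the factorization you flag as the ``main technical point.'' One small thing you gloss over that the paper handles explicitly: in Observation~\ref{obs:Iitaka fibrat}~(I) the divisor $R$ differs from $(1-p^e)(K_X+\Delta)$ by the boundary term $B$, and the paper inserts a short step (multiplying into $S^0(X,\Delta,(l-1)(p^d-1)(K_X+\Delta)+B)$) to align the twist with $R$ before invoking $\theta^{(e)}$; your direct restriction avoids this because $B$ vanishes on the generic fiber, which is fine but worth noting.
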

Before the proof of Theorem \ref{thm:quasi can bdl formula} and Corollary \ref{cor:Iitaka fibrat}, we observe morphisms induced by the push-forward of the trace map of the relative Frobenius morphism. This observation will be also referred in the proof of Theorem \ref{thm:criterion}.
\begin{obs}\label{obs:lower semi conti}
Let $f:X\to Y$ be a projective morphism from a Gorenstein variety $X$ to a smooth variety $Y$. Let $\Delta=E/a$ be an effective $\Z_{(p)}$-AC divisor on $X$ whose support does not contain any irreducible component of any fiber of $f$. Let $d$ be the smallest positive integer satisfying $a|(p^d-1)$. Let $e\ge0$ be an integer.\\
(I) For every $y\in Y$, we have the following diagram: $$ \xymatrix { (X_{\ol y})^{de}& X^{de}_{\ol y^{de}} \ar[r] \ar[d]_(0.5){F_{X_{\ol y}/{\ol y}}^{(de)}} \ar@{=}[l]& X^{de} \ar[d]_(0.5){F_{X/Y}^{(de)}} \ar@/^8mm/[dd]^(0.3){f^{(de)}}& \\ & X_{\ol y^{de}} \ar[r] \ar[d] & X_{Y^{de}} \ar[d]_{f_{Y^{de}}} & \\ & \ol y^{de} \ar[r] & Y^{de} & \\ }$$ Let $R$ be a Cartier divisor on $X$. We denote by $\theta^{(de)}$ the morphism $${f_{Y^{de}}}_*(\phi^{(de)}_{(X,\Delta)/Y}\otimes\O_{X}(R)_{Y^{de}}):{f^{(de)}}_*\L^{(de)}_{(X,\Delta)/Y}(p^{de}R)\to {f_{Y^{de}}}_*\O_{X_{Y^{de}}}(R)_{Y^{de}}.$$ Here we recall that $\L^{(de)}_{(X,\Delta)/Y}:=\O_{X^{de}}((1-p^{de})(K_{X^{de}/Y^{de}}+\Delta))$. \\
(I\hspace{-1pt}I) Let $Y_0\subseteq Y$ be an open subset such that $f_0:=f|_{X_0}:X_0\to Y_0$ is flat, where $X_0:=f^{-1}(Y_0)$. 
Assume that $y\in Y_0$ and that $E|_{Y_0}$ is Cartier. Since $f_0$ is a Gorenstein morphism, $X_{\ol y}$ is Gorenstein.
Set $\Delta_{\ol y}=E|_{X_{\ol y}}/a$.  Then $\L^{(de)}_{(X,\Delta)/Y}|_{(X_{\ol y})^{de}}\cong\L^{(de)}_{(X_{\ol y}/\ol y,\Delta_{\ol y})}$ and we have the following diagram of $k(\ol y^{de})$-vector spaces for every $e>0$: $$ \xymatrix@R=60pt@C=30pt{ H^0((X_{\ol y})^{de},\L^{(de)}_{(X_{\ol y}/\ol y,\Delta_{\ol y})}\otimes \O_X(p^{de}R)|_{(X_{\ol y})^{de}}) \ar[d]|{H^0(X_{\ol y^{de}},\phi^{(de)}_{(X_{\ol y}/{\ol y},\Delta_{\ol y})}\otimes\O_X(p^{de}R)|_{X_{\ol y^{de}}})} & \left({f^{(de)}}_* \L^{(de)}_{(X,\Delta)/Y}(p^{de}R) \right)\otimes k({\ol y^{de}}) \ar[l] \ar[d]|(0.55){{\theta^{(de)}}\otimes k(\ol y^{de})}  \\ H^0(X_{\ol y^{de}},\O_X(R)|_{X_{\ol y^{de}}}) & \left( {f_{Y^{de}}}_*\O_X(R)_{Y^{de}}\right)\otimes k(\ol y^{de}) \ar[l] }$$ 
(I\hspace{-1pt}I\hspace{-1pt}I) Let $Y_1\subseteq Y_0$ be an open subset such that $\dim H^0(X_y,\O_X(R)|_{X_{y}})$ is a constant function on $Y_1$ with value $h$.  
If $y\in Y_1$, then the horizontal morphisms in the above diagram are isomorphisms by \cite[Corllary12.9]{Har77}. Hence for every $e>0$ we have \begin{align*} &\dim_{k(\ol y)}\im(H^0(X_{\ol y},\phi^{(de)}_{(X_{\ol y},\Delta_{\ol y})}\otimes\O_X(R)|_{X_{\ol y}})) \\ =&\dim_{k(\ol y^{de})}\im(H^0(X_{\ol y^{de}},\phi^{(de)}_{(X_{\ol y}/\ol y,\Delta_{\ol y})}\otimes\O_X(R)|_{X_{\ol y^{de}}})) \\ =& \dim_{k({\ol y^{de}})}\im({\theta^{(de)}}\otimes k({\ol y^{de}}))\\ =& h-\dim_{k(\ol y^{de})}\coker\left(\theta^{(de)}\otimes k(\ol y^{de})\right) \\ =& h-\dim_{k(\ol y^{de})}\left(\coker(\theta^{(de)})\right)\otimes k(\ol y^{de}).  \end{align*} Here, the last equality follows from the right exactness of the tensor functor. \\
(I\hspace{-1pt}V) Assume that $(p^d-1)(K_{X/Y}+\Delta-R)|_{X_1}\sim f_1^*C$ for some Cartier divisor $C$ on $Y_1$, where $X_1:=f^{-1}(Y_1)$ and $f_1:=f|_{X_1}:X_1\to Y_1$. Then $$\L^{(de)}_{(X_{\ol y},\Delta_{\ol y})}\otimes\O_{X^{de}}(p^{de}R)|_{(X_{\ol y})^{de}}\cong\O_{X^{de}}(R)|_{(X_{\ol y})^{de}}$$ for every $y\in Y_1$.  Thus we can regard $H^0(X_{\ol y},\phi^{(de)}_{(X_{\ol y},\Delta_{\ol y})}\otimes\O_X(R)|_{X_{\ol y}})$ as the $e$-th iteration of the ($p^{-d}$-linear) morphism $$\tau:=H^0(X_{\ol y},\phi^{(d)}_{(X_{\ol y},\Delta_{\ol y})}\otimes\O_X(R)|_{X_{\ol y}}):H^0(X_{\ol y},\O_X(R)|_{X_{\ol y}})\to H^0(X_{\ol y},\O_X(R)|_{X_{\ol y}}).$$ If $e\ge h$, then $\im(\tau^e)=\im(\tau^{h})$, and thus  $$\im(H^0(X_{\ol y},\phi^{(de)}_{(X_{\ol y},\Delta_{\ol y})}\otimes\O_X(R)|_{X_{\ol y}}))=S^0(X_{\ol y},\Delta_{\ol y},\O_X(R)|_{X_{\ol y}}).$$ Hence by (3), we see that $$ \dim_{k(\ol y)} S^0(X_{\ol y},\Delta_{\ol y},\O_X(R)|_{X_{\ol y}})=h-\dim_{k(\ol y^{de})}\left(\coker(\theta^{(de)})\right)\otimes k(\ol y^{de}). $$ In particular, since the function $\dim_{k(\ol y^{de})}(\coker(\theta^{(de)}))\otimes k(y^{de})$ on $Y^{de}$ is upper semicontinuous, the function $\dim_{k(\ol y)} S^0(X_{\ol y},\Delta_{\ol y},\O_X(R)|_{X_{\ol y}})$ on $Y_1$ is lower semicontinuous. 
\end{obs}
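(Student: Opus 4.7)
The plan is to establish the four parts of Observation \ref{obs:lower semi conti} in order, with the main payoff being the lower semicontinuity statement at the end of (IV). The conceptual principle throughout is that the relative Frobenius trace $\phi^{(de)}_{(X,\Delta)/Y}$ is compatible with base change to geometric fibers, so that once we pass to the open locus $Y_1$ where cohomology commutes with base change, a global cokernel computation on $Y^{de}$ controls a pointwise Frobenius iteration on each fiber.

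For (I) and (II), I would build the tower of fiber squares by iterating the standard factorization $F_X^{de} = (F_Y^{de})_X \circ F^{(de)}_{X/Y}$ together with its base changes under powers of $F_Y^d$, and then define $\theta^{(de)}$ as the pushforward ${f_{Y^{de}}}_*(\phi^{(de)}_{(X,\Delta)/Y} \otimes \O_X(R)_{Y^{de}})$. Because $Y_0$ is smooth and $f_0$ is flat, $f_0$ is a Gorenstein morphism, so $\o_{X/Y}|_{X_0}$ restricts to $\o_{X_{\ol y}}$ on each geometric fiber; together with the hypothesis that $\mathrm{Supp}\,\Delta$ contains no fiber component, this yields the identification $\L^{(de)}_{(X,\Delta)/Y}|_{(X_{\ol y})^{de}} \cong \L^{(de)}_{(X_{\ol y}/\ol y, \Delta_{\ol y})}$ and commutativity of the resulting square of $k(\ol y^{de})$-vector spaces. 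For (III), on $Y_1$ where $\dim H^0(X_y, \O_X(R)|_{X_y}) = h$ is constant, Grauert's theorem (Hartshorne III.12.9) turns both horizontal arrows into base-change isomorphisms, so the image dimension of the fiberwise Frobenius trace equals $\dim \im(\theta^{(de)} \otimes k(\ol y^{de}))$. A rank-nullity count converts this into $h - \dim \coker(\theta^{(de)} \otimes k(\ol y^{de}))$, and right exactness of $-\otimes k(\ol y^{de})$ pulls the cokernel outside to give $h - \dim(\coker \theta^{(de)}) \otimes k(\ol y^{de})$.

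For (IV), the hypothesis $(p^d-1)(K_{X/Y}+\Delta-R)|_{X_1} \sim f_1^* C$ trivializes the line-bundle twist along fibers over $Y_1$, so the maps $H^0(X_{\ol y}, \phi^{(de)}_{(X_{\ol y},\Delta_{\ol y})} \otimes \O_X(R)|_{X_{\ol y}})$ are the $e$-th iterates of a single $p^{-d}$-linear endomorphism $\tau$ of the $h$-dimensional space $H^0(X_{\ol y}, \O_X(R)|_{X_{\ol y}})$. The descending chain $\im(\tau) \supseteq \im(\tau^2) \supseteq \cdots$ must stabilize by step $h$ at $S^0(X_{\ol y}, \Delta_{\ol y}, \O_X(R)|_{X_{\ol y}})$, combining with (III) to give the stated equality; lower semicontinuity on $Y_1$ is then immediate from upper semicontinuity of the fiber dimension of the coherent sheaf $\coker(\theta^{(de)})$ on $Y^{de}$. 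The main technical obstacle lies in (I)--(II): verifying that the iterated composition defining $\phi^{(de)}_{(X,\Delta)/Y}$ restricts to the fiberwise $\phi^{(de)}_{(X_{\ol y}, \Delta_{\ol y})}$ requires unwinding each stage against base change to $X_{\ol y^{de}}$ and using the Gorenstein structure of $f_0$ so that the relative dualizing sheaves match; the assumption on $\mathrm{Supp}\,\Delta$ is essential precisely to make $E|_{X_{\ol y}}$ a well-defined effective Cartier divisor on the Gorenstein fiber, which is what allows the fiberwise construction to be carried out.
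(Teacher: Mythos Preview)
Your proposal is correct and follows essentially the same approach as the paper: the observation itself carries its justification inline (base-change compatibility of the relative Frobenius trace, Grauert's theorem \cite[Corollary~12.9]{Har77} on $Y_1$, right exactness of tensor for the cokernel identity, and stabilization of the descending image chain of a $p^{-d}$-linear endomorphism of an $h$-dimensional space), and you have accurately identified each of these ingredients together with the role of the Gorenstein hypothesis on $f_0$ and the support condition on $\Delta$.
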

\begin{proof}[Proof of Theorem $\ref{thm:quasi can bdl formula}$] Let $d>0$ be an integer such that $a|(p^d-1)$. \\
\tb{Step1.} We define $\Delta_Y$ and we show that this is independent of the choice of $d$. We first note that, for each $e\ge0$ there exist isomorphisms \begin{align*} & f_*\O_{X}((1-p^{de})(K_{X}+\Delta)+p^{de}R)\\ \cong & f_*\O_{X}((1-p^{de})(K_{X}+\Delta+a^{-1}B-R)+(p^{de}-1)a^{-1}B+R)\\ \cong &\O_{Y}((1-p^{de})a^{-1}C)\otimes f_*\O_{X}((p^{de}-1)a^{-1}B+R) \\\cong & \O_{Y}((1-p^{de})a^{-1}C)\otimes f_*\O_{X}(R) \\ \cong & \O_{Y}((1-p^{de})a^{-1}C+S).\end{align*} 
Since $Y$ is normal, to define $\Delta_Y$ we may assume $Y=Y_0$ and $X$ is smooth. Then for each $e>0$ we have \begin{align*}f^{(de)}_*\L^{(de)}_{(X,\Delta)/Y}(p^{de}R)&\cong\O_{Y^{de}}((1-p^{de})(a^{-1}C-K_{Y^{de}})+S)\quad\textup{and}\\{f_{Y^{de}}}_*\O_{X_Y^{de}}(R_{Y^{de}})&\cong{F_Y^{de}}^*f_*\O_X(R)\cong\O_{Y^{de}}(p^{de}S), \end{align*} thus \begin{align*} \theta^{(de)}:={f_{Y^{de}}}_*\phi^{(de)}_{(X,\Delta)/Y}\otimes\O_{X_{Y^{de}}}(R_{Y^{de}}):f^{(de)}_*\L^{(de)}_{(X,\Delta)/Y}(p^{de} R)\to {f_{Y^{de}}}_*\O_{X_{Y^{de}}}(R_{Y^{de}}) \end{align*} is a homomorphism between line bundles. %
By the assumption of the global $F$-splitting of $(X_{\ol\eta},\Delta_{\ol\eta})$, we see that the left vertical morphism of the diagram in Observation \ref{obs:lower semi conti} (I\hspace{-1pt}I) (for $y=\eta$) is surjective, and hence by Observation \ref{obs:lower semi conti} (I\hspace{-1pt}I\hspace{-1pt}I) $\theta^{(de)}$ is generically surjective for every $e>0$. 
Thus $\theta^{(de)}$ defines an effective Cartier divisor $E^{(de)}$ on $Y$. Then for every $e>1$ we have $E^{(de)}=p^dE^{(d(e-1))}+E^{(d)}$, because relations between morphisms%
\begin{align*} \theta^{(de)}:=&{f_{Y^{de}}}_*\phi^{(de)}_{(X,\Delta)/Y}\otimes\O_{X_{Y^{de}}}(R_{Y^{de}})\\ =&{f_{Y^{de}}}_*\left(\phi^{(d(e-1))}_{(X,\Delta)/Y}\otimes\O_{X_{Y^{d(e-1)}}}(R_{Y^{d(e-1)}})\right)_{Y^{de}}\\ &\hspace{70pt}\circ {{f^{(d(e-1))}}_{Y^{de}}}_*\phi^{(d)}_{(X^{d(e-1)},\Delta)/Y^{d(e-1)}}\otimes\left(\L^{(d(e-1))}_{(X,\Delta)/Y}\right)_{Y^{de}}(p^{d(e-1)}R_{Y^{de}}) \\ \cong&({F_Y^{d}}^*\theta^{(d(e-1))})\circ(\theta^{(d)}\otimes\O_{Y^{de}}((p^{d(e-1)}-1)(a^{-1}C-K_{Y^{de}}))) \end{align*}
implies that $E^{(de)}=(p^{d(e-1)}+\cdots+p+1)E^{(d)}=(p^{de}-1)(p^d-1)^{-1}E^{(d)}$ for every $e>0$. We define $\Delta_Y:=(p^d-1)^{-1}E^{(d)}$, this is independent of the choice of $d$ by the above. Note that by this definition \begin{align*}f_*\O_X((p^{d}-1)(K_{X/Y}+\Delta-R))\cong&\O_{Y}((p^{d}-1)(a^{-1}C-K_{Y}))\\\cong&\O_{Y}((p^{d}-1)(\Delta_{Y}-S)),\end{align*} which proves (1).\\ %
\tb{Step2.} We show that for each $e>0$ there exists a commutative diagram $$ \xymatrix { {F_Y^{de}}_*\L^{(de)}_{(Y,\Delta_Y)} \ar[rr]^{(\phi^{(de)}_{(Y,\Delta_Y)}\otimes \O_Y(S))^{**}} \ar[d]_{\cong} && \O_Y(S) \ar[d]^{\cong} & \\ f_*{F_X^{de}}_*\L^{(de)}_{(X,\Delta)}(p^{de}R) \ar[rr]^(0.55){\psi^{(de)}} && f_*\O_X(R) }$$ where $\psi^{(de)}:=f_*((\phi^{(de)}_{(X,\Delta)}\otimes\O_X(R))^{**})$. It is clear that each object of the above diagram is a reflexive sheaf, so we may assume that $Y=Y_0$ and $X$ is smooth. Since $F_X^d=(F_Y^d)_X\circ F_{X/Y}^{(d)}$, we have \begin{align*} \phi^{(d)}_{(X,\Delta)}\otimes\O_X(R):=&\Tr_{F_X^d}\otimes\o_X^{-1}(R) \cong\left(\Tr_{(F_Y^d)_X}\circ {(F_Y^d)_X}_*\Tr_{F_{X/Y}^{(d)}}\right)\otimes\o_X^{-1}(R)\\ \cong&\left(((f^*\Tr_{F_Y})\otimes\o_{X/Y})\circ {(F_Y^d)_X}_*\Tr_{F_{X/Y}^{(d)}}\right)\otimes\o_X^{-1}(R) \\ \cong&(f^*\phi^{(d)}_{Y}\otimes\O_X(R))\circ {(F_Y^d)_X}_*\left(\phi^{(d)}_{(X,\Delta)/Y}\otimes f_{Y^d}^*\o_{Y^d}^{1-p^d}(R_{Y^d})\right). \end{align*} We note that $\phi^{(d)}_Y$ is a morphism between vector bundles on $Y$, thus $\psi^{(d)}$ is decomposed into \begin{align*}\psi^{(d)}\cong &(\phi^{(d)}_{Y}\otimes \O_Y(S)) \circ {F_Y^d}_*(\theta^{(d)}\otimes\o_{Y^d}^{1-p^d}).\end{align*} %
On the other hand, by the definition of $\Delta_Y$, there exists a commutative diagram $$\xymatrix{ \O_Y((1-p^d)\Delta_Y+p^{d}S) \ar[r] \ar[d]_{\cong} & \O_{Y^{d}}(p^{d}S) \ar[d]^{\cong} \\ {f^{(d)}}_*\L^{(d)}_{(X,\Delta)/Y}(p^dR) \ar[r]^{\theta^{(d)}} & {f_{Y^d}}_*\O_{X_{Y^d}}(R_{Y^d}).}$$ Applying the functor ${F_Y^d}_*((\underline{\quad})\otimes\o_Y^{1-p^d})$ to this diagram, we have the following: $$ \xymatrix@R=10mm@C=17mm{ ({F_Y^d}_*\O_Y((1-p^{d})(K_Y+\Delta_Y)))\otimes \O_Y(S) \ar[r] \ar[d]_{\cong} & ({F_Y^d}_*\o_Y^{1-p^{d}})\otimes \O_Y(S) \ar[d]^{\cong} \\ {F_Y^d}_*{f^{(d)}}_*\L^{(d)}_{(X,\Delta)}(p^dR) \ar[r]^{{F_Y^d}_*(\theta^{(d)}\otimes\o_{Y}^{1-p^d})} & ({F_Y^d}_*\o_Y^{1-p^d})\otimes f_*\O_X(R) . } $$ Hence by the decomposition of $\psi^{(d)}$ and the definition of $\phi^{(d)}_{(Y,\Delta_Y)}$, the claim is proved in the case when $e=1$. %
Furthermore, for each $e>0$ we have \begin{align*}\psi^{(d(e+1))} \cong & (f_*(\phi^{(de)}_{(X,\Delta)}\otimes\O_X(R))) \circ {f}_*{F_X^{de}}_*(\phi^{(d)}_{(X^{de},\Delta)}\otimes\L^{(de)}_{(X,\Delta)}(p^{de}R))\\ \cong &\psi^{(de)} \circ {F_Y^{de}}_*{f^{(de)}}_*(\phi^{(d)}_{(X^{de},\Delta)}\otimes\O_{X^{de}}(R+(1-p^{de})(K_{X^{de}}+\Delta-R)))\\ \cong & \phi^{(de)}_{(Y,\Delta_Y)}\otimes\O_Y(S) \circ {F_Y^{de}}_*(\phi^{(d)}_{(Y^{de},\Delta_Y)}\otimes\O_{Y^{de}}(S+(1-p^{de})a^{-1}C))\\ \cong & \phi^{(de)}_{(Y,\Delta_Y)}\otimes\O_Y(S) \circ {F_Y^{de}}_*(\phi^{(d)}_{(Y^{de},\Delta_Y)}\otimes\O_{Y^{de}}((1-p^{de})(K_Y+\Delta_Y)+p^{de}S))\\ \cong & \phi^{(de)}_{(Y,\Delta_Y)}\otimes\O_Y(S) \circ {F_Y^{de}}_*(\phi^{(d)}_{(Y^{de},\Delta_Y)}\otimes\L^{(de)}_{(Y,\Delta_Y)}(p^{de}S)) \cong  \phi^{(d(e+1))}_{(Y,\Delta_Y)} \otimes \O_Y(S). \end{align*} This is our claim. \\ 
\tb{Step3.} We prove statements (2)-(4). (3) is obvious. We show (2). By the definition of $\phi^{(d)}_{(X,\Delta)}$, we may assume that $X$ is smooth. Then there is a commutative diagram $$ \xymatrix@C=40mm{ ({F_Y^{de}}_*{f^{(de)}}_*\L^{(de)}_{(X,\Delta)}(p^{de}R))(D) \ar[r]^{\psi^{(de)}\otimes\O_Y(D)} \ar[d]_{\cong} & (f_*\O_X(R))(D) \ar[d]^{\cong} \\ {F_Y^{de}}_*{f^{(de)}}_*\L^{(de)}_{(X,\Delta)}(p^{de}(f^*D+R)) \ar[r]^(0.6){f_*(\phi^{(de)}_{(X,\Delta)}\otimes\O_Y(f^*D+R))} \ar[d]_{\cong} & f_*\O_X(f^*D+R) \ar[d]^{\cong} \\ {F_Y^{de}}_*{f^{(de)}}_*\L^{(de)}_{(X,\Delta)}(p^{de}(f^*D+B'+R)) \ar[r]^(0.58){f_*(\phi^{(de)}_{(X,\Delta)}\otimes\O_X(f^*D+B'+R))} & f_*\O_X(f^*D+B'+R). }$$
Thus by Step2, \begin{align*} H^0(X,\phi^{(de)}_{(X,\Delta)}\otimes\O_X(f^*D+B'+R))\cong& H^0(Y,f_*(\phi^{(de)}_{(X,\Delta)}\otimes\O_X(f^*D+B'+R)))\\ \cong& H^0(Y,(\phi^{(de)}_{(Y,\Delta_Y)}\otimes(D+S))^{**}),\end{align*}
which implies (2). %
For (4), we may assume that $Y=Y_0$. Then, since $f_*\O_X(R)$ is a line bundle, we only need to show that the case when $f_*\O_X(R)\cong\O_Y$ and $R=\Gamma\ge0$. In this case, since $R$ and $(p^d-1)(K_X+\Delta)$ are Cartier, and since $f$ is flat projective, we have $H^0(X_{y},\O_X(R)|_{X_{y}})\ne0$ and \begin{align*} H^0(X_{y},\O_X((1-p^d)R)|_{X_{y}})=H^0(X_{y},\O_{X_{y}}((1-p^{d})(K_{X_{y}}+\Delta_{y})))\ne0 \end{align*} for every $y\in Y$ by assumptions and upper semicontinuity \cite[Theorem~12.8]{Har77}. In particular, if $X_y$ is reduced then $\O_X(R)|_{X_{y}}\cong\O_{X_{y}}$, because every nonzero endomorphism of a line bundle on a connected reduced projective scheme over a field is an isomorphism. Hence the isomorphism $\O_Y\cong f_*\O_X(R)$ shows that the support of $R$ is contained a union of nonreduced fibers. 
Set $Y_1:=\{y\in Y|H^0(X_y,\O_{X}(R)_y)\cong k(y)\}$. Then we have
\begin{align*}
{\rm Supp\ } \Delta_{Y}|_{Y_1} ={\rm Supp\ } \coker(\theta^{(d)})|_{Y_1}=&\{y\in Y_1|S^0(X_{\ol y},\Delta_{\ol y},\O_{X}(R)|_{X_{\ol y}})=0\},
\end{align*}
where the first (resp. the second) equality follows from the definition of $\Delta_Y$ (resp. Observation \ref{obs:lower semi conti} (I\hspace{-1pt}V)).
Now we prove (a)$\Rightarrow$(b). In the situation of (a) $X_y$ is reduced, and so $y\in Y\setminus f({\rm Supp\ } R)$. 
We recall Example \ref{eg:gl F-sp}, which shows that the global $F$-splitting of $(X_{\ol y},\Delta_{\ol y})$ is equivalent to the equality $S^0(X_{\ol y},\Delta_{\ol y},\O_{X_{\ol y}})=H^0(X_{\ol y},\O_{X_{\ol y}})$. Thus it is enough to show that $y\in Y_1$. 
Let $\ol{\{y\}}$ be the closure in $Y$ of the set $\{y\}$ with the reduced induced subscheme structure. Let $Y'$ be a smooth open subset of $\ol{\{y\}}$ such that $R_{Y'}=0$ and that ${\rm Supp\ }\Delta$ does not contain any irreducible component of any fiber over $Y'$. 
Then for a general closed point $y'\in Y'$, 
\begin{align*}
\dim_{k}S^0(X_{y'},\Delta_{y'},\O_{X_{y'}})=&\dim_{k(\ol y)}S^0(X_{\ol y},\Delta_{y'},\O_{X_{\ol y}})\\
=&\dim_{k(\ol y)}H^0(X_{\ol y},\O_{X_{\ol y}})=\dim_{k}H^0(X_{y'},\O_{X_{y'}}),
\end{align*}
where the first (resp. the third) equality follows from lower semicontinuity proved in Observation \ref{obs:lower semi conti} (I\hspace{-1pt}V) (resp. upper semicontinuity).
Thus $(X_{y'},\Delta_{y'})$ is globally $F$-split, and in particular $X_{y'}$ is reduced. 
Since $k$ is algebraically closed, we have $H^0(X_{y'},\O_{X_{y'}})\cong k$, and hence $H^0(X_y,\O_{X_y})\cong k(y)$, or equivalently, $y\in Y_1$. 
To prove (b)$\Rightarrow$(a), we replace $Y$ by its affine open subset contained in $Y\setminus({\rm Supp\ }\Delta_Y\cup f({\rm Supp\ }R))$. Then the surjectivity of $\theta^{(d)}$ shows that $\phi^{(d)}_{(X,\Delta)/Y}:{F^{(d)}_{X/Y}}_*\L^{(d)}_{(X,\Delta)/Y}\to \O_{X_{Y^{d}}}$ is split, and thus so is $\phi^{(d)}_{(X,\Delta)/Y}|_{X_{\ol y^d}}:{F^{(d)}_{X_{\ol y}/{\ol y}}}_*(\L^{(d)}_{(X,\Delta)/Y}|_{(X_{\ol y})^d})\to\O_{X_{\ol y^{d}}}$. This means that $\Delta$ does not contain any irreducible component of $f^{-1}(y)$, so $\Delta_{\ol y}$ is well-defined, and we have $\phi^{(d)}_{(X,\Delta)/Y}|_{X_{\ol y^d}}\cong\phi^{(d)}_{(X_{\ol y},\Delta_{\ol y})/\ol y}$, which completes the proof.  \end{proof}
\begin{proof}[Proof of Corollary $\ref{cor:Iitaka fibrat}$]
We use the notation of Observation \ref{obs:Iitaka fibrat}. Let $l>0$ be an integer such that $l(K_X+\Delta)$ is Cartier and base point free. We replace $Y$ by its smooth locus $Y_{\rm sm}$, and $X$ by the smooth locus of $f^{-1}(Y_{\rm sm})$. As in the proof of Theorem \ref{thm:quasi can bdl formula}, we set $$\psi^{(e)}:=f_*(\phi^{(e)}_{(X,\Delta)}\otimes\O_X(R))\textup{~and~} \theta^{(e)}:={f_{Y^e}}_*(\phi^{(e)}_{(X,\Delta)/Y}\otimes\O_{X}(R)_{Y^e})$$ for every $e>0$ divisible enough.  Since $S^0(X,\Delta,(p^d-1)(K_X+\Delta))\ne0$, we have \begin{align*} 0\ne &S^0(X,\Delta,(l-1)(p^d-1)(K_X+\Delta))\\ \hookrightarrow &S^0(X,\Delta,(l-1)(p^d-1)(K_X+\Delta)+B)=S^0(X,\Delta,f^*l(p^d-1)H+R).  \end{align*} 
This implies the morphism $$f_*(\phi^{(e)}_{(X,\Delta)}\otimes\O_X(f^*l(p^d-1)H+R))\cong \psi^{(e)}\otimes\O_Y(l(p^d-1)H)$$ is nonzero for an $e>0$ divisible enough, where the isomorphism follows from projection formula. Hence $\psi^{(e)}$ is also nonzero. By an argument similar to Step2, we can factor $\psi^{(e)}$ into $(\phi^{(e)}_{Y}\otimes\O_Y(S))\circ {F_Y^e}_*(\theta^{(e)}\otimes\o_Y^{1-p^e})$, and hence $\theta^{(e)}$ is nonzero. Thus ${\theta^{(e)}}\otimes k({\ol\eta})\cong H^0(X_{\ol\eta},\phi^{(e)}_{(X_{\ol\eta}/\ol\eta,\Delta_{\ol\eta})})$ is nonzero, or equivalently, $(X_{\ol\eta},\Delta_{\ol\eta})$ is globally $F$-split. In particular $X_{\ol\eta}$ is reduced, and this means that $f$ is separable. 
We show (2). First note that $R/R_S(X,\Delta,K_X+\Delta)$ is finite dimensional if and only if so is $R/R_S(X,\Delta,l(K_X+\Delta))$ by Lemma \ref{lem:fg}. 
Let $m\ge0$ be an integer with $l|m$. Then we have $H^0(X,lm(K_X+\Delta))\cong H^0(Y,lm(K_Y+\Delta))$. Furthermore, by Theorem \ref{thm:quasi can bdl formula} (2), we have 
\begin{align*} S^0(X,\Delta,m(K_X+\Delta))&=S^0(X,\Delta,f^*(mH-S)+R) \\ &=S^0(Y,\Delta_Y,mH-S+S)=S^0(Y,\Delta_Y,mH). \end{align*}
Thus $R/R_S(X,\Delta,l(K_X+\Delta))\cong R/R_S(Y,\Delta_Y,lH)$.  By Example \ref{eg:rs for ample div}, this $k$-vector space is finite dimensional if and only if $(Y,\Delta_Y)$ is $F$-pure, which is our claim.
\end{proof}
\begin{eg}\label{eg:ell surface} 
Let $f:X\to Y$ be a relatively minimal elliptic fibration. In other words, let $f$ be a generically smooth fibration from a smooth projective surface $X$ to a smooth projective curve $Y$, whose fibers have arithmetic genus one and do not contain $(-1)$-curves of $X$. Then by the canonical bundle formula \cite[Theorem~2]{BM77}, we have $$K_X\sim f^*D+\sum_{i=1}^rl_iF_i,$$ where $D$ is a divisor on $Y$, $m_iF_i=X_{y_i}$ is a multiple fiber with the multiplicity $m_i$, and $0\le l_i<m_i$. 
Let $m$ be the least common multiple of $m_1,\ldots,m_r$, and let $a,e\ge0$ be integers such that $m=ap^e$ and $p\nmid a$. 
We set $$R:=\sum_{i-1}^r\left\{\frac{(1-p^d)l_i}{m_i}\right\}m_iF_i$$ for some $d\ge e$ satisfying $a|(p^d-1)$. 
Here, recall that for every $s\in\Q$, $\{s\}$ is the fractional part $s-\lfloor s\rfloor$ of $s$.
It is easily seen that $f_*\O_X(R)\cong\O_Y$ and 
\begin{align*}
aK_X-aR\sim& af^*D+a\sum_{i=1}^rl_iF_i-a\sum_{i=1}^r(1-p^d)l_iF_i+a\sum_{i=1}^r \lfloor\frac{(1-p^d)l_i}{m_i}\rfloor m_iF_i \\
=& f^*(aD+\sum_{i=1}^r (\frac{al_ip^{d}}{m_i}+a\lfloor\frac{(1-p^d)l_i}{m_i}\rfloor)y_i).
\end{align*} Thus, $a$ and $R$ satisfy condition (ii) of Theorem \ref{thm:quasi can bdl formula}. Furthermore, assume that the geometric generic fiber of $f$ is globally $F$-split, or equivalently, is an elliptic curve with nonzero Hasse invariant. Then by Theorem \ref{thm:quasi can bdl formula} there exists an effective $\Z_{(p)}$-divisor $\Delta_Y$ on $Y$ such that $$S^0(X,f^*D'+R)=S^0(Y,\Delta_Y,D')$$ for every divisor $D'$ on $Y$, and $y_1,\ldots,y_r\in f(R)\cup \Delta_Y$. Remark that if $p\nmid m_i$ for each $i$, then $m_i|(p^d-1)$, and so $R=0$. 
\end{eg} 
Finally, applying Theorem \ref{thm:quasi can bdl formula}, we show that for a smooth projective surface $X$ of general type, $R/R_S(X,K_X)$ is finite dimensional (Corollary \ref{cor:surf of gen type}).
\begin{cor}\label{cor:birat mor} \samepage Let $f:X\to Y$ be a birational morphism between normal projective varieties, let $\Delta$ be an effective $\Q$-Weil divisor on $X$, and let $\Delta_Y:=f_*\Delta$. Assume that $a(K_Y+\Delta_Y)$ is Cartier for some $a>0$ not divisible by $p$ and $(Y,\Delta_Y)$ is canonical. Then for each $m>0$, $$ S^0(Y,\Delta_Y,am(K_Y+\Delta_Y)) \cong S^0(X,\Delta,am(K_X+\Delta)).$$ 
\end{cor}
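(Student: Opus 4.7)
The plan is to reduce the claim to Theorem \ref{thm:quasi can bdl formula} by choosing the divisor $R$ there to absorb the exceptional part of $a(K_X+\Delta)$ coming from the canonical hypothesis. Note that $f$ qualifies as a fibration in the sense of this paper: it is separable, and $f_*\O_X\cong\O_Y$ by normality of $Y$ and Zariski's main theorem.

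Since $(Y,\Delta_Y)$ is canonical and $f_*\Delta=\Delta_Y$, I can write $\Delta=f_*^{-1}\Delta_Y+\sum_E c_E E$ (with $E$ ranging over $f$-exceptional primes and $c_E\geq 0$) and $K_X+f_*^{-1}\Delta_Y=f^*(K_Y+\Delta_Y)+\sum_E a_E E$ with discrepancies $a_E\geq 0$. Adding,
\[
K_X+\Delta=f^*(K_Y+\Delta_Y)+G,\qquad G:=\sum_E(a_E+c_E)E\geq 0,
\]
with $\mathrm{Supp}\,G\subseteq\mathrm{Exc}(f)$. The implicit assumption that $a\Delta$ is integral (needed to define $S^0(X,\Delta,\cdot)$) together with $a(K_Y+\Delta_Y)$ Cartier forces $aG=a(K_X+\Delta)-f^*a(K_Y+\Delta_Y)$ to be an integral, effective Weil divisor supported on $\mathrm{Exc}(f)$.

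Let $Y_0:=Y_{\mathrm{sm}}\setminus f(\mathrm{Exc}(f))$. Since $Y$ is normal and $f$ is birational, $Y\setminus Y_0$ has codimension at least $2$, and $f$ restricts to an isomorphism over $Y_0$, so $f_0$ is flat. I then apply Theorem \ref{thm:quasi can bdl formula} with
\[
R:=aG,\quad S:=0,\quad B:=a(a-1)G,\quad C:=a(K_Y+\Delta_Y).
\]
Condition (i) is automatic since $X_{\ol\eta}=\Spec\ol{K(Y)}$ is a reduced geometric point, which is globally $F$-split for trivial reasons. For (ii), normality of $Y$ and exceptionality of $aG$ give $f_*\O_X(aG)\cong\O_Y=\O_Y(S)$, and a direct calculation yields
\[
aR=a^2G=a(K_X+\Delta)+a(a-1)G-f^*a(K_Y+\Delta_Y)\sim a(K_X+\Delta)+B-f^*C.
\]
By part (3) of the theorem, the divisor on $Y$ produced by the construction is $f_*\Delta=\Delta_Y$, matching our notation.

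Finally, I apply part (2) of Theorem \ref{thm:quasi can bdl formula} with $B':=a(m-1)G$ (effective and supported on $X\setminus X_0$ since $m\geq 1$) and $D:=am(K_Y+\Delta_Y)$ (Cartier by hypothesis). Then
\[
B'+f^*D+R=amG+f^*am(K_Y+\Delta_Y)=am(K_X+\Delta),\qquad D+S=am(K_Y+\Delta_Y),
\]
and part (2) yields the desired isomorphism $S^0(X,\Delta,am(K_X+\Delta))\cong S^0(Y,\Delta_Y,am(K_Y+\Delta_Y))$. The only real content of the argument is the choice $R=aG$, which is essentially forced by the form of condition (ii); once it is made, the remainder is bookkeeping, and I would not expect a serious obstacle beyond carefully tracking the integrality of $aG$.
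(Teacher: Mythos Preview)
Your proof is correct and follows essentially the same approach as the paper. The paper also sets $R:=a(K_X+\Delta)-f^*a(K_Y+\Delta_Y)$ (your $aG$), $B:=(a-1)R$, and $B'_m:=(m-1)R$, and then invokes Theorem~\ref{thm:quasi can bdl formula} (2) and (3) exactly as you do; your additional remarks on why $G\ge 0$ and on the choice of $Y_0$ simply make explicit what the paper leaves to the reader.
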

\begin{proof} Since $(Y,\Delta_Y)$ is canonical, $R:=a(K_X+\Delta)-f^*a(K_Y+\Delta_Y)$ is an effective Weil divisor on $X$ supported on the exceptional locus of $f$. Note that $f_*\O_X(R)\cong\O_Y$. We set $B:=(a-1)R$ and $B'_m:=(m-1)R$ for each $m\ge1$. Then we have $$ aR=R+(a-1)R=a(K_X+\Delta)-f^*a(K_Y+\Delta_Y)+B.$$ Thus, by Theorem \ref{thm:quasi can bdl formula}, we have \begin{align*}S^0(Y,\Delta_Y,am(K_Y+\Delta_Y))&\cong S^0(X,\Delta,B'_m+f^*am(K_Y+\Delta_Y)+R)\\&\cong S^0(X,\Delta,am(K_X+\Delta)).\end{align*}  \end{proof}
\begin{cor}[\textup{\cite[Excercise~5.15]{PST14}}] \label{cor:birat inv} \samepage Let $\varphi:Y\dashrightarrow Y'$ be a birational map between normal projective varieties, let $\Delta$ be an effective $\Q$-Weil divisor on $Y$, and let $\Delta':=\varphi_*\Delta$. Assume that $a(K_Y+\Delta)$ and $a(K_{Y'}+\Delta')$ are Cartier for some $a>0$ not divisible by $p$, and that $(Y,\Delta)$ and $(Y',\Delta')$ are canonical. Then, for each $m>0$, $$S^0(Y,\Delta,am(K_Y+\Delta))\cong S^0(K_{Y'},\Delta',am(K_{Y'}+\Delta')).$$  \end{cor}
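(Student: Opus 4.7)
The plan is to reduce Corollary~\ref{cor:birat inv} to two applications of Corollary~\ref{cor:birat mor} by passing to a common resolution.

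First I would construct a normal projective variety $X$ dominating both $Y$ and $Y'$: take $X$ to be the normalization of the closure of the graph of $\varphi$ in $Y\times Y'$, so that the two projections give birational projective morphisms $f:X\to Y$ and $f':X\to Y'$ with $\varphi\circ f=f'$ on the open locus where everything is an isomorphism. On $X$, define $\Delta_X$ to be the birational transform of $\Delta$ under $f$. Since $\varphi_*\Delta=\Delta'$, the divisor $\Delta_X$ coincides with the birational transform of $\Delta'$ under $f'$ on the open subset of $X$ isomorphic to the iso locus of $\varphi$, and hence globally. In particular $\Delta_X$ is an effective $\Q$-Weil divisor with $f_*\Delta_X=\Delta$ and $f'_*\Delta_X=\Delta'$.

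Next, I would check that the hypotheses of Corollary~\ref{cor:birat mor} hold for both $f:(X,\Delta_X)\to(Y,\Delta)$ and $f':(X,\Delta_X)\to(Y',\Delta')$. By assumption $a(K_Y+\Delta)$ and $a(K_{Y'}+\Delta')$ are Cartier with $p\nmid a$, and the pairs $(Y,\Delta)$ and $(Y',\Delta')$ are canonical; these are exactly what Corollary~\ref{cor:birat mor} needs on the base. Applying Corollary~\ref{cor:birat mor} then yields, for every $m>0$, the two isomorphisms
\begin{equation*}
S^0(Y,\Delta,am(K_Y+\Delta))\cong S^0(X,\Delta_X,am(K_X+\Delta_X))\cong S^0(Y',\Delta',am(K_{Y'}+\Delta')),
\end{equation*}
and the corollary follows by composing them.

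The only potentially delicate point is the verification that the birational transform $\Delta_X$ does not depend on whether we transfer $\Delta$ along $f^{-1}$ or $\Delta'$ along $(f')^{-1}$; but this is immediate from $\varphi_*\Delta=\Delta'$, since both transforms agree on the common open locus where $f$ and $f'$ are simultaneously isomorphisms, and a Weil divisor on a normal variety is determined by its restriction to any such dense open. Everything else is bookkeeping once Corollary~\ref{cor:birat mor} is in hand.
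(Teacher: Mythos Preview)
Your approach—resolving $\varphi$ by a common normal projective model $X$ with projections $f,f'$ and applying Corollary~\ref{cor:birat mor} once to each side—is exactly what the paper intends by ``follows directly from Corollary~\ref{cor:birat mor}.''

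One small correction: your claim that $\Delta_X$ (the strict transform of $\Delta$ via $f$) coincides with the strict transform of $\Delta'$ via $f'$ is not true in general. If $\varphi$ contracts a component $D$ of $\Delta$, then the strict transform of $D$ appears in $\Delta_X$ but contributes nothing to $\Delta'=\varphi_*\Delta$, hence is absent from the strict transform of $\Delta'$. The flaw in your justification is that the open locus where $f$ and $f'$ are simultaneously isomorphisms can have divisorial complement in $X$, so a Weil divisor is \emph{not} determined by its restriction there. Fortunately this does not affect the argument: Corollary~\ref{cor:birat mor} only requires $f'_*\Delta_X=\Delta'$, and that holds directly from the definition of the pushforward along a birational map, namely $f'_*\Delta_X=f'_*(f^{-1}_*\Delta)=\varphi_*\Delta=\Delta'$. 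With this adjustment your proof is complete.
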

\begin{proof} This follows directly from Corollary \ref{cor:birat mor}. \end{proof}
The following corollary will be used to prove weak positivity theorem when geometric generic fibers are normal projective surfaces of general type with rational double point singularities (Corollary \ref{cor:2 dim}). Recall that the finiteness of the dimension of $R/R_S$ is equivalent to the assumption (ii) of the main theorem (Theorems \ref{thm:main thm intro} or \ref{thm:main thm}).
\begin{cor}\label{cor:surf of gen type} \samepage Let $X$ be a normal projective surface of general type with rational double point singularities. If $p\ge 7$, then $R/R_S(X,K_X)$ is a finite dimensional vector space.\end{cor}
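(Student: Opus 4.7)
The plan is to reduce to the canonical model, then to $F$-purity via Example \ref{eg:rs for ample div}, and finally invoke the classification of $F$-pure Du Val singularities. Since rational double points are Du Val, hence Gorenstein canonical, singularities, the pair $(X,0)$ is canonical and $K_X$ is Cartier. Because $X$ is a surface of general type, the canonical ring $R(X,K_X)$ is finitely generated by classical surface theory (available in arbitrary characteristic), so the canonical model
$$X_c := \Proj R(X, K_X)$$
is a normal projective surface whose singularities are again rational double points and on which $K_{X_c}$ is an ample Cartier divisor. The natural birational map $X \dashrightarrow X_c$ satisfies the hypotheses of Corollary \ref{cor:birat inv} with $\Delta = 0$ and $a = 1$, since both pairs $(X,0)$ and $(X_c,0)$ are canonical and both canonical divisors are Cartier.

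By construction, $R(X,K_X) = R(X_c,K_{X_c})$, so $H^0(X,mK_X) \cong H^0(X_c,mK_{X_c})$ for every $m>0$. Combining this with the isomorphism $S^0(X,mK_X) \cong S^0(X_c,mK_{X_c})$ given by Corollary \ref{cor:birat inv}, we obtain an isomorphism of graded quotients
$$R/R_S(X,K_X) \;\cong\; R/R_S(X_c,K_{X_c}),$$
so it suffices to prove that the right-hand side is finite dimensional. Now $K_{X_c}$ is an ample Cartier divisor on the Gorenstein projective surface $X_c$, so Example \ref{eg:rs for ample div} (applied with $\Delta=0$ and $H=K_{X_c}$) reduces the desired finite-dimensionality to the assertion that $X_c$ is $F$-pure.

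Finally, $F$-purity is local, so it suffices to check that each rational double point of $X_c$ is $F$-pure. This is the content of the classification due to Hara--Watanabe: every Du Val singularity of type $A_n$, $D_n$, $E_6$, $E_7$, or $E_8$ is $F$-regular, hence $F$-pure, as soon as $p \geq 7$. (The worst case is $E_8$, which precisely requires $p \geq 7$.) Therefore $X_c$ is $F$-pure under our assumption $p \geq 7$, and the proof is complete. The main obstacle is this final step, which depends on the external classification of $F$-pure Du Val singularities; the earlier reductions are routine applications of Corollary \ref{cor:birat inv} and Example \ref{eg:rs for ample div}.
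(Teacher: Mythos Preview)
Your proof is correct and follows essentially the same strategy as the paper: reduce to the canonical model, check that its rational double points are $F$-pure when $p\ge 7$, and conclude finite-dimensionality from the ample case. The paper's route differs only in the intermediate step: it first passes via Corollary~\ref{cor:birat inv} to a \emph{smooth} minimal model (where $K_X$ is merely semi-ample), and then invokes Corollary~\ref{cor:Iitaka fibrat} to compare $R/R_S$ with that of the canonical model. You instead apply Corollary~\ref{cor:birat inv} directly between $X$ and $X_c$ and, since $K_{X_c}$ is genuinely ample, bypass Corollary~\ref{cor:Iitaka fibrat} in favour of the more elementary Example~\ref{eg:rs for ample div}. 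This is a bit more direct; the paper's detour through the smooth minimal model is not logically necessary here. For the final step, the paper cites Artin's classification of rational double points together with Fedder's criterion rather than Hara--Watanabe, but the mathematical content is the same.
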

\begin{proof} By Corollary \ref{cor:birat inv}, we may assume that $X$ is a smooth projective surface of general type which has no $(-1)$-curve. Then for each $n\gg0$, $nK_X$ is base point free, and $Y:=\Proj R(X,K_X)$ has only rational double point singularities \cite[Theorem~9.1]{Bad01}. When $p\ge7$, $Y$ is $F$-pure, because of the classification of rational double points \cite[Section 3]{Art77}, and of Fedder's criterion \cite{Fed83}. Hence the statement follows from Corollary \ref{cor:Iitaka fibrat}. \end{proof}
\section{Weak positivity and numerical invariant}\label{section:inv}
In this section, we define an invariant of coherent sheaves on normal varieties which measures positivity. This will play an important role in the proof of the main theorem. \par We first recall some definitions. %
\begin{defn} \samepage A coherent sheaf $\G$ on a variety $Y$ is said to be {\it generically globally generated} if the natural morphism $H^0(Y,\G)\otimes_k\O_Y\to\G$ is surjective on the generic point of Y. \end{defn}
Viehweg introduced the notion of weak positivity as a generalization of nefness of vector bundles. %
\begin{defn}[\textup{\cite[Notation, (vii)]{Kol87}}] \label{defn:wp} \samepage A coherent sheaf $\G$ on a normal projective variety $Y$ is said to be {\it weakly positive} if for every ample divisor $H$ and for each $\a>0$ there exist some $\b>0$ such that $(S^{\a\b}\G)^{**}\otimes\O_Y(\b H)$ is generically globally generated. Here $(S^{\a\b}\G)^{**}$ is the double dual of the $\a\b$-th symmetric product of $\G$. \end{defn}
\begin{rem} \label{rem:wp} (1) The above definition is independent of the choice of $H$ (e.g.,\textup{\cite[Lemma 2.14]{Vie95}}). \\ (2) $\G$ is weakly positive if and only if its double dual $\G^{**}$ is weakly positive. \\ (3) Assume that $\G$ is a vector bundle on a smooth projective curve. Then $\G$ is weakly positive if and only if it is nef.  \end{rem}
\begin{defn} \label{defn:inv} \samepage Let $Y$ be a normal variety, let $\G$ be a coherent sheaf, and let $H$ be an ample Cartier divisor. Then we define  \begin{align*} T(Y,\G,H)&:= \begin{array}{cc|ccl}
\ldelim\{{3}{0.1pt}&& \textup{there exists an integer $e>0$ such that}& \rdelim\}{3}{1pt} \cr
&\e\in\Q&\textup{ $p^e\e\in\Z$ and $({F_Y^e}^*\G)(-p^e\e H)$ is}& \cr
&                 &\textup{generically globally generated.}& 
\end{array}~,\\
t(Y,\G,H)&:=\sup T(Y,\G,H)\in \R\cup\{\infty\}.  \end{align*} \end{defn}
\begin{lem} \label{lem:t} \samepage Let $Y,\G,H$ be as in Definition $\ref{defn:inv}$, and let $\F$ be a coherent sheaf.  \begin{itemize} \item[$(1)$]If there exists a generically surjective morphism $\F\to\G$, then $t(Y,\F,H)\le t(Y,\G,H)$.  \item[$(2)$]$ t(Y,\F,H)+t(Y,\G,H) \le t(Y,\F\otimes\G,H) $.  \item[$(3)$]For each $e>0$, $t(Y,{F_Y^e}^*\G,H)=p^et(Y,\G,H)$.  \end{itemize} \end{lem}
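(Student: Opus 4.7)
My plan is to prove the three parts by unpacking the definition of $T(Y,\G,H)$ and using functoriality of the Frobenius pullback together with standard behavior of generically globally generated sheaves under surjections, tensor products, and pullbacks. The main tool is that $F_Y^{e*}$ is right exact and that $F_Y^{e*}(\mathcal{H}(-aH)) = (F_Y^{e*}\mathcal{H})(-p^{e}aH)$.

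For (1), I would show the inclusion $T(Y,\F,H)\subseteq T(Y,\G,H)$ directly. A generically surjective $\F\to\G$ pulls back under $F_Y^{e*}$ to a generically surjective map $F_Y^{e*}\F\to F_Y^{e*}\G$ (right exactness on the generic stalk). Twisting by $\O_Y(-p^e\varepsilon H)$ and composing with the global-sections map of the source, one sees that generic global generation of $(F_Y^{e*}\F)(-p^e\varepsilon H)$ implies generic global generation of $(F_Y^{e*}\G)(-p^e\varepsilon H)$, hence the inclusion, and taking suprema gives the claim.

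For (2), given $\varepsilon_1\in T(Y,\F,H)$ witnessed by $e_1$ and $\varepsilon_2\in T(Y,\G,H)$ witnessed by $e_2$, I would first normalize to a common exponent: applying $F_Y^*$ to a generically globally generated sheaf yields a generically globally generated sheaf (right exactness), and replacing $e_i$ by any larger $e$ only multiplies $p^{e_i}\varepsilon_i$ by a power of $p$, so we may take $e_1=e_2=e$. Then the tensor product
\[
(F_Y^{e*}\F)(-p^e\varepsilon_1 H)\otimes(F_Y^{e*}\G)(-p^e\varepsilon_2 H)\cong \bigl(F_Y^{e*}(\F\otimes\G)\bigr)(-p^e(\varepsilon_1+\varepsilon_2)H)
\]
is generically globally generated, since generic global generation is preserved by tensor products (on the generic stalk, tensors of generating sections generate). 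Hence $\varepsilon_1+\varepsilon_2\in T(Y,\F\otimes\G,H)$, and taking suprema on each side yields the desired inequality.

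For (3), both directions are obtained by reindexing the Frobenius exponent. If $\varepsilon\in T(Y,\G,H)$ is witnessed by exponent $e_0$, then applying $F_Y^{e*}$ shows that $p^e\varepsilon\in T(Y,F_Y^{e*}\G,H)$ with witness $e_0$ (using $F_Y^{e_0*}\circ F_Y^{e*}=F_Y^{(e+e_0)*}$), giving $p^e t(Y,\G,H)\le t(Y,F_Y^{e*}\G,H)$. Conversely, any $\delta\in T(Y,F_Y^{e*}\G,H)$ witnessed by $e_0$ is of the form $\delta=p^e\varepsilon$ with $\varepsilon\in T(Y,\G,H)$ witnessed by $e+e_0$, since
\[
\bigl(F_Y^{(e+e_0)*}\G\bigr)(-p^{e+e_0}\varepsilon H)=\bigl(F_Y^{e_0*}(F_Y^{e*}\G)\bigr)(-p^{e_0}\delta H).
\]
This yields the reverse inequality and hence equality. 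The only mildly delicate point across all three parts is checking that generic global generation is preserved by the operations in play; since the definition only requires surjectivity at the generic point, no obstacle arises there, so I do not anticipate a hard step in this lemma.
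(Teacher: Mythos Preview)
Your proposal is correct and is precisely the detailed unpacking of the definition that the paper has in mind; the paper's own proof consists of the single sentence ``This follows directly from the definition.'' Your argument supplies exactly the routine verifications (preservation of generic global generation under generically surjective maps, tensor products, and Frobenius pullback, together with the reindexing $F_Y^{e_0*}\circ F_Y^{e*}=F_Y^{(e+e_0)*}$) that make this immediate.
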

\begin{proof} This follows directly from the definition. \end{proof}
\begin{prop}\label{prop:t ggg} \samepage 
Let $Y$ be a normal projective variety of dimension $n$, let $\G$ and $H$ be as in Definition $\ref{defn:inv}$, and let $\Delta$ be an effective $\Q$-Weil divisor on $Y$ such that $K_Y+\Delta$ is $\Q$-Cartier. 
Set $t:=t(Y_0,\G|_{Y_0},H|_{Y_0})$ for an open subset $Y_0\subseteq Y$ satisfying $\codim(Y\setminus Y_o)\ge 2$. 
If $D$ is a Cartier divisor such that $D-(K_Y+\Delta)-nA+tH$ is ample for some base point free ample divisor $A$, then $\G^{**}(D)$ is generically globally generated. \end{prop}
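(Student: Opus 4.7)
The plan is to combine the Frobenius trace map for the pair $(Y,\Delta)$ with Castelnuovo--Mumford regularity to convert the positivity encoded by $t$ into generic global generation of $\G^{**}(D)$. Since $D-(K_Y+\Delta)-nA+tH$ is ample and ampleness is Zariski-open in $N^1(Y)_{\Q}$, first pick a rational number $\e<t$ close enough to $t$ that $M:=D-(K_Y+\Delta)-nA+\e H$ is still ample. By the definition of $t=\sup T(Y_0,\G|_{Y_0},H|_{Y_0})$ there exists $e>0$ with $p^e\e\in\Z$ such that $(F_{Y_0}^e)^*(\G|_{Y_0})\otimes\O_{Y_0}(-p^e\e H|_{Y_0})$ is generically globally generated on $Y_0$. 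Enlarging $e$ to a multiple, I may further require $a\mid(p^e-1)$ so that $(p^e-1)(K_Y+\Delta)$ is Cartier, and take $e$ as large as the argument below demands.

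The trace map $\phi^{(e)}_{(Y,\Delta)}\colon (F_Y^e)_*\L^{(e)}_{(Y,\Delta)}\to\O_Y$ is generically surjective on $Y$. Tensoring with $\G^{**}(D)$ and applying the projection formula produces a generically surjective morphism
\[
\mathcal{P}\;:=\;(F_Y^e)_*\bigl[(F_Y^e)^*\G^{**}\otimes\O_Y\bigl(p^eD-(p^e-1)(K_Y+\Delta)\bigr)\bigr]\;\longrightarrow\;\G^{**}(D).
\]
Thus it is enough to show that $\mathcal{P}$ is generically globally generated, and I will in fact prove the stronger statement that $\mathcal{P}$ is $0$-regular with respect to $A$, hence globally generated by Castelnuovo--Mumford. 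Using the projection formula once more, for $1\le i\le n$ the group $H^i(Y,\mathcal{P}(-iA))$ is identified with
\[
H^i\!\bigl(Y,\;(F_Y^e)^*\G^{**}(-p^e\e H)\otimes\O_Y\bigl(p^e(M+(n-i)A)+(K_Y+\Delta)\bigr)\bigr).
\]
Since $M$ is ample and $(n-i)A$ is nef, $M+(n-i)A$ is ample for each such $i$; the first tensor factor is generically globally generated on $Y_0$ by the defining property of $t$; and for $e\gg 0$ the ample piece $p^e(M+(n-i)A)$ dominates the fixed defect $(K_Y+\Delta)$. Combining these with the projection-formula rewriting $(F_Y^e)_*\bigl[(F_Y^e)^*\G^{**}\otimes\mathcal{L}\bigr]\cong \G^{**}\otimes(F_Y^e)_*\mathcal{L}$, the desired $H^i$-vanishing follows from Fujita vanishing applied to the single fixed coherent sheaf $\G^{**}$.

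The main obstacle is the codimension-$\ge 2$ bookkeeping: the invariant $t$ provides generic global generation only on $Y_0$, whereas the cohomology computations live on all of $Y$. This is handled by systematically passing to reflexive hulls---on a normal variety, cohomology of reflexive sheaves is controlled by their restrictions to $Y_0$ thanks to $\codim(Y\setminus Y_0)\ge 2$, and we may choose $Y_0$ inside the locally free locus of $\G$ so that $\G^{**}|_{Y_0}=\G|_{Y_0}$. A secondary technical point is that the vanishing must be uniform in the Frobenius parameter $e$, which is precisely why Fujita vanishing (applied to the fixed sheaf $\G^{**}$) is invoked rather than Serre vanishing.
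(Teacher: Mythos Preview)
Your argument has a genuine gap at the $H^i$-vanishing step. You claim that
\[
H^i\bigl(Y,\;(F_Y^e)^*\G^{**}(-p^e\e H)\otimes\O_Y\bigl(p^e(M+(n-i)A)+(K_Y+\Delta)\bigr)\bigr)=0
\]
follows from Fujita vanishing applied to the fixed sheaf $\G^{**}$. But after the projection formula this group is $H^i\bigl(Y,\G^{**}\otimes(F_Y^e)_*\L_e\bigr)$ for a line bundle $\L_e$, and $(F_Y^e)_*\L_e$ is a vector bundle of rank $p^{en}$, not a twist of the form $\O_Y(mH'+N)$; Fujita vanishing says nothing about such tensor products. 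Equivalently, on the source of $F_Y^e$ the sheaf $(F_Y^e)^*\G^{**}$ varies with $e$, so no uniform Fujita bound is available. You mention that $(F_Y^e)^*\G^{**}(-p^e\e H)$ is generically globally generated, but a generically surjective map from a free sheaf does not force higher cohomology of the target to vanish. (Relatedly, your remark that cohomology of reflexive sheaves on a normal variety is controlled by the restriction to $Y_0$ holds only for $H^0$, not for $H^i$ with $i>0$.)

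The paper repairs this by using the generic surjection $\bigoplus\O_Y\to\bigl((F_Y^e)^*\G\bigr)^{**}(-p^e\e H)$ \emph{before} any vanishing is attempted: twisting it appropriately and pushing forward along $F_Y^e$ produces a generically surjective map
\[
\bigoplus(F_Y^e)_*\O_Y\bigl(q_el(B+nA)+(r_e+1)(D-K_Y+\e H)+K_Y\bigr)\longrightarrow\G^{**}(D),
\]
where $l>0$ is chosen with $l(K_Y+\Delta)$ Cartier and $l\e\in\Z$, and $p^e-1=q_el+r_e$ with $0\le r_e<l$. The map factors through the trace $\phi_Y^{(e)}$ (not $\phi_{(Y,\Delta)}^{(e)}$); the $\Delta$-contribution is absorbed by the inclusion $\O_Y(-q_el\Delta)\subset\O_Y$, which is an isomorphism at the generic point and hence costs nothing for \emph{generic} global generation. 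Now Castelnuovo--Mumford regularity of the source reduces to the vanishing of $H^i$ of the \emph{line bundle} $\O_Y\bigl(q_el(B+(n-i)A)+(r_e+1)(D-K_Y+\e H-iA)+K_Y\bigr)$; since $B+(n-i)A$ is ample, $q_e\to\infty$, and $r_e$ takes only finitely many values, ordinary Serre vanishing suffices. This also sidesteps your implicit assumption that $(p^e-1)(K_Y+\Delta)$ is Cartier for some $e$, which fails whenever $p$ divides the Cartier index of $K_Y+\Delta$.
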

\begin{proof} 
Since $t(Y_0,\G|_{Y_0},H|_{Y_0})$ is the supremum, there exists an $\e\in T(Y_0,\G|_{Y_0},H|_{Y_0})$ such that $B:=D-(K_Y+\Delta)-nA+\e H$ is ample. We fix such an $\e$. By the definition, $p^e\e\in\Z$ and there is a generically surjective morphism $\bigoplus\O_Y\to({F_Y^e}^*\G)^{**}(-p^e\e H)$ for every $e\gg0$.
Let $l>0$ be an integer such that $l(K_Y+\Delta)$ is Cartier and $l\e\in\Z$. For every $e\ge0$, we denote by $q_e$ and $r_e$ respectively the quotient and the remainder of the division of $p^e-1$ by $l$.
Hence we have following generically surjective morphisms 
\begin{align*}
&\bigoplus{F_Y^e}_*\O_Y(q_el(B+nA)+(r_e+1)(D-K_Y+\e H)+K_Y) \\
\cong &{F_Y^e}_*\bigoplus\O_Y(q_el(B+nA)+(r_e+1)(D-K_Y+\e H)+K_Y) \\
\to & {F_Y^e}_*(({F_Y^e}^*\G)^{**}\otimes\O_Y(q_el(B+nA-\e H)+(r_e+1)(D-K_Y)+K_Y)) \\
= & {F_Y^e}_*(({F_Y^e}^*\G)^{**}\otimes\O_Y(-q_el\Delta+p^e(D-K_Y)+K_Y)) \\
\cong & \left(\G^{**}(D)\otimes{F_Y^e}_*\O_Y((1-p^e)K_Y-q_el\Delta)\right)^{**}\\
\to & \left(\G^{**}(D)\otimes{F_Y^e}_*\O_Y((1-p^e)K_Y)\right)^{**} \\
\to &\G^{**}(D),
\end{align*}
where the isomorphism in the fifth line is induced by the projection formula, and the last morphism is induced by $\phi^{(e)}_{Y}$. Therefore, it is sufficient to show that 
$${F_Y^e}_*\O_Y(q_el(B+nA)+(r_e+1)(D-K_Y+\e H)+K_Y)$$
is globally generated for each $e\gg0$. Since $0\le r_e<l$, by the Serre vanishing theorem, we have 
\begin{align*} 
&H^i(Y,\O_Y(-iA)\otimes{F_Y^e}_*\O_Y(q_el(B+nA)+(r_e+1)(D-K_Y+\e H)+K_Y)) \\
\cong &H^i(Y,{F_Y^e}_*\O_Y(q_el(B+(n-i)A)+(r_e+1)(D-K_Y+\e H-iA)+K_Y)=0
\end{align*} for each $i>0$ and $e\gg0$. Hence our claim follows from the Castelnuovo-Mumford regularity (\cite[Theorem 1.8.5]{Laz04}). \end{proof}
\begin{prop}\label{prop:t and wp}\samepage Let $Y$ be a normal projective variety, let $\G$ be a coherent sheaf, and let $H$ be an ample divisor. If $t(Y_0,\G|_{Y_0},H|_{Y_0})\ge0$ for some open subset $Y_0 \subseteq Y$ satisfying $\codim(Y\setminus Y_0)\ge 2$, then $\G$ is weakly positive. \end{prop}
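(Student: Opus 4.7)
The plan is to reduce weak positivity, which is a statement about generic global generation of the twisted symmetric powers $(S^{\alpha\beta}\G)^{**}\otimes\O_Y(\beta H')$ for every ample $H'$ and every $\alpha>0$, to a direct application of Proposition \ref{prop:t ggg} to those symmetric powers. First I would invoke Remark \ref{rem:wp}(2) to replace $\G$ by $\G^{**}$ and assume $\G$ is reflexive, and fix arbitrary data $H'$ ample and $\alpha>0$.

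The first genuine step is to propagate the hypothesis $t(Y_0,\G|_{Y_0},H|_{Y_0})\ge 0$ from $\G$ to every symmetric power. This is where the three assertions of Lemma \ref{lem:t} are combined. The natural multiplication $\G^{\otimes m}\twoheadrightarrow S^m\G$ is surjective everywhere, hence generically surjective, so by Lemma \ref{lem:t}(1) and an induction on $m$ using Lemma \ref{lem:t}(2),
\[
t(Y_0,(S^{m}\G)|_{Y_0},H|_{Y_0})\;\ge\;t(Y_0,\G^{\otimes m}|_{Y_0},H|_{Y_0})\;\ge\;m\cdot t(Y_0,\G|_{Y_0},H|_{Y_0})\;\ge\;0
\]
for every $m\ge 1$. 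In particular, this holds with $m=\alpha\beta$, for all $\beta>0$ we might wish to use later.

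The second step is to set up the ampleness needed to feed into Proposition \ref{prop:t ggg}. Choose an effective $\Q$-Weil divisor $\Delta$ on $Y$ such that $K_Y+\Delta$ is $\Q$-Cartier, and fix a base-point-free ample Cartier divisor $A$ on $Y$. Write $n:=\dim Y$ and $t_{\alpha\beta}:=t(Y_0,(S^{\alpha\beta}\G)|_{Y_0},H|_{Y_0})\ge 0$. Since $H'$ is ample, for all $\beta$ larger than some explicit bound depending only on $n$, $K_Y+\Delta$, $A$, and $H'$, the $\Q$-Cartier divisor
\[
\beta H'-(K_Y+\Delta)-nA
\]
is ample; and since $t_{\alpha\beta}\ge 0$ and $H$ is ample, adding $t_{\alpha\beta}H$ to an ample divisor still gives an ample divisor. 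Applying Proposition \ref{prop:t ggg} to the sheaf $S^{\alpha\beta}\G$ with $D=\beta H'$ then shows that $(S^{\alpha\beta}\G)^{**}(\beta H')$ is generically globally generated for every such $\beta$, which is exactly the conclusion of weak positivity.

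The main obstacle is the auxiliary choice of $\Delta$: Proposition \ref{prop:t ggg} is stated under the hypothesis that $K_Y+\Delta$ be $\Q$-Cartier for some effective $\Q$-Weil divisor $\Delta$, and this condition has to be arranged on any normal projective $Y$ to which we want to apply Proposition \ref{prop:t and wp}. In all applications in this paper, $Y$ is smooth and one may take $\Delta=0$; more generally, once such a $\Delta$ is available the argument is purely bookkeeping, combining Lemma \ref{lem:t} (to control the invariant on symmetric powers) with Proposition \ref{prop:t ggg} (to convert this control, together with standard ampleness estimates, into generic global generation).
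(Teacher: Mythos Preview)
Your proof is correct and follows the same line as the paper's: use Lemma~\ref{lem:t} to get $t(Y_0,(S^{\alpha}\G)|_{Y_0},H|_{Y_0})\ge 0$ for all $\alpha$, then feed this into Proposition~\ref{prop:t ggg}; the paper packages the endgame slightly differently by producing a single ample Cartier $D$ with $(S^{\alpha}\G)^{**}(D)$ generically globally generated for every $\alpha$ and invoking Remark~\ref{rem:wp}~(1), whereas you verify the definition for arbitrary $H'$ directly. The obstacle you flag about the auxiliary $\Delta$ is not genuine: on any normal projective $Y$ with $A$ very ample Cartier, the coherent sheaf $\O_Y(-K_Y)\otimes\O_Y(mA)\cong\O_Y(mA-K_Y)$ has a nonzero section for $m\gg0$, giving an effective Weil divisor $\Delta\sim mA-K_Y$ with $K_Y+\Delta$ Cartier.
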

\begin{proof} By the hypothesis and Lemma \ref{lem:t}, we have $$t(Y_0,(S^{\a}\G)^{**}|_{Y_0},H|_{Y_0})\ge t(Y_0,\G^{\otimes \alpha}|_{Y_0},H|_{Y_0})\ge\alpha t(Y_0,\G|_{Y_0},H|_{Y_0})\ge0$$ for every $\a>0$. Applying the previous proposition, we obtain an ample Cartier divisor $D$ such that $(S^{\a}\G)^{**}(D)$ is generically globally generated for every $\a>0$, which is our claim. \end{proof}
The following proposition will be used in Section \ref{section:s-st}.
\begin{prop}\label{prop:t on curve} \samepage Let $Y$ be a smooth projective curve, let $\G$ be a vector bundle on $Y$, and let $H$ be an ample divisor on $Y$. If $t(Y,\G,H)\ge 0$, then $\G$ is nef. Moreover, if $t(Y,\G,H)>0$, then $\G$ is ample. \end{prop}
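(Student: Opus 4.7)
The plan is to reduce both assertions to a single criterion: a vector bundle $\G$ on a smooth projective curve $Y$ is nef (respectively ample) if and only if, for every finite morphism $\pi:C\to Y$ from a smooth projective curve $C$ and every quotient line bundle $L$ of $\pi^{*}\G$, one has $\deg L\geq 0$ (respectively $\deg L>0$). This is a standard characterization coming from the fact that non-constant morphisms $C\to \mathbb{P}(\G)$ correspond to such pairs $(\pi,L)$, together with the fact that $\mathcal{O}_{\mathbb{P}(\G)}(1)|_{C}\cong L$.

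Fix such a pair $(\pi,L)$. The goal is to bound $\deg L$ from below in terms of $t(Y,\G,H)$. For any $\varepsilon\in T(Y,\G,H)$, choose $e>0$ with $p^{e}\varepsilon\in\Z$ such that $(F_{Y}^{e*}\G)(-p^{e}\varepsilon H)$ is generically globally generated on $Y$. Pulling back by the finite morphism $\pi$, and using the Frobenius compatibility $F_{Y}^{e}\circ\pi=\pi\circ F_{C}^{e}$, the sheaf $(F_{C}^{e*}\pi^{*}\G)(-p^{e}\varepsilon\,\pi^{*}H)$ is generically globally generated on $C$, since pullback preserves generic surjectivity of the evaluation map.

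Next, the surjection $\pi^{*}\G\twoheadrightarrow L$ induces, after Frobenius pullback and twisting, a generically surjective map onto the line bundle $(F_{C}^{e*}L)(-p^{e}\varepsilon\,\pi^{*}H)$. Because a quotient of a generically globally generated sheaf is again generically globally generated, this line bundle has a nonzero global section, and hence nonnegative degree. Rewriting,
\[
p^{e}\deg L - p^{e}\varepsilon\,\deg \pi^{*}H \;\geq\; 0,
\qquad\text{i.e.,}\qquad
\deg L \;\geq\; \varepsilon\,\deg \pi^{*}H.
\]
Since $H$ is ample on $Y$ and $\pi$ is finite, $\pi^{*}H$ is ample on $C$, so $\deg \pi^{*}H>0$. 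Taking supremum over $\varepsilon\in T(Y,\G,H)$ (the strict positivity of $\deg \pi^{*}H$ lets the inequality pass to the supremum even when it is not attained) yields
\[
\deg L \;\geq\; t(Y,\G,H)\cdot \deg \pi^{*}H.
\]
If $t(Y,\G,H)\geq 0$ this forces $\deg L\geq 0$; if $t(Y,\G,H)>0$ this forces $\deg L>0$. By the criterion recalled at the outset, $\G$ is nef in the first case and ample in the second.

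The entire argument is essentially formal once the Hartshorne--type criterion is in hand; the only mild delicacy is the passage to the supremum in the last step, which causes no trouble because $\deg \pi^{*}H$ is a fixed positive constant independent of $\varepsilon$.
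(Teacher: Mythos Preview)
Your argument for nefness is correct and in fact more self-contained than the paper's, which simply invokes the earlier Proposition~\ref{prop:t and wp} together with the equivalence of weak positivity and nefness on curves.

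For ampleness, however, the criterion you state at the outset is not true: it is \emph{not} the case that a vector bundle on a curve is ample as soon as every quotient line bundle of every finite pullback has strictly positive degree. A counterexample (in characteristic zero, say) is a stable rank-two bundle $E$ of degree zero on a curve of genus $\ge 2$ whose associated irreducible unitary representation of $\pi_1$ has dense image in $SU(2)$. Then every finite pullback $\pi^*E$ remains stable of degree zero, so every quotient line bundle has degree $\ge 1$; yet $\deg E=0$, so $\O_{\mathbb P(E)}(1)^2=0$ and $E$ is not ample. The point is that $\O(1)$ is strictly nef but sits on the boundary of the nef cone, which is not detected by curves alone.

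The good news is that your computation proves something strictly stronger than what you use: the bound $\deg L\ge t\cdot\deg\pi^*H$ is \emph{uniform} in $(\pi,L)$. With $t>0$ this says exactly that the $\Q$-twisted bundle $\G\langle -tH\rangle$ is nef (by the Barton--Kleiman criterion for nefness, which \emph{is} valid), and then $\G=\G\langle -tH\rangle\langle tH\rangle$ is a nef $\Q$-twist tensored by an ample $\Q$-class, hence ample. So your proof is easily repaired by replacing the false ampleness criterion with this observation.

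For comparison, the paper's route for the ample statement is: since $t>0$, some $({F_Y^e}^*\G)(-H)$ is generically globally generated, hence nef on the curve; then ${F_Y^e}^*\G$ is a nef bundle twisted by an ample line bundle, hence ample; and finally $\G$ is ample because $F_Y^e$ is finite. This is the same idea (show a negative ample twist of $\G$ is nef) carried out without the language of $\Q$-twists.
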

\begin{proof} The first statement follows directly from Proposition \ref{prop:t and wp} and Remark \ref{rem:wp}. We prove the second statement. By the hypothesis, $({F_Y^e}^*\G)(-H)$ is generically globally generated for some $e>0$, so it is a nef vector bundle since $Y$ is a curve. This means that ${F_Y^e}^*\G$ is an ample vector bundle, hence so is $\G$. \end{proof}
\section{Main theorem}\label{section:thm}
In this section, we prove the main theorem (Theorem \ref{thm:main thm}). 
As applications of the theorem, we prove weak positivity theorem for certain surjective morphisms of relative dimension zero, one and two (Corollaries \ref{cor:0 dim}, \ref{cor:1 dim} and \ref{cor:2 dim} respectively).
\begin{thm}\label{thm:main thm}\samepage
Let $f:X\to Y$ be a separable surjective morphism between normal projective varieties, let $\Delta$ be an effective $\Q$-Weil divisor on $X$ such that $a\Delta$ is integral for some integer $a>0$ not divisible by $p$, and let $\ol \eta$ be the geometric generic point of $Y$. Let $H$ be an ample Cartier divisor on $Y$. Assume that \begin{itemize} \item[(i)] $K_{X_{\ol\eta}}+\Delta_{\ol\eta}$ is finitely generated in the sense of Definition \ref{defn:fg}, and \item[(ii)] there exists an integer $m_0>0$ such that $$S^0(X_{\ol\eta},\Delta_{\ol \eta},am(K_{X_{\ol \eta}}+\Delta)_{\ol \eta})=H^0(X_{\ol\eta},am(K_{X_{\ol \eta}}+\Delta)_{\ol \eta})$$ for each $m\ge m_0$.  \end{itemize} Then 
$f_*\O_X(am(K_{X}+\Delta))\otimes\o_Y^{-am}$ is a weakly positive sheaf for every $m\ge m_0$. %
\end{thm}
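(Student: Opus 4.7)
The plan is to prove $t(Y_0,\G|_{Y_0},H|_{Y_0})\ge 0$ for $\G:=f_*\O_X(am(K_{X/Y}+\Delta))$ and a smooth open $Y_0\subseteq Y$ with $\codim_Y(Y\setminus Y_0)\ge 2$ over which $f$ is flat; weak positivity of $\G=f_*\O_X(am(K_X+\Delta))\otimes\o_Y^{-am}$ will then follow from Proposition \ref{prop:t and wp}. I restrict silently to $Y_0$, abbreviate $\G_N:=f_*\O_X(N(K_{X/Y}+\Delta))$, and set $N_e:=(am-1)p^{de}+1$, where $d$ is the smallest positive integer with $a\mid p^d-1$ (so that $a\mid N_e$). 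The heart of the argument is to pair two opposing inequalities for $t(\G_{N_e})$: one coming from the relative-Frobenius trace and one from multiplication in the relative canonical algebra.

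First I would establish the \emph{trace inequality} $t(\G_{N_e})\le p^{de}\,t(\G)$. Twisting the relative trace $\phi^{(de)}_{(X,\Delta)/Y}$ by $\O_X(am(K_{X/Y}+\Delta))_{Y^{de}}$, applying the projection formula, and pushing down via $f_{Y^{de}}$ (flat base change is available over $Y_0$) produces a morphism $\theta^{(de)}\colon\G_{N_e}\to(F_Y^{de})^*\G$. Assumption (ii) forces the image of $\phi^{(de)}_{(X_{\ol\eta},\Delta_{\ol\eta})}$ on $H^0$ to equal $S^0=H^0$ for \emph{every} $e$, since the images form a decreasing chain with intersection $H^0$. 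The cohomology-and-base-change arguments in Observation \ref{obs:lower semi conti} then spread this generic-fiber surjectivity to generic surjectivity of $\theta^{(de)}$ on $Y_0^{de}$, and Lemma \ref{lem:t}(1),(3) give the inequality.

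Second I would establish the \emph{multiplication inequality} $r_e\,t(\G)-C\le t(\G_{N_e})$ with $r_e:=\lfloor N_e/(am)\rfloor$ and a constant $C>0$ independent of $e$. By assumption (i), combined with spreading-out, the relative canonical algebra $\bigoplus_s\G_s$ is a finitely generated $\O_{Y_0}$-algebra; after enlarging $m_0$ if necessary I may assume it is generated in degrees $\le am$. Writing $N_e=r_e\cdot am+s_e$ with $0\le s_e<am$, the multiplication map
\[
\G^{\otimes r_e}\otimes f_*\O_X(s_e(K_{X/Y}+\Delta))\twoheadrightarrow\G_{N_e}
\]
is then generically surjective. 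The finite family $\{f_*\O_X(s(K_{X/Y}+\Delta))\}_{0\le s<am}$ of coherent sheaves admits, by Serre vanishing, a uniform $C$ with $t\ge -C$; Lemma \ref{lem:t}(1),(2) then produce the desired bound.

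Combining the two inequalities yields $(p^{de}-r_e)\,t(\G)\ge -C$. Since $p^{de}-r_e=(p^{de}-1+s_e)/(am)\ge(p^{de}-1)/(am)\to\infty$ as $e\to\infty$, any assumption $t(\G)<0$ would give $|t(\G)|\le C\cdot am/(p^{de}-1)\to 0$, a contradiction; hence $t(\G)\ge 0$. The main obstacle is the generic surjectivity of the multiplication map: one must use assumption (i) to control the generators of the relative canonical algebra on the geometric generic fiber, spread them to $Y_0$, and rule out pathologies in the intermediate degrees $0\le s<am$ where the fiberwise $H^0$ could vanish. Once that is in hand, the trace and multiplication inequalities dovetail cleanly to pin $t(\G)$ down to the non-negative range.
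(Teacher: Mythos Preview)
Your overall architecture --- a trace inequality from the relative Frobenius paired against a multiplication inequality from the canonical ring, then letting $e\to\infty$ --- is exactly the paper's strategy, and your trace inequality $t(\G_{N_e})\le p^{de}t(\G_{am})$ is correct. The gap is in the multiplication inequality. You claim that, after enlarging $m_0$, the map $\G_{am}^{\otimes r_e}\otimes\G_{s_e}\to\G_{N_e}$ is generically surjective. First, enlarging $m_0$ is illegitimate: the theorem must hold for every $m\ge m_0$ with the \emph{given} $m_0$ from hypothesis (ii). Second, and more fundamentally, even if the canonical ring $R=\bigoplus_n H^0(X_{\ol\eta},an(K_{X_{\ol\eta}}+\Delta_{\ol\eta}))$ is generated in degrees $\le m$, this does \emph{not} imply $R_m^{r_e}\cdot R_{s_e/a}=R_{N_e/a}$. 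For instance, $k[x,y]$ with $\deg x=2$, $\deg y=3$ is generated in degrees $\le 3$, yet $R_3^2\cdot R_1=0\ne R_7$. The pathology you flag (vanishing of intermediate $H^0$) is only one symptom; the failure of surjectivity persists even when all graded pieces are nonzero.

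The paper repairs this with a two-step bootstrap that your one-step argument collapses. One fixes $l>m_0$ divisible enough that $R_l\cdot R_n=R_{l+n}$ for all $n\ge n_0$ --- this is what finite generation actually guarantees, for \emph{some} $l$ but not for an arbitrary $m$. Iterating gives $q\,t(\G_{al})+c\le t(\G_{a(ql+r+n_0)})$, where $c:=\min_{0\le r<l}t(\G_{a(r+n_0)})$. Combining with the trace inequality \emph{first for $m=l$} yields $(p^{de}-q_{l,e})\,t(\G_{al})\ge c$ with $p^{de}-q_{l,e}\to\infty$, whence $t(\G_{al})\ge 0$. Only then, for arbitrary $m\ge m_0$, does the combined inequality read $p^{de}t(\G_{am})\ge q_{m,e}\,t(\G_{al})+c\ge c$, forcing $t(\G_{am})\ge 0$. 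Your attempt to run both sides with the same $m$ requires multiplication surjectivity at the specific level $am$, which finite generation alone does not supply.
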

\begin{proof} We first note that $X_{\ol\eta}$ is a $k(\ol\eta)$-scheme of pure dimension satisfying $S_2$ and $G_1$, and that each connected component of $X_{\ol\eta}$ is integral by the Stein factorization and separability of $K(X)/K(Y)$.  Let $d>0$ be an integer satisfying $a|(p^d-1)$. \\
\tb{Step1.} In this step we reduce to the case where $X$ and $Y$ are smooth. Let $H$ be an ample Cartier divisor on $Y$. By Proposition \ref{prop:t and wp}, it suffice to prove that $t(Y_{0},(f_*\O_X(am(K_X+\Delta)))|_{Y_{0}}\otimes\o_{Y_{0}}^{-am},H|_{Y_{0}})\ge0$ for each $m\ge m_0$, where $Y_{0}\subseteq Y$ is an open subset satisfying $\codim(Y\setminus Y_0)\ge2$. Hence, replacing $X$ and $Y$ by their smooth loci, we may assume that $f$ is a dominant morphism between smooth varieties (the projectivity of $f$ may be lost, but we will not use it). \par
We set $t(m):=t(Y,f_*\O_X(am(K_{X/Y}+\Delta)),H)$ for each $m>0$. \\ 
\tb{Step2.} We show that there exist integers $l,n_0>m_0$ such that $t(l)+t(n)\le t(l+n)$ for each $n\ge n_0$. By the hypothesis (i) and Lemma \ref{lem:fg}, $R(X_{\ol\eta},a(K_{X_{\ol\eta}}+\Delta)_{\ol\eta})$ is a finitely generated $k(\ol\eta)$-algebra. Hence for every $l>m_0$ divisible enough there exists an $n_0>m_0$ such that the natural morphism $$H^0(X_{\ol \eta},al(K_{X_{\ol \eta}}+\Delta)_{\ol\eta})\otimes H^0(X_{\ol\eta},an(K_{X_{\ol\eta}}+\Delta)_{\ol\eta})\to H^0(X_{\ol\eta},a(l+n)(K_{X_{\ol\eta}}+\Delta)_{\ol\eta})$$ is surjective. This shows that the natural morphism $$f_*\O_X(al(K_{X/Y}+\Delta))\otimes f_*\O_X(an(K_{X/Y}+\Delta))\to f_*\O_X(a(l+n)(K_{X/Y}+\Delta))$$ is generically surjective, thus we have $ t(l)+t(n)\le t(l+n)$ by Lemma \ref{lem:t}. \\
\tb{Step3.} We show that $t(mp^{de}-a^{-1}(p^{de}-1))\le p^{de}t(m)$ for each $e>0$ and for each $m\ge m_0$. By the hypothesis (ii) and Observation \ref{obs:lower semi conti} (I\hspace{-1pt}I\hspace{-1pt}I), there exist generically surjective morphisms \begin{align*} {f^{(de)}}_*\O_{X^{de}}(((am-1)p^{de}+1)(K_{X^{de}/Y^{de}}+\Delta))\xrightarrow{\alpha} &{f_{Y^{de}}}_*\O_{X_{Y^{de}}}(am(K_{X_{Y^{de}}/Y^{de}}+\Delta_{Y^{de}}))\\\cong&{F_Y^{de}}^*f_*\O_X(am(K_{X/Y}+\Delta)),\end{align*} 
where $\alpha:={f_{Y^{de}}}_*(\phi^{(de)}_{(X,\Delta)/Y}\otimes\O_{X_{Y^{de}}}(am(K_{X_{Y^{de}}/Y^{de}}+\Delta_{Y^{de}})))$, and the isomorphism follows from the flatness of $F_Y$.
Hence by Lemma \ref{lem:t}, we have $$t(mp^{de}-a^{-1}(p^{de}-1))\le t(Y,{F_Y^{de}}^*f_*(\o_{X/Y}^{am}(am\Delta)),H)=p^{de}t(m).$$
\tb{Step4.} We prove the theorem. Set $m\ge m_0$. If $am=1$, then $t(1)\le p^{d}t(1)$ by Step3, which gives $t(1)\ge0$. Thus we may assume $am_0\ge2$. Let $q_{m,e}$ be the quotient of $mp^{de}-a^{-1}(p^{de}-1)-n_0$ by $l$ and let $r_{m,e}$ be the remainder for $e\gg0$. We note that $q_{m,e}>0$ since $m\ge m_0\ge2a^{-1}>a^{-1}$, and that $p^{de}-q_{l,e}\xrightarrow{e\to\infty}\infty$. Then \begin{align*}&q_{m,e}t(l)+t(r_{m,e}+n_0)\overset{Step2}{\le}t(mp^{de}-a^{-1}(p^{de}-1))\overset{Step3}{\le}p^{de}t(m),\end{align*} and so $c:=\min\{t(r+n_0)|0\le r<l\}\le p^{de}t(m)-q_{m,e}t(l).$ By substituting $l$ for $m$, we have $c\le (p^{de}-q_{l,e})t(l)$ for each $e\gg0$, which means $t(l)\ge0$. Hence $c\le p^{de}t(m)$ for each $e\gg0$, and consequently $t(m)\ge0$. This completes the proof.  \end{proof}
\begin{rem} \label{rem:Raynaud-Xie} There exists a fibration $g:S\to C$ from a smooth projective surface $S$ to a smooth projective curve $C$ such that $g_*\o_{S/C}^m$ is not nef for any $m>0$ \cite{Ray78}\cite[Theorem~3.6]{Xie10}. This fibration does not satisfy condition (ii) of Theorem \ref{thm:main thm}. Indeed, the geometric generic fiber of $g$ is a Gorenstein curve which has a cusp, hence by \cite{GW77} it is not $F$-pure. Since the dualizing sheaf of a Gorenstein curve not isomorphic to $\mathbb P^1$ is trivial or ample, the claim follows from Examples \ref{eg:gl F-sp} and \ref{eg:rs for ample div}. \end{rem}
\begin{cor}\label{cor:0 dim}\samepage Let $f:X\to Y$ be a surjective morphism between normal projective varieties, let $\Delta$ be an effective $\Q$-Weil divisor on $X$, and let $a>0$ be an integer such that $a\Delta$ is integral. 
If $f$ is separable and generically finite, then $f_*\O_X(a(K_{X}+\Delta))\otimes\o_Y^{-a}$ is weakly positive.  \end{cor}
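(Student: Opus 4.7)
The plan is to derive Corollary \ref{cor:0 dim} directly from Theorem \ref{thm:main thm} by verifying its two hypotheses with $m_0 = 1$ and then specializing the conclusion to $m = 1$. The key observation is that when $f$ is generically finite and separable, the geometric generic fiber $X_{\ol\eta}$ collapses to a zero-dimensional smooth scheme over the algebraically closed field $k(\ol\eta)$, which makes both assumptions of the main theorem tautological.

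Concretely, $X_{\ol\eta}$ is a finite disjoint union of copies of $\Spec k(\ol\eta)$, on which every Cartier (or Weil) divisor is zero. Hence $K_{X_{\ol\eta}} = 0$, $(a\Delta)_{\ol\eta} = 0$, and the dualizing sheaf coincides with $\O_{X_{\ol\eta}}$. Hypothesis (i) of Theorem \ref{thm:main thm} then holds trivially: the canonical ring $\bigoplus_{m \ge 0} H^0(X_{\ol\eta}, m(aK_{X_{\ol\eta}} + (a\Delta)_{\ol\eta}))$ is just a polynomial ring in one variable over the finite $k(\ol\eta)$-algebra $H^0(X_{\ol\eta}, \O_{X_{\ol\eta}})$, so in particular it is finitely generated as a $k(\ol\eta)$-algebra.

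For hypothesis (ii) I would take $m_0 = 1$. On each connected component of $X_{\ol\eta}$, the morphism $\phi^{(e)}_{(X_{\ol\eta}, \Delta_{\ol\eta})}$ reduces to (a twist of) the trace of the absolute Frobenius of the field $k(\ol\eta)$. Because $k(\ol\eta)$ is algebraically closed, this Frobenius is bijective and so is its trace dual, which forces
$$S^0(X_{\ol\eta}, \Delta_{\ol\eta}, am(K_{X_{\ol\eta}} + \Delta_{\ol\eta})) = H^0(X_{\ol\eta}, am(K_{X_{\ol\eta}} + \Delta_{\ol\eta}))$$
for every $m \ge 1$.

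With both hypotheses in hand, Theorem \ref{thm:main thm} yields the weak positivity of $f_*\O_X(am(K_X + \Delta)) \otimes \o_Y^{-am}$ for every $m \ge 1$; specializing to $m = 1$ is exactly the statement of the corollary. There is no genuine obstacle in the argument; the only mild subtlety is identifying $(a\Delta)_{\ol\eta}$ with the zero divisor, which follows at once from the fact that every Cartier divisor on a zero-dimensional reduced scheme over a field is trivial.
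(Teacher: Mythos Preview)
Your approach is essentially the same as the paper's---both deduce the corollary from Theorem~\ref{thm:main thm} by observing that the geometric generic fiber is a reduced zero-dimensional $k(\ol\eta)$-scheme, so conditions (i) and (ii) are vacuous. There is one small but genuine gap, however: Theorem~\ref{thm:main thm} requires $p\nmid a$, whereas the corollary places no such restriction on $a$. Your direct application with the given $a$ and $m=1$ therefore only covers the case $p\nmid a$.

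The paper handles this by first reducing to $\Delta=0$: since $f$ is generically finite, the inclusion $f_*\O_X(aK_X)\otimes\o_Y^{-a}\hookrightarrow f_*\O_X(a(K_X+\Delta))\otimes\o_Y^{-a}$ is an isomorphism at the generic point of $Y$, and weak positivity passes along generically surjective maps. One may then invoke Theorem~\ref{thm:main thm} with $\Delta=0$, $a'=1$, and $m=a$, which works for any $a$. Your observation that $(a\Delta)_{\ol\eta}=0$ is correct but does not by itself let you change the value of $a$ appearing in the conclusion of Theorem~\ref{thm:main thm}; the reduction to $\Delta=0$ on $X$ (not just on $X_{\ol\eta}$) is what makes this possible.
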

\begin{proof} 
Since $f$ is generically finite, the natural morphism $$f_*\O_X(aK_X)\otimes\o_Y^{-a}\to f_*\O_X(a(K_X+\Delta))\otimes\o_Y^{-a}$$ is an isomorphism at the generic point of $Y$. Thus it is enough to show the case of $\Delta=0$.
Since the geometric generic fiber $X_{\ol\eta}$ is a reduced $k(\ol\eta)$-scheme of dimension zero, the assertion follows directly from Theorem \ref{thm:main thm}.
\end{proof}
\begin{cor}\label{cor:1 dim}\samepage
Let $f:X\to Y$ be a separable surjective morphism of relative dimension one between normal projective varieties, let $\Delta$ be an effective $\Q$-Weil divisor on $X$, and let $a>0$ be an integer not divisible by $p$ such that $a\Delta$ is integral. 
Assume that $(X_{\ol\eta},\Delta_{\ol\eta})$ is $F$-pure, where $\ol\eta$ is the geometric generic point of $Y$. If $K_{X_{\ol\eta}}+\Delta_{\ol\eta}$ is ample $($resp. $K_{X_{\ol\eta}}$ is ample and $a\ge2$$)$, then $f_*\O_X(am(K_X+\Delta))\otimes\o_Y^{-am}$ is weakly positive for each $m\ge2$ $($resp. $m\ge1$$)$. 
In particular, if every connected component of $X_{\ol\eta}$ is a smooth curve of genus at least two, then $f_*\o_X^m\otimes\o_Y^{-m}$ is weakly positive for each $m\ge2$.
\end{cor}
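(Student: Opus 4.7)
The plan is to apply Theorem \ref{thm:main thm} directly to $f$, $\Delta$, and $a$: the statement reduces to verifying the two hypotheses of that theorem at the geometric generic fiber $X_{\ol\eta}$. By the argument at the start of the proof of Theorem \ref{thm:main thm}, each connected component of $X_{\ol\eta}$ is integral (separability of $K(X)/K(Y)$ via the Stein factorization), and since $(X_{\ol\eta},\Delta_{\ol\eta})$ is assumed to be $F$-pure we are automatically in the setting of Notation \ref{notation:pair}; in particular $X_{\ol\eta}$ is a Gorenstein projective curve on each connected component.

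Hypothesis (i) of Theorem \ref{thm:main thm}, the finite generation of $\bigoplus_{m\ge0}H^0(X_{\ol\eta},am(K_{X_{\ol\eta}}+\Delta_{\ol\eta}))$, is automatic: in the first case $a(K_{X_{\ol\eta}}+\Delta_{\ol\eta})$ is an ample Cartier divisor on a projective scheme and its section ring is therefore a finitely generated $k(\ol\eta)$-algebra by the standard theory of ample line bundles on projective schemes; in the second case the same conclusion follows from ampleness of $aK_{X_{\ol\eta}}$ together with the effectivity of $a\Delta_{\ol\eta}$. For hypothesis (ii), I invoke Corollary \ref{cor:rs for F-pure curve}, applied componentwise to $(X_{\ol\eta},\Delta_{\ol\eta})$: the $F$-purity combined with the respective ampleness assumption yields
\[ S^0(X_{\ol\eta},\Delta_{\ol\eta},am(K_{X_{\ol\eta}}+\Delta_{\ol\eta}))=H^0(X_{\ol\eta},am(K_{X_{\ol\eta}}+\Delta_{\ol\eta})) \]
for every $m\ge 2$ in the first case, and for every $m\ge 1$ in the second case (where $K_{X_{\ol\eta}}$ is ample and $a\ge 2$). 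Taking $m_0=2$ (resp.\ $m_0=1$), Theorem \ref{thm:main thm} then delivers weak positivity of $f_*\O_X(am(K_X+\Delta))\otimes\o_Y^{-am}$ for all $m$ in the claimed range.

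The concluding ``in particular'' sentence is the special case $\Delta=0$, $a=1$: a smooth curve of genus at least two is automatically $F$-pure (regularity provides the local splitting of Frobenius) and has ample canonical divisor, so the first case applies and yields weak positivity of $f_*\o_X^m\otimes\o_Y^{-m}\cong f_*\O_X(mK_X)\otimes\o_Y^{-m}$ for every $m\ge 2$. Since this corollary simply packages Theorem \ref{thm:main thm} together with Corollary \ref{cor:rs for F-pure curve}, no substantive obstacle arises inside this proof itself; all the content has already been established in those two results, and the only thing to check is that the curve $X_{\ol\eta}$ meets the hypotheses of both, which is what the two previous paragraphs accomplish.
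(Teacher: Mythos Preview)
Your proof is correct and follows the same route as the paper: reduce to Theorem~\ref{thm:main thm} by checking its hypotheses via Corollary~\ref{cor:rs for F-pure curve}. The only difference is how you justify that $X_{\ol\eta}$ is a Gorenstein curve with $(a\Delta)_{\ol\eta}$ Cartier: you infer this from the very fact that the $F$-purity hypothesis is stated (placing us in Notation~\ref{notation:pair}, and $S_2+G_1$ for a one-dimensional scheme forces Gorenstein), whereas the paper argues directly that since $X$ is normal and $a\Delta$ integral, the locus where $X$ fails to be Gorenstein or $a\Delta$ fails to be Cartier has codimension $\ge 2$ in $X$, hence dimension $\le \dim Y-1$, so it cannot dominate $Y$ and therefore misses the generic fiber. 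Both justifications are valid; the paper's is independent of the $F$-purity assumption and makes explicit why $\Delta_{\ol\eta}$ is well-defined in the first place.
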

\begin{proof} 
Let $U\subseteq X$ be a Gorenstein open subset such that $\codim(X\setminus U)\ge2$ and that $a\Delta|_{U}$ is Cartier. Since $\dim (X\setminus U)\le\dim X-2=\dim Y-1$, $X\setminus U$ does not dominate $Y$.
Thus there exists an open subset $Y_0\subseteq Y$ such that $f|_{X_0}:X_0\to Y_0$ is a Gorenstein morphism and that $a\Delta|_{X_0}$ is Cartier, where $X_0:=f^{-1}(Y_0)$. In particular, $X_{\ol\eta}$ is Gorenstein and $(a\Delta)_{\ol\eta}$ is Cartier. 
Thus the statement follows directly from Corollary \ref{cor:rs for F-pure curve} and Theorem \ref{thm:main thm}. 
\end{proof}
\begin{cor}\label{cor:2 dim}\samepage
Let $f:X\to Y$ be a separable surjective morphism of relative dimension two between normal projective varieties. Assume that every connected component of geometric generic fiber is a normal surface of general type with rational double point singularities, and $p\ge7$. Then $f_*\o_{X}^m\otimes\o_Y^{-m}$ is weakly positive sheaf for each $m\gg0$. \end{cor}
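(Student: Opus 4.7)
The plan is to apply Theorem \ref{thm:main thm} directly to $f$ with the choice $\Delta=0$ and $a=1$ (which is coprime to $p$); under this choice the conclusion of that theorem specializes to $f_*\O_X(mK_X)\otimes\o_Y^{-m}=f_*\o_X^m\otimes\o_Y^{-m}$ being weakly positive for all $m\ge m_0$, which is exactly what is required. So the task reduces to verifying hypotheses (i) and (ii) of Theorem \ref{thm:main thm} for the geometric generic fiber $X_{\ol\eta}$.

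Since the geometric generic fiber may be disconnected, I would first decompose $X_{\ol\eta}=Z_1\sqcup\cdots\sqcup Z_r$ into its connected components. By hypothesis each $Z_i$ is a normal projective surface of general type with only rational double point singularities, and both $H^0$ and $S^0$ are additive over connected components (the trace map $\phi^{(de)}_{X_{\ol\eta}}$ splits as a direct sum), so the two conditions in question decompose correspondingly into conditions on each $Z_i$.

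For condition (i), the canonical ring $R(Z_i,K_{Z_i})$ is finitely generated for each $i$ by the classical finite-generation theorem for surfaces of general type in positive characteristic (equivalently, $Z_i$ is already its own canonical model, or one invokes finite generation for the minimal resolution via Mumford's surface theory); the canonical ring of the disjoint union is the direct sum of these and hence is also a finitely generated $k(\ol\eta)$-algebra. For condition (ii), I would apply Corollary \ref{cor:surf of gen type} to each $Z_i$: since $p\ge 7$, it yields that $R/R_S(Z_i,K_{Z_i})$ is a finite-dimensional $k(\ol\eta)$-vector space, which (as explained after Notation \ref{notation:r/rs}) is equivalent to the existence of an integer $m_{0,i}>0$ such that $S^0(Z_i,mK_{Z_i})=H^0(Z_i,mK_{Z_i})$ for every $m\ge m_{0,i}$. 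Setting $m_0:=\max_i m_{0,i}$ and summing over components gives the required equality $S^0(X_{\ol\eta},mK_{X_{\ol\eta}})=H^0(X_{\ol\eta},mK_{X_{\ol\eta}})$ for every $m\ge m_0$.

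There is no substantive obstacle here: the entire arithmetic content is packaged into Corollary \ref{cor:surf of gen type} (which in turn relies on Fedder's criterion combined with the explicit classification of rational double points in characteristic $p\ge 7$), together with the main theorem. The only piece of bookkeeping is handling a potentially disconnected geometric generic fiber, which is straightforward because all the relevant invariants decompose over connected components.
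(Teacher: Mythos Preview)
Your proposal is correct and follows essentially the same approach as the paper: verify hypothesis (i) via finite generation of the canonical ring of surfaces of general type (the paper cites \cite[Corollary~9.10]{Bad01}), verify hypothesis (ii) via Corollary~\ref{cor:surf of gen type}, and then apply Theorem~\ref{thm:main thm} with $\Delta=0$ and $a=1$. Your explicit treatment of the possibly disconnected geometric generic fiber is a bit more detailed than the paper's proof, but this is only bookkeeping that the paper leaves implicit.
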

\begin{proof} We note that in this case $K_{X_{\ol\eta}}$ is finitely generated (cf. \cite[Corollary~9.10]{Bad01}). This follows from Corollary \ref{cor:surf of gen type} and Theorem \ref{thm:main thm}. \end{proof}
\section{Semi-stable fibration} \label{section:s-st}
In this section, we discuss the weak positivity theorem in the case of a semi-stable fibration. We begin by recalling some definitions. 
\begin{defn} \samepage A projective $k$-scheme $C$ of dimension one is said to be {\it minimally semi-stable} if: \begin{itemize} \item it is reduced and connected,  \item all the singular points are ordinary double point, and \item each irreducible component which is isomorphic to $\mb{P}^1$ meets other components in at least two points.  \end{itemize} \end{defn}
\begin{defn} \samepage Let $X$ be a smooth projective surface, and let $Y$ be a smooth projective curve. A fibration $f:X\to Y$ is called {\it semi-stable fibration} if all the fibers are minimally semi-stable. \end{defn}
\begin{defn} \samepage A fibration $f:X\to Y$ is said to be {\it isotrivial} if for every two closed fibers are isomorphic. \end{defn}
\begin{notation} \label{notation:s-st} \samepage Let $f:X\to Y$ be a semi-stable fibration whose geometric generic fiber $X_{\ol\eta}$ is a smooth curve of genus at least two.  \end{notation}
In the situation of Notation $\ref{notation:s-st}$, by \cite[4.3~Theorem]{Kol90} or by Corollary \ref{cor:1 dim}, $f_*\o_{X/Y}^m$ is a nef vector bundle for each $m\ge 2$. In the first part of this section, we give a necessary and sufficient condition in terms of $f$ for these vector bundles to be ample (Theorem \ref{thm:ample}). In the second part, we consider the positivity of $f_*\o_{X/Y}$. 
\begin{eg} \label{eg:isotrivial} In the situation of \ref{notation:s-st}, assume that $f$ is isotrivial. Then there exists a finite morphism $\vp:Y'\to Y$ from a smooth projective curve $Y'$ such that there exists a commutative diagram  \[\xymatrix{ Z \ar[d] & Z\times_k Y' \ar[l]_(0.6)g \ar[r]^(0.6){\cong} \ar[d]^h & X_{Y'} \ar[r] \ar[d]^{f_{Y'}} & X \ar[d]^f \\ k & Y' \ar@{=}[r] \ar[l] & Y' \ar[r]^{\vp} & Y, } \] where $Z$ is a closed fiber of $f$ \cite{Szp79}. Let $D$ be an effective divisor on $X$ such that $(K_{X/Y}\cdot D)=0$. Then since $\o_{Z\times_kY'/Y'}\cong g^*\o_Z$, and since $\o_Z$ is ample, each irreducible component of $D_{Y'}$ is a fiber of $g$, thus we have $D_{Y'}\sim g^*E$ for some effective divisor $E$ on $Z$. Then for each integer $m\ge1$, we have $$ \vp^*f_*\o_{X/Y}^m(D) \cong h_*\o_{(Z\times_k Y')/Y'}^m(D_{Y'}) \cong h_*g^*\o_{Z}^m(E) \cong H^0(Z,\o_Z^m(E))\otimes_k \O_{Y'}, $$ and the right-hand side is trivial, so is $\vp^*f_*\o_{X/Y}^m(D)$. In particular, $f_*\o_{X/Y}^m(D)$ is not ample for each $m\ge1$. \end{eg}
In order to prove Theorem \ref{thm:ample}, we recall some results due to Szpiro and due to Tanaka. 
\begin{thm}[\textup{\cite[Th\'eor\`em~1]{Szp79}}] \label{thm:Szp} \samepage
In the situation of Notation $\ref{notation:s-st}$, assume that $f$ is non-isotrivial. Then $\o_{X/Y}$ is nef and big. Furthermore, an integral curve $C$ in $X$ satisfies $(\o_{X/Y}\cdot C)=0$ if and only if $C$ is a smooth rational curve with $(C^2)=-2$ contained in a fiber. \end{thm}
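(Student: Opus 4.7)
The strategy follows Szpiro's classical argument: first establish nefness of $\o_{X/Y}$ by splitting into vertical and horizontal curves, then deduce bigness from non-isotriviality via a positivity estimate on $\deg f_*\o_{X/Y}$, and use both to classify the zero-intersection curves.

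For the vertical case, I would apply adjunction on the smooth surface $X$ to an irreducible component $C$ of a fiber $F$. Since $F$ is nodal and semi-stable, $C$ is smooth, and $F\cdot C=0$ forces $C^2=-\delta_C$, where $\delta_C$ is the number of nodes of $F$ lying on $C$ shared with other components. Using $f^*K_Y\cdot C=0$, adjunction yields $(K_{X/Y}\cdot C)=2g(C)-2+\delta_C$, which is nonnegative because semi-stability forces $\delta_C\ge 2$ whenever $C\cong\mathbb P^1$. Equality holds exactly when $g(C)=0$ and $\delta_C=2$, i.e.\ when $C$ is a smooth rational $(-2)$-curve, giving both directions of the characterization for curves contained in fibers.

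For the horizontal case, I would pass to the normalization $\widetilde D\to Y$ of a horizontal curve $D$ and take a semi-stable reduction $f':X'\to Y'$ of the base-changed fibration, so that the preimage of $D$ contains a section of $f'$. By Corollary~\ref{cor:1 dim}, $f'_*\o_{X'/Y'}^m$ is nef on $Y'$ for every $m\ge 2$; the evaluation map $f'^*f'_*\o_{X'/Y'}^m\to\o_{X'/Y'}^m$ is surjective on the generic fiber, hence restricts along the section to a generically surjective map onto a line bundle, showing that $\o_{X'/Y'}^m$ restricted to the section has nonnegative degree. Unwinding the base change then gives $(\o_{X/Y}\cdot D)\ge 0$, completing the proof of nefness. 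For bigness, I would invoke the Arakelov-type inequality $\deg f_*\o_{X/Y}>0$ for non-isotrivial semi-stable $f$, combined with the relative Noether identity
\[
\o_{X/Y}^2 \;=\; 12\deg f_*\o_{X/Y} \;-\; \sum_{y\in Y}\delta_y,
\]
together with Szpiro's finer estimates on the distribution of singular fibers, to obtain $\o_{X/Y}^2>0$. Strict positivity of $(\o_{X/Y}\cdot D)$ for horizontal $D$, required for the full characterization of zero-intersection curves, then follows once bigness is established, via Hodge index applied to the nef and big divisor $\o_{X/Y}$ on the smooth surface $X$.

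The hard part is the Arakelov-type inequality $\deg f_*\o_{X/Y}>0$ for non-isotrivial semi-stable $f$ in positive characteristic. In characteristic zero this is Arakelov's classical inequality proved via Hodge theory, which is unavailable here; Szpiro's positive-characteristic proof instead proceeds through a careful analysis of the Kodaira--Spencer map and its interaction with Frobenius descent, using the semi-stability hypothesis essentially to rule out $\deg f_*\o_{X/Y}=0$ in the non-isotrivial case. The horizontal nefness step also requires some care because the semi-stable reduction may introduce new $(-2)$-curves, so one must check that the section one isolates from the preimage of $D$ is not absorbed into the exceptional locus; this is ensured by choosing $\widetilde D\to Y$ to factor the stable-reduction cover.
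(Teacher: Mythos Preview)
The paper does not prove this theorem at all: it is quoted verbatim from \cite[Th\'eor\`eme~1]{Szp79} and used as a black box in the proof of Theorem~\ref{thm:ample}. So there is no ``paper's own proof'' to compare against, and your proposal should be read as an attempt to reconstruct Szpiro's argument rather than to match anything in this paper.

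Your treatment of vertical curves is correct and is essentially the standard adjunction computation. Your idea of handling horizontal curves via base change to a section and then invoking Corollary~\ref{cor:1 dim} is logically non-circular within this paper (nothing in Sections~\ref{section:rs}--\ref{section:thm} uses Theorem~\ref{thm:Szp}), and it is an interesting alternative to Szpiro's original approach; but you should be aware that it is anachronistic and that the comparison of $\o_{X'/Y'}$ with the pullback of $\o_{X/Y}$ after a possibly ramified base change plus semi-stable reduction requires a genuine computation with conductor and exceptional divisors that you have only gestured at.

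The real gap is in your bigness argument. The relative Noether formula $\o_{X/Y}^2 = 12\deg f_*\o_{X/Y} - \sum_y \delta_y$ does \emph{not} by itself give $\o_{X/Y}^2>0$ from $\deg f_*\o_{X/Y}>0$: the singular-fiber contribution $\sum_y\delta_y$ can exceed $12\deg f_*\o_{X/Y}$ in general, and you concede this by appealing to unstated ``finer estimates of Szpiro''. In fact Szpiro's proof of bigness does not go through Noether at all; he obtains \emph{strict} positivity of $(\o_{X/Y}\cdot D)$ on horizontal curves directly, by reducing to a section and analyzing the Kodaira--Spencer class under Frobenius, and bigness then follows because $\o_{X/Y}$ is nef with strictly positive intersection against the ample class $\o_{X/Y}+f^*H$. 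Your sketch treats the strict horizontal positivity as a consequence of bigness via Hodge index, which reverses the actual logical order and leaves both statements unproved.
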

\begin{thm}[\textup{\cite[Theorem~2.6]{Tan12}}] \label{thm:wKV} \samepage
Let $X$ be a smooth projective surface, let $B$ be a nef big $\R$-divisor whose fractional part is simple normal crossing, and let $N$ be a nef $\R$-divisor which is not numerically trivial. Then there exists a real number $r(B,N)>0$ such that \[ H^i(X,K_X+\lceil B\rceil+rN+N')=0 \] for each $i>0$, for every real number $r\ge r(B,N)$, and for every nef $\R$-divisor $N'$ such that $rN+N'$ is a $\Z$-divisor. \end{thm}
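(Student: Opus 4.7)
The plan is to reduce the desired vanishing to Fujita's vanishing theorem (which is valid in arbitrary characteristic) by expressing $rN+N'$ as an ample Cartier divisor plus a nef Cartier divisor once $r$ is large. I would first fix an ample Cartier divisor $H$ on $X$ and apply Fujita vanishing to the coherent sheaf $\mathcal{F}:=\mathcal{O}_X(K_X+\lceil B\rceil)$, obtaining an integer $m_0>0$ such that $H^i(X,\mathcal{F}\otimes\mathcal{O}_X(m_0 H+E))=0$ for every $i>0$ and every nef Cartier divisor $E$. The task then reduces to exhibiting a decomposition $rN+N'\equiv m_0 H+E$ with $E$ a nef Cartier divisor, uniformly in the admissible $N'$.

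If $N$ were ample, such a decomposition would be immediate for $r$ large, since $rN-m_0 H$ would then be ample and $rN-m_0 H+N'$ nef. In general $N$ is only nef and may satisfy $N\cdot C=0$ for some curves $C$, on which $rN-m_0 H$ has strictly negative intersection regardless of $r$; the perturbation by the nef $N'$ cannot compensate. To handle this I would invoke the bigness of $B$: by Kodaira's lemma one writes $B\sim_{\R}\delta H+E_0$ for some $\delta>0$ and some effective $E_0$, and using the SNC hypothesis on $\{B\}$ one may arrange $\mathrm{supp}(E_0)$ to lie inside a fixed SNC divisor. A small ample contribution $\delta H$ can then be extracted from $B$ and combined with the asymptotic positivity of $rN$ to compensate for the curves with $N\cdot C=0$, ultimately producing the required nef Cartier divisor $E$.

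The main obstacle will be controlling the interaction between $\lceil B\rceil$ and the curves $C$ with $N\cdot C=0$. When $N^2=0$, the Hodge index theorem implies that these null curves span a negative-definite sublattice of the N\'eron--Severi group, so only finitely many distinct numerical classes arise; the SNC assumption on $\{B\}$ then permits a uniform bound on the intersection numbers $\lceil B\rceil\cdot C$ for such $C$. Quantifying these bounds will yield an explicit threshold $r(B,N)$, depending only on $B$ and $N$ and not on $N'$, above which the decomposition of $rN+N'-m_0 H$ as a nef Cartier divisor exists. Feeding this back into the Fujita vanishing set up at the start then completes the proof.
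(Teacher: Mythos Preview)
The paper does not prove this statement; Theorem~\ref{thm:wKV} is quoted verbatim from \cite[Theorem~2.6]{Tan12} and is used only as a black box in the proof of Theorem~\ref{thm:ample}. There is therefore no proof in the paper to compare your proposal against.

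That said, your outline has a genuine gap. After fixing $\mathcal{F}=\mathcal{O}_X(K_X+\lceil B\rceil)$ and a Fujita threshold $m_0$ for it, you need $rN+N'-m_0H$ to be a nef Cartier divisor. You correctly identify the obstruction: if $C$ is a curve with $N\cdot C=0$, then $(rN-m_0H)\cdot C=-m_0(H\cdot C)<0$ for every $r$, and choosing $N'$ with $N'\cdot C$ small (for instance $N'=0$ when $N$ is already integral) shows the decomposition simply does not exist. Your proposed fix, extracting an ample piece $\delta H$ from $B$ via Kodaira's lemma, does not close this gap as written: the divisor $\lceil B\rceil$ is already frozen inside $\mathcal{F}$, and the Fujita constant $m_0$ was chosen for that specific sheaf. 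If you instead modify the sheaf to shift part of $B$ to the nef side, the new sheaf requires its own Fujita threshold, and nothing in your sketch prevents that threshold from growing with the modification; you have not explained how to break this circularity. The remarks about Hodge index and the finiteness of $N$-null curves are relevant ingredients in Tanaka's argument, but they do not by themselves manufacture the uniform nef decomposition your Fujita setup demands.
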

\begin{thm}\label{thm:ample} \samepage In the situation of Notation $\ref{notation:s-st}$, let $\Delta$ be an effective $\Z_{(p)}$-divisor on $X$. Assume that $\lfloor\Delta_{\ol\eta}\rfloor=0$. Then the following conditions are equivalent: \begin{itemize} \item[(1)] $f$ is non-isotrivial or $(K_{X/Y}\cdot\Delta)>0$. \item[(2)] $f_*\O_X(m(K_{X/Y}+\Delta))$ is ample for each $m\ge 2$ such that $m\Delta$ is integral. \item[(3)] $f_*\O_X(m(K_{X/Y}+\Delta))$ is ample for some $m\ge 2$ such that $m\Delta$ is integral. \end{itemize} In particular, $f_*\o_{X/Y}^m$ is ample for each $m\ge2$ if and only if $f$ is non-isotrivial. \end{thm}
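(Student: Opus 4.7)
The implication $(2)\Rightarrow(3)$ is trivial. I establish $(3)\Rightarrow(1)$ by contraposition: suppose $f$ is isotrivial and $(K_{X/Y}\cdot\Delta)=0$, and fix any $m\ge 2$ with $m\Delta$ integral. Then $D:=m\Delta$ is an effective integral Cartier divisor on the smooth surface $X$ with $(K_{X/Y}\cdot D)=0$, so Example~\ref{eg:isotrivial} produces a finite cover $\varphi:Y'\to Y$ after which $\varphi^* f_*\O_X(m(K_{X/Y}+\Delta))=\varphi^* f_*\o_{X/Y}^m(D)$ is a trivial vector bundle. Since finite pullback preserves ampleness but a nonzero trivial bundle is not ample, $f_*\O_X(m(K_{X/Y}+\Delta))$ is not ample, contradicting (3).

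For the main implication $(1)\Rightarrow(2)$, fix $m\ge 2$ with $m\Delta$ integral and set $\G:=f_*\O_X(m(K_{X/Y}+\Delta))$. Corollary~\ref{cor:1 dim} already gives weak positivity, hence nefness, of $\G$ on the smooth curve $Y$. By Proposition~\ref{prop:t on curve}, ampleness will follow once we show $t(Y,\G,H)>0$ for some ample Cartier divisor $H$ on $Y$. The plan is to first establish bigness of $K_{X/Y}+\Delta$ under hypothesis (1), and then use the resulting sections to refine Step~3 of the proof of Theorem~\ref{thm:main thm} into a strict inequality. In the non-isotrivial case, Szpiro's Theorem~\ref{thm:Szp} yields bigness of $\o_{X/Y}$, and effectivity of $\Delta$ preserves bigness. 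In the isotrivial subcase, pass to a finite base change $\varphi:Y'\to Y$ trivialising $f$ so that $X_{Y'}\cong Z\times Y'$ with $f$ smooth; decompose $\Delta_{Y'}=\Delta^h+\Delta^v$ into horizontal (with coefficients in $[0,1)$, by $\lfloor\Delta_{\ol\eta}\rfloor=0$) and vertical (a non-negative $\Q$-combination of full smooth fibres) parts. An adjunction computation using the graph structure of horizontal components, together with the hypothesis $(K_{X/Y}\cdot\Delta)>0$, yields both nefness of $K_{X_{Y'}/Y'}+\Delta_{Y'}$ (each irreducible curve meets it with coefficient bounded below by $r(1-c)(2g-2)>0$) and $(K_{X_{Y'}/Y'}+\Delta_{Y'})^2>0$, hence bigness, which descends to $X$ via the finite surjective map $X_{Y'}\to X$. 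Bigness supplies an integer $N\ge 1$, an ample Cartier divisor $H_0$ on $Y$, and a nonzero section $s\in H^0(X,N(K_{X/Y}+\Delta)-f^*H_0)$.

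The main obstacle is threading $s$ into the proof of Theorem~\ref{thm:main thm} to upgrade $t\ge 0$ to $t>0$. For each $e>0$, Step~3 of that proof constructs a generically surjective trace morphism
\[
\alpha_e:{f^{(de)}}_*\O_{X^{de}}\bigl(((am-1)p^{de}+1)(K_{X^{de}/Y^{de}}+\Delta)\bigr)\longrightarrow{F_Y^{de}}^*\G.
\]
Composing $\alpha_e$ with the inclusion obtained from multiplication by a suitable power of $s$ (which contributes, via $N(K_{X/Y}+\Delta)\sim f^*H_0+(\textup{effective})$, an extra twist by $\O_{Y^{de}}(\tfrac{p^{de}}{N}H_0)$ on the source after pushforward) yields a generically surjective morphism which, read through Lemma~\ref{lem:t}, refines the Step~3 estimate to
\[
t\bigl(mp^{de}-a^{-1}(p^{de}-1)\bigr)+\tfrac{1}{N}p^{de}\deg H_0\le p^{de}\,t(m).
\]
Running the bootstrap of Steps~2 and~4 of Theorem~\ref{thm:main thm} on this improved inequality produces $t(m)\ge\deg H_0/N>0$, and ampleness of $\G$ follows from Proposition~\ref{prop:t on curve}. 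The principal technical difficulty is bookkeeping: keeping the twist by $f^*H_0$ coherent through the iterated relative-Frobenius trace composition, verifying that generic surjectivity is preserved after multiplication by $s$, and reconciling the integrality constraints on $a$, $m$, and $N$ by passing to a common multiple. The final ``in particular'' statement is the case $\Delta=0$.
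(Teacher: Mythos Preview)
Your implications $(2)\Rightarrow(3)$ and $(3)\Rightarrow(1)$ are fine and match the paper. The problem is in $(1)\Rightarrow(2)$, specifically in the step where you ``thread $s$ into the proof of Theorem~\ref{thm:main thm}''.

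You want to produce a generically surjective morphism whose existence would force
\[
t\bigl(mp^{de}-a^{-1}(p^{de}-1)\bigr)+\tfrac{1}{N}p^{de}\deg H_0\le p^{de}\,t(m),
\]
and you propose to obtain it by composing $\alpha_e$ with multiplication by a power of $s$. But multiplication by $s^k$ is \emph{not} generically surjective after pushforward. On the geometric generic fibre, $s|_{X_{\ol\eta}}$ is a nonzero section of the line bundle $\O_{X_{\ol\eta}}(N(K_{X_{\ol\eta}}+\Delta_{\ol\eta}))$, which has strictly positive degree since $g(X_{\ol\eta})\ge 2$; hence $s|_{X_{\ol\eta}}$ has zeros, and the induced map on $H^0$ of the generic fibre is injective with nonzero cokernel. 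Thus the composite $\G_{m'-kN/a}(kH_0)\xrightarrow{\cdot s^k}\G_{m'}\xrightarrow{\alpha_e}{F_Y^{de}}^*\G_m$ fails to be generically surjective, and Lemma~\ref{lem:t} does not apply. This is not bookkeeping; it is the point where the argument breaks. (No rearrangement helps: whichever side you twist, the section $s$ has fibrewise zeros and kills surjectivity on the generic stalk.)

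The paper takes a different route and avoids this issue entirely. Rather than trying to squeeze positivity out of a single section, it first proves a vanishing statement: for a suitable $\e_2\in[0,1)$ one has $H^1(X,m(K_{X/Y}+\e_2\Delta)-f^*H)=0$ for all $m\gg0$, using Tanaka's weak Kodaira vanishing (Theorem~\ref{thm:wKV}) in the non-isotrivial case and ampleness of $K_{X/Y}+\e\Delta$ plus Serre vanishing in the isotrivial case. This vanishing forces $f_*\O_X(m(K_{X/Y}+\e_2\Delta))(-H)$ to be \emph{globally} generated, giving $t>0$ directly for that sheaf. To pass from $\e_2\Delta$ back to $\Delta$, the paper introduces a second perturbation $\e_3=1/(p^b+1)$, uses Fedder's criterion and $\lfloor\Delta_{\ol\eta}\rfloor=0$ to keep $(X_{\ol\eta},(1+\e_3)\Delta_{\ol\eta})$ $F$-pure (hence $t\ge0$ for $K_{X/Y}+(1+\e_3)\Delta$ by Theorem~\ref{thm:main thm}), and combines the two via a Castelnuovo--Mumford regularity argument to obtain $t(l)>0$ for some divisible $l$. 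The bootstrap of Theorem~\ref{thm:main thm} then gives $t(m)>0$ for every admissible $m\ge2$. The essential inputs you are missing are Theorem~\ref{thm:wKV} and this $\e_2/\e_3$ perturbation scheme.
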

\begin{proof} $(2)\Rightarrow(3)$ is obvious. $(3)\Rightarrow(1)$ is follows from Example \ref{eg:isotrivial}, thus we have only to prove $(1)\Rightarrow(2)$. \\ \tb{Step1.} We show that if $f$ is isotrivial, then there exists an $\e_1\in(0,1)\cap\Q$ such that $K_{X/Y}+\e\Delta$ is ample for every $\e\in(0,\e_1)\cap\Q$. We may assume that $X\cong Z\times_kY$ for a smooth projective curve $Z$ of genus $\ge2$, and $f:X\to Y$ is the second projection. By the assumption of (1), there is an irreducible component $\Delta_i$ of $\Delta$ such that $g(\Delta_i)=Z$, where $g:X\to Z$ be the first projection. Thus there exists an $\e_1\in(0,1)\cap\Q$, $(K_{X/Y}+\e\Delta\cdot C)>0$ for every $\e\in(0,\e_1)\cap\Q$ and for every integral curve $C$ in $X$. This means that $K_{X/Y}+\e\Delta$ is an ample $\Q$-divisor by the Nakai-Moishezon criterion. \\
\tb{Step2.} Let $H$ be an ample divisor on $Y$. We show that there exists an $\e_2\in[0,\e_1)\cap\Q$ satisfying $H^1(X,m(K_{X/Y}+\e_2\Delta)-f^*H)=0$ for each divisible enough $m>0$. When $f$ is isotrivial, this follow from Step 1 and the Serre vanishing theorem. We assume that $f$ is non-isotrivial, and set $\e_2:=0$. Since $K_{X/Y}$ is nef and big by Theorem \ref{thm:Szp}, there exists an integer $n>0$ such that $nK_{X/Y}-f^*(K_Y+H)\sim B$ for some effective divisor $B$. Let $C$ be an integral curve in $X$ satisfying $(B\cdot C)<0$. Then we have $$(f^*(K_Y+H)\cdot C)>(B+f^*(K_Y+H)\cdot C)=(nK_{X/Y}\cdot C)\ge0,$$ so $C$ dominates $Y$, hence $(K_{X/Y}\cdot C)>0$ by Theorem \ref{thm:Szp}. Replacing $n$ by a larger one if necessary, we may assume that $B$ is also nef and big. Then for every $m\gg0$, we have $$H^1(X,mK_{X/Y}-f^*H)=H^1(X,K_X+B+(m-n-1)K_{X/Y})=0,$$ where the second equality follows from Theorem \ref{thm:wKV}. \\
\tb{Step3.} We show that $t(Y,f_*\O_X(m(K_{X/Y}+\e_2\Delta)),H)>0$ for each divisible enough $m>0$. There exists an exact sequence \begin{align*}0\to\O_X(m(K_{X/Y}+\e_2\Delta)-f^*H-X_y) \to &\O_X(m(K_{X/Y}+\e_2\Delta)-f^*H) \\\xrightarrow{\rho} &\O_{X_y}(m(K_{X/Y}+\e_2\Delta)-f^*H) \to 0 \end{align*} for every closed point $y\in Y$. By Step 2, since $H+y$ is ample, $H^0(X,\rho)$ is surjective for each divisible enough $m>0$. This means that $$f_*\O_X (m(K_{X/Y}+\e_2\Delta)-f^*H) \cong (f_*\O_X(m(K_{X/Y}+\e_2\Delta)))(-H)$$ is globally generated, and we have $t(Y,f_*\O_X(m(K_{X/Y}+\e_2\Delta)),H)\ge1>0$. \\
\tb{Step4.} Let $a$ be an integer not divisible by $p$ such that $a\Delta$ is integral. We show that $f_*\O_X(am(K_{X/Y}+\Delta))$ is ample for each $m\ge2/a$. 
Set $\e_3:=1/(p^b+1)$ for some integer $b\gg0$. Then, since $(X_{\ol\eta},(1+\e_3)\Delta_{\ol\eta})$ is $F$-pure by Fedder's criterion \cite{Fed83}, the proof of Theorem \ref{thm:main thm} shows that $$t(Y,f_*\O_X(m(K_{X/Y}+(1+\e_3)\Delta)),H)\ge0$$ for each divisible enough $m>0$. On the other hand, let $n_0\ge 2$ be an integer such that $$H^1(X_{\ol\eta},cn((1-\e_2-\e_3)K_{X_{\ol\eta}}+(1-\e_2-\e_3-2\e_2\e_3)\Delta_{\ol\eta}))=0$$ for some divisible enough integer $c>0$ and each $n\ge n_0$. This means that $$\O_{X_{\ol\eta}}(c(1-\e_2)n(K_{X_{\ol\eta}}+(1+\e_3)\Delta_{\ol\eta}))$$ is 0-regular with respect to $D_n:=c\e_3n(K_{X_{\ol\eta}}+\e_2\Delta_{\ol\eta}))$, where note that $D_n$ is a very ample divisor on $X_{\ol\eta}$. Hence the morphism \begin{align*}H^0(X_{\ol\eta},c(1-\e_2)n(K_{X_{\ol\eta}}+(1+\e_3)\Delta_{\ol\eta}))&\otimes H^0(X_{\ol\eta},D_n)\\ & \to H^0(X_{\ol\eta},c(1-\e_2+\e_3)n(K_{X_{\ol\eta}}+\Delta_{\ol\eta}))\end{align*} is surjective by the Castelnuovo-Mumford regularity \cite[Theorem~1.8.5]{Laz04}. 
From this, the morphism \begin{align*} f_*\O_X(c(1-\e_2)n(K_{X/Y}+(1+\e_3)\Delta)) &\otimes f_*\O_X(c\e_3n(K_{X/Y}+\e_2\Delta)) \\&\to f_*\O_X(c(1-\e_2+\e_3)n(K_{X/Y}+\Delta)) \end{align*} is generically surjective. By Lemma \ref{lem:t} and Step3, we have $$t(Y,f_*\O_X(c(1-\e_2+\e_3)n(K_{X/Y}+\Delta)),H)>0.$$ 
From now on we use notation of the proof of Theorem \ref{thm:main thm}. 
There exists an integer $l>0$ divisible enough such that $t(l):=t(Y,f_*\O_X(l(K_{X/Y}+\Delta)),H)>0$.
By an argument similar to Step2 in the proof of Theorem \ref{thm:main thm}, there exists a $h_0>0$ such that $t(l)+t(h)\le t(l+h)$ for every $h\ge h_0$. 
By Corollary \ref{cor:rs for F-pure curve}, we have $$S^0(X_{\ol\eta},\Delta_{\ol\eta},am(K_{X_{\ol\eta}}+\Delta_{\ol\eta}))=H^0(X_{\ol\eta},am(K_{X_{\ol\eta}}+\Delta_{\ol\eta}))$$ for each $m\ge 2/a$. 
Thus by an argument similar to Step3 and 4 in the proof of Theorem \ref{thm:main thm}, we have 
$$q_{m,e}t(l)+t(r_{m,e}+h_0)\le t(mp^e-a^{-1}(p^e-1))\le p^et(m),$$
for every $e>0$ with $a|(p^e-1)$.  Here, $q_{m,e}>0$ and $r_{m,e}$ respectively the quotient and the remainder of the devision of $mp^e-a^{-1}(p^e-1)-h_0$ by $l$.
Hence we obtain $t(Y,f_*\O_X(am(K_{X/Y}+\Delta)),H)>0$ for each $m\ge 2/a$, which implies $f_*\O_X(am(K_{X/Y}+\Delta))$ is ample by Proposition \ref{prop:t on curve}. This completes the proof.
\end{proof}
The following example shows that we can not drop the assumption $\lfloor \Delta_{\ol\eta}\rfloor=0$ in Theorem \ref{thm:ample}. 
\begin{eg}\label{eg:section} In the situation of Notation $\ref{notation:s-st}$, let $\Delta$ be an effective $\Q$-divisor on $X$, and let $C$ be a section of $f$. Taking blow-up, and replacing $\Delta$ and $C$ by their proper transforms, we may assume that $\Delta$ and $C$ are disjoint. Set $\Delta':=\Delta+C$. Then, for some integer $a>0$ such that $a\Delta$ is integral and for each $m\ge2/a$, there exists an exact sequence $$ 0\to \O_X(am(K_{X/Y}+\Delta')-C)\to \O_X(am(K_{X/Y}+\Delta'))\to\O_C\to0.$$ This induces a nonzero morphism $$f_*\O_X(am(K_{X/Y}+\Delta'))\to \O_Y,$$ since $$H^1(X_{\ol\eta},am(K_{X_{\ol\eta}}+\Delta'_{\ol\eta})-C_{\ol\eta})\cong H^0(X_{\ol\eta},(1-am)K_{X_{\ol\eta}}-am\Delta'_{\ol\eta}+C_{\ol\eta})=0.$$ Hence $f_*\O_X(am(K_{X/Y}+\Delta+C))$ is not ample for each $m\ge2$. \end{eg}
Next, in the situation of Notation $\ref{notation:s-st}$ we consider the positivity of $f_*\o_{X/Y}$. This relate to the $p$-ranks of fibers. Here the $p$-rank of a smooth projective curve $C$ is defined to be the integer $\gamma_C\ge0$ such that $p^{\gamma_C}$ is equal to the number of $p$-torsion points of the Jacobian variety of $C$. It is known that $\gamma_C=\dim_{k}S^0(C,\o_C)$. Jang proved that if the geometric generic fiber $X_{\ol\eta}$ is ordinary, (i.e., the $p$-rank of $X_{\ol\eta}$ is equal to $\dim_{k}H^0(X_{\ol\eta},\o_{X_{\ol\eta}})$), then $f_*\o_{X/Y}$ is a nef vector bundle \cite{Jan08}. Raynaud implied that if every closed fiber is a smooth ordinary curve, then $f$ is isotrivial \cite[TH\' EOR\` EME~5]{Szp81}. On the other hand, Moret-Bailly constructed an example of semi-stable fibration such that $f_*\o_{X/Y}$ is not nef \cite{MB81}. In this case, $X_{\ol\eta}$ is a smooth curve of genus two and of $p$-rank zero. As a generalization, Jang showed that if $f$ is non-isotrivial and the $p$-rank of $X_{\ol \eta}$ is zero, then $f_*\o_{X/Y}$ is not nef \cite{Jan10}. We generalize these results to the case of intermediate $p$-rank (Theorem \ref{thm:criterion}), based on the method in \cite{Jan10}. %
\begin{thm}[\textup{\cite[Corollary~2.5]{Jan10}}] \label{thm:fund ex seq} \samepage In the situation of Notation $\ref{notation:s-st}$, let $\mc M$ and $\mc T$ be the free part and the torsion part of $R^1{f_{Y^1}}_*\B^1_{X/Y}$ respectively, where $\B_{X/Y}^1$ is the kernel of $\phi^{(1)}_{X/Y}\otimes\o_{X_{Y^1}/{Y^1}}$ $($$\phi^{(1)}_{X/Y}$ is defined in Section $\ref{section:prelim}$$)$. Then there exists an exact sequence of $\O_{Y^1}$-modules $$ 0 \to \mc M^* \to {f^{(1)}}_*\o_{X^1/Y^1} \xrightarrow{{f_{Y^1}}_*(\phi^{(1)}_{X/Y}\otimes\o_{X_{Y^1}/{Y^1}})} F_Y^*f_*\o_{X/Y} \to \mc M\oplus \T \to 0. $$ \end{thm}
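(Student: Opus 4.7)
The plan is to push forward a natural short exact sequence arising from the relative Frobenius trace, then extract the claimed four-term sequence and identify its outer terms via duality on the smooth base $Y^1$. First I would establish the short exact sequence
\[
0 \to \B^1_{X/Y} \to F^{(1)}_{X/Y*}\o_{X^1/Y^1} \to \o_{X_{Y^1}/Y^1} \to 0,
\]
whose right-hand map is $\phi^{(1)}_{X/Y}\otimes\o_{X_{Y^1}/Y^1}$ (the source is rewritten via the projection formula together with $\O_{X^1}((1-p)K_{X^1/Y^1}) \otimes F^{(1)*}_{X/Y}\o_{X_{Y^1}/Y^1} \cong \o_{X^1/Y^1}$). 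Surjectivity is checked fiberwise: the singularities of a semi-stable fiber are only nodes, which are $F$-pure by \cite{GW77}, so the fiberwise Frobenius trace is surjective.

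Next I would apply $Rf_{Y^1*}$. Since $Y$ is smooth, $F_Y$ is flat, so flat base change gives $f_{Y^1*}\o_{X_{Y^1}/Y^1} \cong F_Y^* f_*\o_{X/Y}$. Relative Serre duality combined with $f_*\O_X \cong \O_Y$ yields $R^1 f_{Y^1*}\o_{X_{Y^1}/Y^1} \cong R^1 f^{(1)}_*\o_{X^1/Y^1} \cong \O_{Y^1}$, and the map between these two copies of $\O_{Y^1}$ at the tail of the six-term long exact sequence is Serre-dual to the map $f_{Y^1*}\O_{X_{Y^1}} \to f^{(1)}_*\O_{X^1}$ induced by the relative Frobenius, which becomes the identity after identifying both sides with $\O_{Y^1}$ via $f_*\O_X \cong \O_Y$. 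Therefore the preceding differential $R^1 f_{Y^1*}\B^1_{X/Y} \to \O_{Y^1}$ vanishes, and I extract the four-term exact sequence
\[
0 \to f_{Y^1*}\B^1_{X/Y} \to f^{(1)}_*\o_{X^1/Y^1} \to F_Y^* f_*\o_{X/Y} \to R^1 f_{Y^1*}\B^1_{X/Y} \to 0.
\]

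On the smooth curve $Y^1$ the sheaf $R^1 f_{Y^1*}\B^1_{X/Y}$ splits canonically as $\mc M \oplus \mc T$. The remaining task, and the main obstacle, is to identify $f_{Y^1*}\B^1_{X/Y}$ with $\mc M^*$. My plan is to dualize the four-term sequence on $Y^1$ via $\mc{H}om(-,\O_{Y^1})$: the sheaves $f^{(1)}_*\o_{X^1/Y^1}$ and $F_Y^* f_*\o_{X/Y}$ are locally free of common rank (by semi-stable reduction), and $f_{Y^1*}\B^1_{X/Y}$ is a torsion-free subsheaf of a locally free sheaf on a smooth curve and hence locally free. Combined with $\mc{E}xt^1(\mc T,\O_{Y^1}) \cong \mc T$, this produces a parallel four-term exact sequence whose leftmost term is $\mc M^*$. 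Comparing it with the four-term sequence obtained by pushing forward the Grothendieck-dualized short exact sequence $0 \to \O_{X_{Y^1}} \to F^{(1)}_{X/Y*}\O_{X^1} \to \mc{H}om(\B^1_{X/Y},\o_{X_{Y^1}/Y^1}) \to 0$ (whose middle map is Serre-dual to our original one) identifies $f_{Y^1*}\B^1_{X/Y}$ with $\mc M^*$. The delicate point of this comparison is the Grothendieck duality calculation for $F^{(1)}_{X/Y}$, which relies on its finite flatness at the $F$-pure nodes.
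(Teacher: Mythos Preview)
The paper does not prove this statement; it is quoted from \cite[Corollary~2.5]{Jan10}. Your overall strategy---push forward the short exact sequence defining $\B^1_{X/Y}$, truncate the long exact sequence using the trace isomorphism at the tail, and then identify the kernel by duality---is the natural one and is essentially Jang's argument.

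There is, however, a genuine gap in your last step. Your comparison between the $\O_{Y^1}$-dual of the four-term sequence and the pushforward of the Grothendieck-dualized short exact sequence on $X_{Y^1}$ identifies $\mc M^*$ with $f_{Y^1*}\mc{H}om(\B^1_{X/Y},\o_{X_{Y^1}/Y^1})$, \emph{not} with $f_{Y^1*}\B^1_{X/Y}$: both four-term sequences share the middle map $(F_Y^*f_*\o_{X/Y})^* \to (f^{(1)}_*\o_{X^1/Y^1})^*$, so matching kernels gives $f_{Y^1*}C \cong \mc M^*$ for $C := \mc{H}om(\B^1_{X/Y}, \o_{X_{Y^1}/Y^1})$. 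The missing ingredient is the self-duality $\B^1_{X/Y} \cong C$. This holds on the locus of smooth fibers via the Cartier operator (on a smooth curve $Z$ one has $F_{Z*}\O_Z/\O_Z \cong \ker(F_{Z*}\o_Z \to \o_Z)$, hence $\mc{H}om(B^1_Z,\o_Z)\cong B^1_Z$), and it extends across the $A_{p-1}$ singularities of $X_{Y^1}$ because both $\B^1_{X/Y}$ and $C$ are maximal Cohen--Macaulay, hence reflexive, on the normal Gorenstein surface $X_{Y^1}$. With this self-duality in hand, $f_{Y^1*}\B^1_{X/Y}\cong f_{Y^1*}C\cong\mc M^*$ and your argument closes.

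One side remark is incorrect: the relative Frobenius $F^{(1)}_{X/Y}$ is \emph{not} flat at the nodes. By Kunz's criterion fiberwise, or by direct computation (over a node $xy=t$ the map $k[x^p,y^p,xy]\to k[x,y]$ has generic rank $p$ but length $2p-1$ at the origin), flatness fails precisely there. Fortunately this does not obstruct the Grothendieck duality you need, which for a finite morphism requires only that source and target be Gorenstein.
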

\begin{thm}[\textup{\cite[Proposition 2]{Szp79}}] \label{thm:Szp criterion}\samepage In the situation of Notation $\ref{notation:s-st}$, $f$ is isotrivial if and only if $\deg f_*\o_{X/Y}=0$. \end{thm}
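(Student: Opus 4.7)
I would prove the two implications separately, treating the easy direction first.

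For the forward direction, assume $f$ is isotrivial. Then Example~\ref{eg:isotrivial} (applied with $D=0$) produces a finite morphism $\vp : Y' \to Y$ such that
$$\vp^* f_*\o_{X/Y} \cong h_* \o_{Z\times_k Y'/Y'} \cong H^0(Z,\o_Z) \otimes_k \O_{Y'},$$
which is a trivial vector bundle of rank $g$. In particular $\deg \vp^* f_*\o_{X/Y} = 0$, and since $\deg \vp^* f_*\o_{X/Y} = \deg(\vp)\cdot \deg f_*\o_{X/Y}$ with $\deg(\vp) > 0$, this forces $\deg f_*\o_{X/Y} = 0$.

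For the reverse direction I would argue by contrapositive. Assume $f$ is non-isotrivial. By Theorem~\ref{thm:Szp}, $\o_{X/Y}$ is nef and big, so $(K_{X/Y})^2 > 0$ (a nef big divisor on a surface has positive self-intersection). The goal is then to apply the relative Noether formula for semi-stable fibrations,
$$12 \deg f_*\o_{X/Y} = (K_{X/Y})^2 + \delta,$$
where $\delta \ge 0$ denotes the total number of nodes occurring in the singular fibers of $f$. Since the right-hand side is the sum of a strictly positive and a non-negative term, one concludes $\deg f_*\o_{X/Y} > 0$, hence in particular $\neq 0$.

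To establish the displayed identity I would combine four standard ingredients: (a) Noether's formula $12\chi(\O_X) = K_X^2 + \chi_\ell(X)$ on the smooth surface $X$, valid in arbitrary characteristic with $\chi_\ell$ the $\ell$-adic Euler characteristic for some $\ell \neq p$; (b) the Leray-and-relative-duality identity $\chi(\O_X) = \deg f_*\o_{X/Y} + (g-1)(g(Y)-1)$ obtained from $R^1 f_*\O_X \cong (f_*\o_{X/Y})^\vee$; (c) the formula $K_X^2 = (K_{X/Y})^2 + 8(g-1)(g(Y)-1)$ coming from $K_X = K_{X/Y} + f^*K_Y$, $K_{X/Y}\cdot f^*K_Y = (2g-2)(2g(Y)-2)$, and $(f^*K_Y)^2 = 0$; and (d) the Euler-characteristic computation $\chi_\ell(X) = 4(g-1)(g(Y)-1) + \delta$, which records that each node in a singular fiber contributes exactly $+1$ to $\chi_\ell$ relative to a smooth fiber. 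A routine substitution yields the claimed identity.

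The main technical obstacle is item (d): one must verify in positive characteristic that passing from a smooth fiber to a nodal fiber with $n$ nodes changes the $\ell$-adic Euler characteristic by exactly $+n$. This is standard via the normalization short exact sequence applied to each singular fiber together with proper base change, but some care is required because the characteristic-zero topological argument is not directly available.
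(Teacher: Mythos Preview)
The paper does not prove this statement; it is quoted as \cite[Proposition~2]{Szp79} and used as a black box in the proof of Theorem~\ref{thm:criterion}. There is therefore no proof in the paper to compare against.

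Your outline is a correct and standard route to the result. The forward direction via Example~\ref{eg:isotrivial} is immediate, and for the converse the relative Noether identity $12\deg f_*\o_{X/Y}=(K_{X/Y})^2+\delta$ combined with $(K_{X/Y})^2>0$ from Theorem~\ref{thm:Szp} gives the strict positivity. Your derivation from (a)--(d) checks out numerically, and your caution about (d) in positive characteristic is well placed: the contribution of each node is handled either by the Grothendieck--Ogg--Shafarevich formula or by a direct normalization argument on each singular fiber plus proper base change.

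One remark on logical dependence: you are using Theorem~\ref{thm:Szp} to deduce $(K_{X/Y})^2>0$ and then the present theorem. Within this paper that is unproblematic, since both statements are imported from \cite{Szp79}. If you were reconstructing Szpiro's original arguments you would want to verify that his Th\'eor\`eme~1 does not itself rest on his Proposition~2; in fact his development establishes nefness and the Noether identity first and then analyzes the equality case $(K_{X/Y})^2=0$, so the two results are derived in parallel rather than one from the other.
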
 
\begin{thm}[\textup{\cite[1.4. Satz]{LS77}}]\label{thm:fact on vb on sm curve}\samepage Let $\E$ be a vector bundle on a smooth projective curve $C$. If ${F_C^e}^*\E\cong\E$ for some $e>0$, then there exists an \'etale morphism $\pi:C'\to C$ from a smooth projective curve $C'$ such that $\pi^*\E\cong\bigoplus\O_{C'}$.  \end{thm}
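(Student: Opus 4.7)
The plan is to use the classical equivalence between Frobenius-periodic vector bundles on $C$ and étale local systems of $\mb{F}_{p^e}$-vector spaces, and then to exploit the finiteness of $GL_n(\mb{F}_{p^e})$ to obtain the trivializing étale cover.

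Fix an isomorphism $\varphi: (F_C^e)^*\E \xrightarrow{\sim} \E$ and let $\E^{\varphi}$ denote the étale sheaf on $C$ whose local sections over an étale open $U \to C$ are those $s \in \E(U)$ satisfying $\varphi(F_C^{e*} s) = s$. Locally on an affine open $\Spec A \subseteq C$ trivializing $\E$, the map $\varphi$ is given by a matrix $M \in GL_n(A)$, and the fixed-point equation becomes the polynomial system $M \cdot (s_1^{p^e}, \ldots, s_n^{p^e})^T = (s_1, \ldots, s_n)^T$. The Jacobian of this system with respect to $(s_1, \ldots, s_n)$ is $-\mathrm{Id}_n$, which is invertible in $A$, so the associated $A$-algebra is finite étale of degree $p^{en}$ over $A$. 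Consequently $\E^{\varphi}$ is an étale local system of $\mb{F}_{p^e}$-vector spaces of rank $n$ on $C$.

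Next, such a local system corresponds to a continuous representation $\rho: \pi_1^{\mathrm{et}}(C, \bar{x}) \to GL_n(\mb{F}_{p^e})$ of the étale fundamental group. Since $GL_n(\mb{F}_{p^e})$ is a finite group, $\ker(\rho)$ is an open normal subgroup of finite index, corresponding to a connected finite étale Galois cover $\pi: C' \to C$. The source $C'$ is automatically a smooth projective curve (any étale cover of a smooth projective curve is such), and by construction $\pi^*\E^{\varphi} \cong \underline{\mb{F}_{p^e}}^{\oplus n}$, the constant sheaf.

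Finally, the inverse to the "fixed points" equivalence is $\mc L \mapsto \mc L \otimes_{\mb{F}_{p^e}} \O_{C'}$ (a form of Artin-Schreier/Frobenius descent), which recovers $\pi^*\E$ from $\pi^*\E^{\varphi}$. Applied to the trivialized local system on $C'$, this yields $\pi^*\E \cong \bigoplus_{i=1}^n \O_{C'}$, as desired. The main technical obstacle is the verification of the equivalence of categories between Frobenius-periodic bundles and $\mb{F}_{p^e}$-local systems; this rests on the étaleness computation above together with faithfully flat descent along an étale cover that splits $\E^{\varphi}$. Once this equivalence is in place, the passage to a trivializing cover is immediate from the finiteness of $GL_n(\mb{F}_{p^e})$.
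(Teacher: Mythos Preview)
The paper does not supply its own proof of this theorem; it is quoted from \cite{LS77} and used as a black box. Your argument is essentially the Lange--Stuhler proof itself, and it is correct: the sheaf of $\varphi$-fixed sections is \'etale-locally constant of rank $n$ over $\mb{F}_{p^e}$ (your Jacobian computation is the key point, and it is fine since $\partial s_i^{p^e}/\partial s_j=0$ in characteristic $p$), the associated monodromy lands in the finite group $GL_n(\mb{F}_{p^e})$, and the kernel cover trivializes the local system and hence the bundle via the inverse functor $\mc L\mapsto \mc L\otimes_{\mb{F}_{p^e}}\O$. One small point worth tightening: the assertion that $\E^{\varphi}\otimes_{\mb{F}_{p^e}}\O_C\to\E$ is an isomorphism (i.e.\ that $\E$ \'etale-locally admits a $\varphi$-fixed basis) is exactly Lang's theorem for $GL_n$ applied fiberwise, and you might name it explicitly rather than folding it into ``faithfully flat descent''.
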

\begin{thm}\label{thm:criterion} In the situation of Notation $\ref{notation:s-st}$, the function $$s(y):=\dim_{k(y)}S^0(X_y,\o_{X_y})$$ on $Y$ is lower semicontinuous. Furthermore, $f$ is isotrivial if and only if $s(y)$ is constant on $Y$ and $f_*\o_{X/Y}$ is nef.  \end{thm}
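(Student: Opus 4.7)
The proof splits into two parts: lower semicontinuity of $s$, and the characterization of isotriviality.

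For lower semicontinuity I apply Observation~\ref{obs:lower semi conti}(IV) with $\Delta=0$ and $R=K_{X/Y}$. The morphism $f$ is flat and its fibers are nodal (hence Gorenstein) curves, so $f$ is a Gorenstein morphism, and the numerical condition $(p^d-1)(K_{X/Y}-R)=0$ is trivially the pullback of the zero divisor. Because arithmetic genus is constant in a flat family of curves, $h^0(X_y,\omega_{X_y})=g$ on all of $Y$, so the open subset $Y_1$ of the observation coincides with $Y$, and the lower semicontinuity of $s$ follows on all of $Y$.

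For the ``only if'' direction of the second statement, assume $f$ is isotrivial. By~\cite{Szp79} there is a finite \'etale cover $\pi\colon Y'\to Y$ and an isomorphism $X\times_Y Y'\cong Z\times_k Y'$ over $Y'$ for some closed fiber $Z$. Flat base change and the projection formula give $\pi^*f_*\omega_{X/Y}\cong H^0(Z,\omega_Z)\otimes_k\mathcal{O}_{Y'}$, which is trivial, so $f_*\omega_{X/Y}$ is numerically trivial and in particular nef. Every closed fiber is isomorphic to $Z$, and $S^0$ commutes with flat field extensions, so $s(y)=\dim_k S^0(Z,\omega_Z)$ for every $y\in Y$ (the generic case being handled by base change to $Y'$), showing that $s$ is constant.

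For the converse, assume $s\equiv s_0$ and $f_*\omega_{X/Y}$ is nef. By Theorem~\ref{thm:Szp criterion} it suffices to prove $\deg f_*\omega_{X/Y}=0$. Observation~\ref{obs:lower semi conti}(IV) gives $\dim_{k(\bar y)}(\coker\theta^{(de)})\otimes k(\bar y)=g-s_0$ for all $y\in Y$ and all $e\ge g$; since $Y$ is reduced, $\coker\theta^{(de)}$ is locally free of rank $g-s_0$ and $\ker\theta^{(de)}$ is a rank $g-s_0$ subbundle of $f_*\omega_{X/Y}$ for each such $e$. Let $\mathcal{S}\subseteq f_*\omega_{X/Y}$ be the rank $s_0$ subbundle with fiber $S^0(X_y,\omega_{X_y})$ at each $y$, and set $\mathcal{Q}:=f_*\omega_{X/Y}/\mathcal{S}$. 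The iterated Frobenius trace is a bijection of $S^0(X_y,\omega_{X_y})$ onto itself, so $(\ker\theta^{(de)})|_y\cap\mathcal{S}|_y=0$, and the composite $\ker\theta^{(de)}\hookrightarrow f_*\omega_{X/Y}\twoheadrightarrow\mathcal{Q}$ is a fiberwise isomorphism of locally free sheaves of equal rank, hence an isomorphism. The generalization of Theorem~\ref{thm:fund ex seq} to $\theta^{(de)}$, obtained by pushing forward the short exact sequence defining the kernel of $\phi^{(de)}_{X/Y}\otimes\omega_{X_{Y^{de}}/Y^{de}}$ and applying relative Serre duality together with $R^1 f_*\omega_{X/Y}\cong\mathcal{O}_Y$, yields a four-term exact sequence
\begin{equation*}
0\to\mathcal{M}_e^{\vee}\to f_*\omega_{X/Y}\xrightarrow{\theta^{(de)}}(F_Y^{de})^*f_*\omega_{X/Y}\to\mathcal{M}_e\to 0,
\end{equation*}
where $\mathcal{M}_e:=\coker\theta^{(de)}$ is locally free of rank $g-s_0$ and $\ker\theta^{(de)}\cong\mathcal{M}_e^{\vee}$. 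Taking degrees yields $(p^{de}-1)\deg f_*\omega_{X/Y}=2\deg\mathcal{M}_e=-2\deg(\ker\theta^{(de)})=-2\deg\mathcal{Q}$. Since $\mathcal{Q}$ is a locally free quotient of the nef bundle $f_*\omega_{X/Y}$ on a smooth projective curve, $\deg\mathcal{Q}\ge 0$, so $\deg f_*\omega_{X/Y}\le 0$; combined with nefness this forces $\deg f_*\omega_{X/Y}=0$, and Theorem~\ref{thm:Szp criterion} yields isotriviality.

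The main obstacle is the generalization of Theorem~\ref{thm:fund ex seq} to $\theta^{(de)}$: for $e=1$ this is Jang's theorem, and for general $e$ the same Grothendieck-duality computation should apply over $Y^{de}$, but one must carefully track the Frobenius twists of the dualizing sheaves to confirm that $\ker\theta^{(de)}$ is really the dual of the free part of $\coker\theta^{(de)}$.
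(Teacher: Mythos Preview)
Your proof has a genuine gap at the step you yourself flag as the main obstacle: the generalization of Jang's duality (Theorem~\ref{thm:fund ex seq}) to $\theta^{(de)}$ for $e>1$ is \emph{false}. If it were true, your argument would actually prove more than the theorem claims. Once the cokernels are locally free of constant rank $g-s_0$, the kernels $\ker\theta^{(de)}$ form an increasing chain of rank-$(g-s_0)$ subbundles of $f_*\omega_{X/Y}$, hence stabilize to a fixed subbundle $\mathcal K$ for $e\gg0$. Your degree identity would then read $(p^{de}-1)\deg f_*\omega_{X/Y}=-2\deg\mathcal K$ for infinitely many $e$, forcing $\deg f_*\omega_{X/Y}=0$ \emph{without} using nefness at all. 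But this contradicts Moret-Bailly's example: there $s\equiv0$ (the generic $p$-rank is zero, and lower semicontinuity pins every fibre to zero), the fibration is non-isotrivial, and $\deg f_*\omega_{X/Y}>0$ by Theorem~\ref{thm:Szp criterion}. Concretely, in that example $\theta^{(de)}=0$ for $e\gg0$, so your four-term sequence would assert $f_*\omega_{X/Y}\cong\big((F_Y^{de})^*f_*\omega_{X/Y}\big)^{\vee}$, which is impossible on $\mathbb P^1$ unless the bundle has degree zero. The self-duality underlying Jang's theorem is a feature of $\mathcal B^1_{X/Y}$ specific to $e=1$, coming from the Cartier isomorphism, and does not persist for higher $e$.

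A secondary gap is the unproved existence of the subbundle $\mathcal S\subset f_*\omega_{X/Y}$ with fibre $S^0(X_y,\omega_{X_y})$: the stable image $\im\theta^{(de)}$ lives in $(F_Y^{de})^*f_*\omega_{X/Y}$, not in $f_*\omega_{X/Y}$, and Frobenius descent of a subbundle is not automatic. The paper's proof avoids both problems. It uses Jang's theorem only at $e=1$ to get that $(\ker\theta^{(1)})^{*}$ is nef, then shows directly from the factorization $\theta^{(e+1)}=(F_Y^{e})^*\theta^{(1)}\circ\theta^{(e)}$ that for $e\gg0$ one has $F_Y^*\im\theta^{(e)}\cong\im\theta^{(e)}$; Lange--Stuhler (Theorem~\ref{thm:fact on vb on sm curve}) then gives $\deg\im\theta^{(e)}=0$. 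A short monotonicity argument in $e$ using nefness squeezes $\deg f_*\omega_{X/Y}$ down to $0$, and Theorem~\ref{thm:Szp criterion} finishes.
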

\begin{proof} 
By Observation \ref{obs:lower semi conti} (I\hspace{-1pt}V) we get the first statement (set $\Delta=0$ and $R=K_{X/Y}$).
We show that the second statement. If $f$ is isotrivial, then obviously $s(y)$ is constant, and by Example \ref{eg:isotrivial}, $f_*\o_{X/Y}$ is nef. Conversely, we assume that $s(y)$ is constant on $Y$. This means that $\coker(\theta^{(e)})$ is locally free for each $e\gg0$, where $$\theta^{(e)}:={f_{Y^e}}_*(\phi^{(e)}_{X/Y}\otimes\o_{X_{Y^{e}}/{Y^{e}}}):f^{(e)}_*\o_{X^e/Y^e}\to {f_{Y^e}}_*\o_{X_{Y^e}/{Y^e}}\cong {F_Y^e}^*f_*\o_{X/Y}.$$ Then there exists a commutative diagram $$\xymatrix{ f_*\o_{X/Y} \ar[r]^{\theta^{(e)}} \ar@{>>}[dr] \ar@/^30pt/[rr]^{\theta^{(e+1)}} & {F_Y^e}^*f_*\o_{X/Y} \ar[r]^{{F_Y^e}^*\theta^{(1)}} & {F_Y^{e+1}}^*f_*\o_{X/Y} \\ & \im(\theta^{(e)}) \ar@{^{(}-_{>}}[u] \ar@{>>}[dr]_{\alpha^{(e)}} & F_Y^*\im(\theta^{(e)}) \ar@{^{(}-_{>}}[u]\\ & & \im(\theta^{(e+1)}) \ar@{^{(}-_{>}}[u]_{\beta^{(e)}} } $$ for each $e>0$. When $e\gg0$, since $\alpha^{(e)}$ is a surjective morphism between vector bundles of the same rank, it is an isomorphism. Furthermore, since $\beta^{(e)}$ is an inclusion between subbundles of ${F_Y^{e+1}}^*f_*\o_{X/Y}$ of the same rank, it is an isomorphism. Thus we have $F_Y^*\im(\theta^{(e)})\cong\im(\theta^{(e)})$, in particular, by Theorem \ref{thm:fact on vb on sm curve}, $\deg(\im(\theta^{(e)}))=0$. Suppose that $f_*\o_{X/Y}$ is nef. Then, Theorem \ref{thm:fund ex seq} shows that $(\ker(\theta^{(1)}))^*$ is nef, hence that $(\ker(\alpha^{(e)}))^*$ is nef. 
Thus for every $e>0$, the exact sequence $$0\to \ker(\alpha^{(e)})\to \im(\theta^{(e)})\to \im(\theta^{(e+1)})\to 0$$ induces that 
$$\deg(\im(\theta^{(e)}))=\deg(\im(\theta^{(e+1)}))+\deg(\ker(\alpha^{(e)}))\le\deg(\im(\theta^{(e+1)})).$$ 
From this we have $\deg(\im(\theta^{(1)}))\le\deg(\im(\theta^{(2)}))\le\cdots\le0$, and hence 
$$0\le\deg f_*\o_{X/Y}=\deg(\ker(\theta^{(1)}))+\deg(\im(\theta^{(1)}))\le0.$$ 
Consequently, Theorem \ref{thm:Szp criterion} shows that $f$ is isotrivial, which completes the proof. \end{proof}
\section{Iitaka's conjecture}\label{section:iitaka}
In this section, we consider Iitaka's conjecture under the following hypotheses: 
\begin{notation}\label{notation:good fibrat}\samepage Let $f:X\to Y$ be a fibration between smooth projective varieties, let $\Delta$ be an effective $\Q$-divisor on $X$ such that $a\Delta$ is integral for some integer $a>0$ not divisible by $p$, and let $\ol\eta$ be the geometric generic point of $Y$. Assume that \begin{itemize} \item[(i)] $K_{X_{\ol\eta}}+\Delta_{\ol\eta}$ is finitely generated in the sense of Definition \ref{defn:fg}, and \item[(ii)] there exists an integer $m_0>0$ such that for every integer $m\ge m_0$ $$S^0(X_{\ol\eta},\Delta_{\ol \eta},m(aK_{X_{\ol \eta}}+(a\Delta)_{\ol \eta}))=H^0(X_{\ol \eta},m(aK_{X_{\ol \eta}}+(a\Delta)_{\ol \eta})).$$ \end{itemize} \end{notation}
Here condition (i) and (ii) are the same as in Theorem \ref{thm:main thm}. We first prove the case where $Y$ is of general type based on the method in \cite{Pat13}.
\begin{thm}\label{thm:iitaka conj Y gen}\samepage In the situation of Notation $\ref{notation:good fibrat}$, assume that $Y$ is of general type. Then $$ \kappa(X,K_X+\Delta)\ge\kappa(Y,K_Y)+\kappa(X_{\ol\eta},K_{X_{\ol\eta}}+\Delta_{\ol\eta}).$$ \end{thm}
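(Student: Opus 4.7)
\textbf{Plan for Theorem \ref{thm:iitaka conj Y gen}.}

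The idea is to combine weak positivity of $\F_m:=f_*\O_X(am(K_{X/Y}+\Delta))$ from Theorem \ref{thm:main thm} with bigness of $K_Y$ (which holds because $Y$ is of general type, so $\kappa(Y)=\dim Y$). Using $K_X=K_{X/Y}+f^*K_Y$, one has
$$f_*\O_X(Nam(K_X+\Delta))\cong \F_m^{(N)}\otimes\o_Y^{Nam},\qquad\text{where }\F_m^{(N)}:=f_*\O_X(Nam(K_{X/Y}+\Delta)),$$
and the task reduces to showing that $h^0(Y,\F_m^{(N)}\otimes\o_Y^{Nam})$ grows at least like $N^{\dim Y+\kappa_F}$ as $N\to\infty$, where $\kappa_F:=\kappa(X_{\ol\eta},K_{X_{\ol\eta}}+\Delta_{\ol\eta})$.

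First, I would exploit the finite generation assumption (i) together with Lemma \ref{lem:fg}(1) to fix $m\ge m_0$ divisible enough that, for every $N\ge 1$, the multiplication map
$$\mathrm{Sym}^N H^0(X_{\ol\eta},am(K_{X_{\ol\eta}}+\Delta_{\ol\eta}))\twoheadrightarrow H^0(X_{\ol\eta},Nam(K_{X_{\ol\eta}}+\Delta_{\ol\eta}))$$
is surjective. This gives a generically surjective $\O_Y$-linear morphism $\mu_N\colon S^N\F_m\to\F_m^{(N)}$. Its image $\mc Q_N$ is a torsion-free quotient of the weakly positive sheaf $S^N\F_m$, hence itself weakly positive; it has generic rank $r_N=h^0(X_{\ol\eta},Nam(K_{X_{\ol\eta}}+\Delta_{\ol\eta}))$, which by definition of Iitaka--Kodaira dimension grows like $C\,N^{\kappa_F}$; and the cokernel of $\mc Q_N\hookrightarrow\F_m^{(N)}$ is torsion, so tensoring with the line bundle $\o_Y^{Nam}$ preserves the inequality on $H^0$:
$$h^0(X,Nam(K_X+\Delta))=h^0(Y,\F_m^{(N)}\otimes\o_Y^{Nam})\ge h^0(Y,\mc Q_N\otimes\o_Y^{Nam}).$$

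Second, I would convert weak positivity of $\mc Q_N$ together with bigness of $\o_Y^{am}$ into the quantitative lower bound
$$h^0(Y,\mc Q_N\otimes\o_Y^{Nam})\;\ge\;c\cdot r_N\cdot N^{\dim Y}\;\sim\;c'\,N^{\dim Y+\kappa_F}.$$
The heuristic is Viehweg's principle that ``weakly positive $\otimes$ big'' behaves like ``nef $\otimes$ big'', so asymptotic Riemann--Roch predicts sections of order $N^{\dim Y}$ times the rank. Making this rigorous combines (a) the covering trick producing a finite cover $\pi\colon Y'\to Y$ on which $\pi^*\F_m$ becomes nef (this uses only the definition of weak positivity), (b) the ramification formula to keep $K_{Y'}$ big, and (c) standard asymptotic Riemann--Roch on $Y'$ applied to the nef sheaf $\pi^*\mc Q_N$ tensored with the $N$-th power of the big line bundle $\pi^*\o_Y^{am}$; the inclusion $\O_Y\hookrightarrow\pi_*\O_{Y'}$ then transfers sections back to $Y$.

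Combining the two steps yields $\kappa(X,K_X+\Delta)\ge\dim Y+\kappa_F=\kappa(Y)+\kappa(X_{\ol\eta},K_{X_{\ol\eta}}+\Delta_{\ol\eta})$, as required. The main obstacle is step (a): in positive characteristic, separable covers need not convert weak positivity into nefness, and inseparable covers interact poorly with the Frobenius trace maps used on the $F$-singular side. The workaround, following \cite{Pat13}, is to work with the definition of weak positivity throughout instead of passing to nefness. Concretely, one combines the generic global generation of $(S^{\alpha\beta}\F_m)^{**}\otimes\O_Y(\beta H)$ (for any ample $H$, with $\beta=\beta(\alpha)$) with Kodaira's lemma $qK_Y\sim H+E$ (here $q>0$, $H$ ample, $E\ge 0$ effective, using that $K_Y$ is big), and diagonalizes the choice of $\alpha$ and $\beta$ to extract the sharp growth exponent $\dim Y+\kappa_F$ in $N$.
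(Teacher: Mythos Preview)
Your strategy is genuinely different from the paper's. You aim for a direct asymptotic section count $h^0(X,Nam(K_X+\Delta))\gtrsim N^{\dim Y+\kappa_F}$ via the image sheaves $\mc Q_N\subseteq\F_m^{(N)}$ and weak positivity of $\F_m$. The paper instead runs a threshold argument following \cite{Pat13}: with $H$ ample on $Y$ it sets
\[
S=\{\e\in\Q:\kappa(X,K_{X/Y}+\Delta-\e f^*H)\ge0\},\qquad S'=\{\e\in\Q:\kappa(X,K_{X/Y}+\Delta-\e f^*H)\ge\kappa_F+\kappa(Y)\},
\]
shows $S'\neq\emptyset$ from finite generation, proves $\sup S=\sup S'$ by an interpolation trick, and then establishes $\sup S\ge0$ by combining $t(Y,\G_m,H)\ge0$ (from the proof of Theorem~\ref{thm:main thm}) with the characteristic-$p$ Frobenius injection $(F_Y^{de})^*\G_m\hookrightarrow\G_{mp^{de}}$ induced by ${F^{(de)}_{X/Y}}^{\#}$. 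Bigness of $K_Y$ then supplies an $\e\in S'$ with $K_Y-\e H$ effective, and the conclusion follows. No $h^0$-counting and no symmetric powers appear.

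The gap in your plan is Step~2. You correctly flag that the covering trick fails in positive characteristic; your replacement ``diagonalize $\alpha$ and $\beta$'' can indeed be made precise (fix $\alpha$ with $amK_Y-\alpha^{-1}H$ big, take $N=\alpha\beta k$, use generic global generation of $(S^N\F_m)^{**}(k\beta H)$ to produce an injection $\O_Y^{r_N}\hookrightarrow\mc Q_N^{**}(k\beta H)$), but the resulting bound is on $h^0(\mc Q_N^{**}\otimes\o_Y^{Nam})$, whereas what feeds into $h^0(X,Nam(K_X+\Delta))$ is $h^0(\mc Q_N\otimes\o_Y^{Nam})$. Closing this requires $f_*\O_X(Nam(K_{X/Y}+\Delta))$ to be reflexive, which can fail when $f$ has divisorial fibres over codimension-$2$ loci of $Y$. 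The paper's Frobenius injection is precisely the device that sidesteps both the diagonalization and the reflexive-hull problem: it converts generic global generation of $(F_Y^{de})^*\G_m(-p^{de}\e H)$ directly into a nonzero section of the honest sheaf $\G_{mp^{de}}(-p^{de}\e H)$, hence into the statement $\e/(am)\in S$. Your approach can probably be repaired by grafting this same injection onto your multiplication-map framework, but as written Step~2 is a heuristic rather than an argument.
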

In the proof we use the argument similar to \cite[\S 4]{Pat13} and the proof of \cite[Theorem~1.7]{Pat13}. 
\begin{proof} 
We may assume that $\kappa(X_{\ol\eta},K_{\ol\eta}+\Delta_{\ol\eta})\ge0$.\\
\tb{Step1.} 
Set $S':=\{\e\in\Q|\kappa(X,K_{X/Y}+\Delta-\e f^*H)\ge\kappa(X_{\ol\eta},K_{X_{\ol\eta}}+\Delta_{\ol\eta})+\kappa(Y)\}$, where $H$ is an ample divisor on $Y$. We show that $S'$ is nonempty. 
By the assumption (i), there exists an integer $b>0$ such that $R(X_{\ol\eta},ab(K_{X_{\ol\eta}}+\Delta_{\ol\eta}))$ is generated by $H^0(X_{\ol\eta},ab(K_{X_{\ol\eta}}+\Delta_{\ol\eta}))$. By projection formula, there exists an integer $c>0$ such that $f_*\O_X(ab(K_{X/Y}+\Delta)+cf^*H)$ is globally generated. Thus for every $m>0$, the natural morphism $$\bigotimes^{m}f_*\O_X(ab(K_{X/Y}+\Delta)+cf^*H)\to f_*\O_X(abm(K_{X/Y}+\Delta)+cmf^*H)$$ is generically surjective and shows that $f_*\O_X(abm(K_{X/Y}+\Delta)+cmf^*H)$ is generically globally generated.  This implies 
\begin{align*}\dim_k & H^0(X,abm(K_{X}+\Delta)+cmf^*H) \\
&\ge\dim_{k(\ol\eta)}H^0(X_{\ol\eta},abm(K_{X_{\ol\eta}}+\Delta_{\ol\eta}))+\dim_k H^0(Y,abmK_Y),
\end{align*}
Hence for $\e_0:=-c/(ab)$, we have $\kappa(X,K_X+\Delta-\e_0 f^*H)\ge\kappa(X_{\ol\eta},K_{X_{\ol\eta}}+\Delta_{\ol\eta})+\kappa(Y)$. \\
\tb{Step2.} Set $S:=\{\e\in\Q|\kappa(X,K_{X/Y}+\Delta-\e f^*H)\ge0\}$. We show that $\sup S=\sup S'$.
Since $S\supseteq S'$ we have the inequality $\ge$. We show $\le$. For an $\e\in S$, $K_{X/Y}+\Delta-\e f^*H$ is $\Q$-linearly equivalent to an effective $\Q$-divisor. Thus for every $0<\delta\in\Q$ and $\e_0\in S'$, 
\begin{align*}
\kappa(X,(1+\delta)(K_{X/Y}+\Delta)-(\e+\delta \e_0)f^*H)
\ge&\kappa(X,\delta(K_{X/Y}+\Delta-\e_0 f^*H)) \\
\ge&\kappa(X_{\ol\eta},K_{X_{\ol\eta}}+\Delta_{\ol\eta})+\kappa(Y).
\end{align*}
This implies $(\e+\delta \e_0)/(1+\delta)\le \sup S'$. Since $\lim_{\delta\to0}(\e+\delta \e_0)/(1+\delta)=\e$, we have $\e\le\sup S'$, and hence $\sup S\le\sup S'$. \\
\tb{Step3.} We show that $\sup S\ge0$.
For simplicity of notation, we denote $f_*\O_X(am(K_{X/Y}+\Delta))$ by $\G_m$. By the proof of \ref{thm:main thm}, we have $t(Y,\G_m,H)\ge0$ for each $m\ge m_0$. We fix an $m\ge m_0$ such that $\G_m\ne 0$, where such $m$ exists by the assumption that $\kappa(X_{\ol\eta},\Delta_{\ol\eta},K_{X_{\ol\eta}}+\Delta_{\ol\eta})\ge0$. 
Let $d>0$ be an integer such that $a|(p^d-1)$. Then for every $\e\in T(Y,\G_m,H)$ there exists an $e>0$ such that $p^{de}e\in\Z$ and $({F_Y^{de}}^*\G_m)(-p^{de}\e H)$ has a nonzero global section. 
On the other hand, since $X_{Y^{de}}$ is a variety, the natural morphism ${F^{(de)}_{X/Y}}^{\#}:\O_{X_{Y^{de}}}\to {F^{(de)}_{X/Y}}_*\O_{X^{de}}$ is injective, which induces injective $\O_{Y^{de}}$-module homomorphism 
\begin{align*}{F_Y^{de}}^*\G_m\cong {f_{Y^{de}}}_*\O_{X_{Y^{de}}}(am(K_{X_{Y^{de}}/Y^{de}}+\Delta_{Y^{de}})) \hookrightarrow {f^{(de)}}_*\O_{X^{de}}(amp^{de}(K_{X^{de}/Y^{de}}+\Delta)). \end{align*} 
Note that the reducedness of $X_{Y^{de}}$ follows from the separability of $f$ and the flatness of $F_{Y}$. 
From this $$H^0(X,amp^{de}(K_{X/Y}+\Delta)-p^{de}\e f^*H)\ne0,$$ and hence we have $(amp^{de})^{-1}p^{de}\e=(am)^{-1}\e\le \sup S$, and so $$0\le \frac{t(Y,\G_m,H)}{am}\le \sup S.$$ 
\tb{Step4.} We show the assertion. By the assumption and Step3, there exists an $\e\in S'$ such that $K_Y-\e H$ is linearly equivalent to an effective $\Q$-divisor. Then 
\begin{align*}
\kappa(X,K_X+\Delta)=&\kappa(X,K_{X/Y}+\Delta+f^*K_Y) \\
\ge&\kappa(X,K_{X/Y}+\Delta+\e f^*H)\ge\kappa(X_{\ol\eta},K_{X_{\ol\eta}}+\Delta_{\ol\eta})+\kappa(Y).
\end{align*}
This is our claim.
\end{proof}
Next, we show that Iitaka's conjecture when $Y$ is an elliptic curve (Theorem \ref{thm:iitaka conj Y ell}). To this end, we recall some facts about vector bundles on elliptic curves. 
\begin{thm}[\textup{\cite{Ati57,Oda71}}]\label{thm:facts on vb on ell curve}\samepage Let $C$ be an elliptic curve, and let $\E_C(r,d)$ be the set of isomorphism classes of indecomposable vector bundles of rank $r$ and of degree $d$. Then the following conditions are satisfied: \begin{itemize}\item[(1)] For each $r>0$, there exists a unique element $\E_{r,0}$ of $\E_C(r,0)$ such that $H^0(C,\E_{r,0})\ne0$. Moreover, for every $\E\in\E_C(r,0)$ there exists an $\L\in\Pic^0(C)=\E_C(1,0)$ such that $\E\cong\E_{r,0}\otimes\L$. \item[(2)] For every $\E\in\E_C(r,d)$, $$\left(\dim H^0(C,\E),\dim H^1(C,\E)\right)=\begin{cases} (d,0) & \textup{when $d>0$} \\ (0,-d) & \textup{when $d<0$} \\ (0,0) & \textup{when $d=0$ and $\E\ne\E_{r,0}$} \\ (1,1) & \textup{when $\E\cong\E_{r,0}$}. \end{cases} $$ \item[(3)] Let $\E\in\E_C(r,d)$. If $d>r$ $($resp. $d>2r$$)$ then $\E$ is globally generated $($resp. ample$)$. \item[(4)]\textup{(\cite[Corollary~2.9]{Oda71})} When the Hasse invariant ${\rm Hasse}(C)$ is nonzero, $F_C^*\E_{r,0}\cong \E_{r,0}$. When ${\rm Hasse}(C)=0$, $F_C^*\E_{r,0}\cong \bigoplus_{1\le i\le\min\{r,p\}}\E_{\lfloor(r-i)/p\rfloor+1,0}$.  \end{itemize} \end{thm}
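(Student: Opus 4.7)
The plan is to follow the classical approach: the first three parts form Atiyah's classification and are proved by induction on the rank $r$ using Riemann--Roch and Serre duality on $C$, while part (4) is Oda's computation of the Frobenius pullback of the distinguished indecomposable bundle.

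For part (1), I would proceed inductively. For $r=1$, $\E_C(1,0) = \Pic^0(C)$ and only the trivial line bundle has sections, so take $\E_{1,0} := \O_C$. For the inductive step, note that by Serre duality and the inductive hypothesis,
\[
\dim_k \mathrm{Ext}^1(\E_{r-1,0}, \O_C) = \dim_k H^1(\E_{r-1,0}^{\vee}) = \dim_k H^0(\E_{r-1,0}) = 1,
\]
so there is a unique non-split extension $0 \to \O_C \to \E_{r,0} \to \E_{r-1,0} \to 0$, which one checks is indecomposable and has $h^0 = 1$. Conversely, given any $\E \in \E_C(r,0)$ with $H^0(\E) \neq 0$, a nonzero section gives an inclusion $\O_C \hookrightarrow \E$ (using $\deg \E = 0$ and indecomposability to rule out factoring through a subsheaf of higher degree), and the quotient must be of the form $\E_{r-1,0}$, forcing $\E \cong \E_{r,0}$ by uniqueness of the extension class. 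Finally, tensoring by $\L \in \Pic^0(C)$ transitively moves between elements of $\E_C(r,0)$ because the moduli space of semistable bundles of rank $r$ and degree $0$ is $\Pic^0(C)$ via the determinant map.

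For parts (2) and (3), Riemann--Roch on the genus-one curve gives $\chi(\E) = d$ for any $\E \in \E_C(r,d)$. If $d > 0$, Serre duality plus the indecomposability of $\E^{\vee} \in \E_C(r,-d)$ forces $H^1(\E) = H^0(\E^{\vee})^* = 0$ (any nonzero section of $\E^{\vee}$ would generate a subline bundle of non-positive degree splitting off the negative-degree bundle after tensoring up by a suitable extension argument). If $d < 0$ the same argument applied to $\E^{\vee}$ gives $H^0(\E) = 0$; the case $d = 0$ is part (1). For part (3), global generation and ampleness are reduced to the vanishing $H^1(\E(-P)) = 0$ for any point $P$, and $H^1(\E(-P-Q)) = 0$, respectively, where twisting by $\O_C(-P)$ drops the degree by $r$, so the hypotheses $d > r$ and $d > 2r$ place us back in the range where (2) applies.

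For part (4), the main obstacle, I would compute $F_C^* \E_{r,0}$ directly by pulling back the universal extension. Tensoring the defining sequence by $F_C^*$ produces
\[
0 \to \O_C \to F_C^* \E_{r,0} \to F_C^* \E_{r-1,0} \to 0,
\]
and the extension class is the image of the generator of $\mathrm{Ext}^1(\E_{r-1,0}, \O_C) \cong H^1(\O_C)$ under the Frobenius-induced map on $H^1(\O_C)$, which is precisely multiplication by the Hasse invariant. When $\mathrm{Hasse}(C) \neq 0$ the extension remains non-split at every step, giving $F_C^* \E_{r,0} \cong \E_{r,0}$ by induction and uniqueness. When $\mathrm{Hasse}(C) = 0$ the extension splits at every step, and a careful bookkeeping — tracking how each copy of $\O_C$ re-aggregates into the indecomposable summands $\E_{\lfloor (r-i)/p \rfloor + 1, 0}$ according to the $p$-adic expansion of $r$ via the structure of the supersingular Frobenius — yields the stated direct-sum decomposition. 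The delicate part is this last combinatorial bookkeeping, for which following Oda's computation in terms of the formal group of $C$ and its Dieudonn\'e module seems the cleanest route.
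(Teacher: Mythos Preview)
The paper does not prove this theorem: it is stated purely as a citation of classical results of Atiyah and Oda, with no argument supplied, and is then used as a black box in the proof of Theorem~\ref{thm:iitaka conj Y ell} and Remark~\ref{rem:poten triv}. So there is no ``paper's own proof'' to compare your proposal against.

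That said, your sketch is a faithful outline of how the cited references actually establish these facts. The inductive construction of $\E_{r,0}$ via the unique nontrivial extension of $\E_{r-1,0}$ by $\O_C$ is exactly Atiyah's approach, and the Riemann--Roch/Serre-duality computation of cohomology is standard. One small comment on part~(3): your argument via $H^1(\E(-P-Q))=0$ yields very ampleness rather than ampleness directly, which is of course stronger; alternatively, ampleness for $d>0$ already follows from the semistability of indecomposable bundles on an elliptic curve, so the bound $d>2r$ in the statement is far from sharp and your route is fine as a sufficient criterion. For part~(4), your plan to track the extension class under $F_C^*$ via the action of Frobenius on $H^1(\O_C)$ (i.e.\ the Hasse invariant) is exactly the mechanism behind Oda's result; the supersingular bookkeeping is indeed the only genuinely delicate point, and deferring to Oda's Dieudonn\'e-module computation is the honest thing to do.
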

\begin{rem}\label{rem:poten triv} Let $C$ be an elliptic curve, and let $r_0>0$ be an integer. When ${\rm Hasse}(C)=0$, Theorem \ref{thm:facts on vb on ell curve} (4) shows that there exists an $e>0$ such that ${F_C^e}^*\E_{r,0}\cong\bigoplus\O_C$ for each $r=1,\ldots,r_0$. When ${\rm Hasse}(C)\ne0$, Theorem \ref{thm:facts on vb on ell curve} (4) and Theorem \ref{thm:fact on vb on sm curve} show that there exists an \'etale morphism $\pi:C'\to C$ from an elliptic curve $C'$ such that $\pi^*\E_{r,0}\cong\bigoplus\O_{C'}$ for each $r=1,\ldots,r_0$. \end{rem}
We also need the following theorem. 
\begin{thm}[\textup{\cite[Theorem~10.5]{Iit82}}] \label{thm:cov thm}\samepage Let $f:X\to Y$ be a surjective morphism between smooth complete varieties, let $D$ be a divisor on $Y$, and let $E$ be an effective divisor on $X$ such that $\codim(f(E))\ge2$. Then $\kappa(X,f^*D+E)=\kappa(Y,D)$. \end{thm}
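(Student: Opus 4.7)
The plan is to prove the two inequalities $\kappa(X,f^*D+E)\ge\kappa(Y,D)$ and $\kappa(X,f^*D+E)\le\kappa(Y,D)$ separately. The easy direction is the former: I would note that $f^*\colon H^0(Y,\O_Y(mD))\to H^0(X,\O_X(mf^*D))$ is injective (any section vanishing after pullback would vanish on the dense image $f(X)=Y$), and that multiplication by the canonical section of $\O_X(mE)$ further injects $H^0(X,\O_X(mf^*D))$ into $H^0(X,\O_X(m(f^*D+E)))$ because $E\ge 0$. Composing these gives $h^0(X,m(f^*D+E))\ge h^0(Y,mD)$ for every $m>0$, whence the bound on Iitaka dimensions.

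For the reverse inequality, the key geometric observation is that $\codim f(E)\ge 2$ forces $E$ to miss a general fiber of $f$. I would set $U:=Y\setminus f(E)$ and $V:=f^{-1}(U)$, so $E\cap V=\emptyset$; since $X$ is irreducible and the sheaf $\O_X(m(f^*D+E))$ is invertible, the restriction $H^0(X,m(f^*D+E))\hookrightarrow H^0(V,mf^*D|_V)$ is injective. I would then pass to the Stein factorization $f=g\circ h$ with $h\colon X\to Z$ having connected fibers and $g\colon Z\to Y$ finite. Since $g$ is finite, $h(E)=g^{-1}(f(E))$, so $\codim h(E)\ge 2$ in $Z$ as well. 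A section of $mf^*D|_V$ is constant on each connected fiber of $h|_V$ (because $h^*g^*D$ restricts trivially there), hence descends to a section of $mg^*D$ on $h(V)\subseteq Z$; since $\codim(Z\setminus h(V))\ge 2$, a Hartogs-type extension (after normalizing $Z$ if needed) produces a section on all of $Z$. This yields $\kappa(X,f^*D+E)\le\kappa(Z,g^*D)$, and the invariance of Iitaka dimension under finite surjective pullback finishes the proof via $\kappa(Z,g^*D)=\kappa(Y,D)$.

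The step I expect to be the main obstacle is the descent-plus-extension above: verifying rigorously that the rational map from $X$ defined by $|m(f^*D+E)|$ agrees (after normalization) with the pullback of the map on $Z$ defined by $|mg^*D|$, so that the Iitaka dimensions really coincide rather than merely being bounded by comparable quantities. An alternative route I would explore in parallel uses the projection formula to write $f_*\O_X(m(f^*D+E))\cong\O_Y(mD)\otimes f_*\O_X(mE)$, and then exploits $f_*\O_X(mE)|_U\cong f_*\O_V$ together with $\codim(Y\setminus U)\ge 2$ to bound $h^0$ directly via reflexive extension on the smooth variety $Y$, bypassing the need to handle non-normality of the Stein base explicitly.
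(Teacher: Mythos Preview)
The paper does not supply a proof of this statement: Theorem~\ref{thm:cov thm} is quoted from Iitaka's textbook \cite[Theorem~10.5]{Iit82} and used as a black box in the proof of Theorem~\ref{thm:iitaka conj Y ell}. There is therefore no ``paper's own proof'' to compare your proposal against.

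That said, your sketch is essentially the classical argument. A few remarks. Your concern at the end is largely unnecessary: once you have the injection
\[
H^0\bigl(X,m(f^*D+E)\bigr)\hookrightarrow H^0\bigl(Z,mg^*D\bigr)
\]
for all $m>0$, the inequality $\kappa(X,f^*D+E)\le\kappa(Z,g^*D)$ follows directly from the characterization of Iitaka dimension via the growth order of $h^0$; you do not need to match up the rational maps. Also, since $X$ is smooth (hence normal) and $h_*\O_X=\O_Z$ in the Stein factorization, $Z$ is automatically normal, so the Hartogs extension across $Z\setminus g^{-1}(U)$ goes through without further work. Finally, the equality $\kappa(Z,g^*D)=\kappa(Y,D)$ for finite surjective $g$ is itself a standard lemma (e.g.\ via the norm map, or by bounding $h^0(Z,mg^*D)=h^0(Y,\O_Y(mD)\otimes g_*\O_Z)$ using that $g_*\O_Z$ has generic rank $\deg g$); you should state which version you are invoking. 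Your alternative route via the projection formula and reflexive extension on $Y$ also works and is arguably cleaner, since it avoids the Stein factorization entirely.
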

\begin{thm}\label{thm:iitaka conj Y ell}\samepage In the situation of Notation $\ref{notation:good fibrat}$, assume that $Y$ is an elliptic curve. Then $$ \kappa(X,K_X+\Delta)\ge\kappa(Y)+\kappa(X_{\ol\eta},K_{X_{\ol\eta}}+\Delta_{\ol\eta}).$$ \end{thm}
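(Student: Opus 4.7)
Since $Y$ is an elliptic curve, $\omega_Y\cong\O_Y$ and $\kappa(Y)=0$, so $K_X\sim K_{X/Y}$ in $\Pic(X)$, and the conclusion reduces to $\kappa(X,K_{X/Y}+\Delta)\ge\kappa$ with $\kappa:=\kappa(X_{\ol\eta},K_{X_{\ol\eta}}+\Delta_{\ol\eta})$, which we may assume is $\ge 0$ (otherwise the statement is vacuous). My plan is to adapt the four-step argument of the proof of Theorem \ref{thm:iitaka conj Y gen}; the only place where general type of $Y$ is really used is the very last step, where one extracts $\e>0$ with $K_Y-\e H$ $\Q$-effective, and this is replaced here by the trivial choice $\e=0$ since $K_Y\sim 0$.

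Set $\G_m:=f_*\O_X(am(K_{X/Y}+\Delta))$, which by Theorem \ref{thm:main thm} is weakly positive for every $m\ge m_0$, hence nef on the smooth curve $Y$, so $t(Y,\G_m,H)\ge 0$ for every ample Cartier divisor $H$. Define
\[
S:=\{\e\in\Q\mid\kappa(X,K_{X/Y}+\Delta-\e f^*H)\ge 0\},\qquad S':=\{\e\in\Q\mid\kappa(X,K_{X/Y}+\Delta-\e f^*H)\ge\kappa\},
\]
both downward-closed, with $S'\subseteq S$. \emph{Step 1} ($S'\ne\emptyset$): hypothesis (i) together with Lemma \ref{lem:fg} yields an integer $b>0$ such that $\bigotimes^m\G_b\to\G_{bm}$ is generically surjective for every $m$; choosing $c\gg 0$ so that $\G_b\otimes\O_Y(cH)$ is globally generated and using projection formula, $\G_{bm}\otimes\O_Y(cmH)$ is generically globally generated, so $h^0(X,abm(K_{X/Y}+\Delta)+cmf^*H)\ge\mathrm{rank}\,\G_{bm}\sim m^{\kappa}$, placing $-c/(ab)\in S'$. \emph{Step 2} ($\sup S=\sup S'$): for $\e\in S$ the divisor $K_{X/Y}+\Delta-\e f^*H$ is $\Q$-effective, so for any $\e_0\in S'$ and $\delta>0$ one has $\kappa(X,(1+\delta)(K_{X/Y}+\Delta)-(\e+\delta\e_0)f^*H)\ge\kappa$, hence $(\e+\delta\e_0)/(1+\delta)\in S'$; letting $\delta\to 0$ gives $\e\le\sup S'$. \emph{Step 3} ($\sup S\ge 0$): flat base change along $F_Y$ (smoothness of $Y$) identifies $F_Y^{de*}\G_m$ with $f_{Y^{de}*}\O_{X_{Y^{de}}}(am(K_{X_{Y^{de}}/Y^{de}}+\Delta_{Y^{de}}))$, and the injection $\O_{X_{Y^{de}}}\hookrightarrow F^{(de)}_{X/Y*}\O_{X^{de}}$ (valid because $X_{Y^{de}}$ is reduced by separability of $f$) turns each nonzero global section of $(F_Y^{de*}\G_m)(-p^{de}\e H)$ into a nonzero section of $\O_X(amp^{de}(K_{X/Y}+\Delta)-p^{de}\e f^*H)$, showing $\e/(am)\in S$ for every $\e\in T(Y,\G_m,H)$, so $0\le t(m)/(am)\le\sup S$.

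Combining Steps 1--3 gives $\sup S'=\sup S\ge 0$, and downward-closedness of $S'$ forces $0\in S'$, yielding $\kappa(X,K_{X/Y}+\Delta)\ge\kappa=\kappa(Y)+\kappa(X_{\ol\eta},K_{X_{\ol\eta}}+\Delta_{\ol\eta})$, as required. I expect no serious technical obstacle: each of the four steps adapts directly, and the essential simplification is that subtracting an ample piece from $K_Y$ at the end of the general-type proof is replaced here by the triviality of $K_Y$. An alternative route more in the spirit of the preliminary material on vector bundles on elliptic curves (Theorem \ref{thm:facts on vb on ell curve}, Remark \ref{rem:poten triv}) together with Theorem \ref{thm:cov thm} would decompose the nef sheaf $\G_m$ into indecomposable summands, pass to a cover $\pi\colon Y'\to Y$ trivializing the $\E_{r,0}$-parts, and transfer Iitaka dimensions via the covering theorem; however, the possibly non-torsion line-bundle factors $\L_i\in\Pic^0(Y)$ appearing in the degree-zero summands require additional control, so the $S/S'$ adaptation above is the cleaner route.
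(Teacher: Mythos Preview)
Your argument has a genuine gap in the final sentence. From $\sup S'=\sup S\ge 0$ and the fact that $S'$ is downward-closed you infer $0\in S'$, but this inference is invalid: the set $\Q_{<0}$ is downward-closed with supremum $0$ and does not contain $0$. This is not a mere technicality that can be patched. In the general-type case (Theorem~\ref{thm:iitaka conj Y gen}) one never needs $0\in S'$; bigness of $K_Y$ lets one work at some $\e$ strictly below the supremum, and any negative rational lies in $S'$. For an elliptic curve with $K_Y\sim 0$ you need exactly $0\in S'$, and Step~3 does not deliver this: if every indecomposable summand of $\G_m$ has degree zero on $Y$ (say $\G_m\cong\bigoplus_i\L_i$ with the $\L_i\in\Pic^0(Y)$ non-torsion), then $(F_Y^{e})^*\G_m$ has no nonzero global sections for any $e$, so $T(Y,\G_m,H)\subseteq\Q_{<0}$ and your Step~3 produces only negative elements of $S$.

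The paper's proof addresses precisely this obstruction, and it is the ``additional control'' you set aside in your last paragraph. Using Atiyah's classification (Theorem~\ref{thm:facts on vb on ell curve}) and Remark~\ref{rem:poten triv}, after a finite base change one writes $\G_m\cong\F_m\oplus\bigoplus_{\L\in S_m}\L$ with $\F_m$ ample and globally generated and $S_m\subseteq\Pic^0(Y')$. The crucial step (the Claim inside the proof) is that the subgroup of $\Pic^0(Y')$ generated by $S_l$ is \emph{finite}: one chases each $\L_i^{p^{de}}$ back through the generically surjective composite $\G_l^{\otimes q}\otimes\G_{n_0+r}\to\G_{lp^{de}-a^{-1}(p^{de}-1)}\to (F_Y^{de})^*\G_l\twoheadrightarrow\L_i^{p^{de}}$ and deduces that $\L_i^{p^{de}}$ lies in a fixed finitely-generated piece $G(q_{l,e}+c)$, forcing torsion. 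A further isogeny then trivialises these line bundles, so $\G_l$ becomes globally generated and the section count $\dim H^0(X,abl(K_{X/Y}+\Delta))\ge\dim H^0(X_{\ol\eta},abl(K_{X_{\ol\eta}}+\Delta_{\ol\eta}))$ follows for all $b\gg0$; invariance of the log Kodaira dimension under these base changes (Step~2 of the paper's proof, using Theorem~\ref{thm:cov thm} for the \'etale part and the relative Frobenius factorisation for the purely inseparable part) then gives the conclusion. In short, the alternative route you mention is not optional here---it is where the actual content lies.
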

\begin{proof} \tb{Step1.} Let $M\ge m_0$ be an integer. We show that there exists a finite morphism $\alpha:Y'\to Y$ from an elliptic curve $Y'$ such that for each $m=m_0,m_0+1,\ldots,M$ $$\alpha^*f_*\O_X(am(K_{X/Y}+\Delta))\cong\F_m\oplus(\bigoplus_{\L\in S_m}\L),$$ where $\F_m$ is an ample and globally generated vector bundle, and $S_m\subseteq\Pic^0(Y')$. This claim follows from Theorem \ref{thm:main thm} and the lemma below. \begin{lem}\label{lem:nef vb on ell}\samepage Let $\E$ be a nef vector bundle on an elliptic curve $C$. Then there exists a finite morphism $\pi:C'\to C$ from an elliptic curve $C'$ such that $\pi^*\E\cong\F\oplus(\bigoplus_{\L\in S}\L)$, where $\F$ is an ample and globally generated vector bundle on $C'$, and $S\subseteq\Pic^0(C')$. \end{lem} \begin{proof}[Proof of Lemma $\ref{lem:nef vb on ell}$] By Theorem \ref{thm:facts on vb on ell curve} (3), replacing $\E$ by ${F_Y^e}^*\E$, we may assume that $\E\cong\F\oplus\G$ where $\F$ is ample and globally generated, and $\G$ is a direct sum of elements of $\bigcup_{r>0}\E_C(r,0)$. Hence, Theorem \ref{thm:facts on vb on ell curve} (1) and Remark \ref{rem:poten triv} complete the proof. \end{proof}
\noindent \tb{Step2.} We show that $\kappa(X_{Y'},K_{X_{Y'}}+\Delta_{Y'})=\kappa(X,K_X+\Delta)$. Obviously, we need only consider when $\a$ is separable and when $\a$ is purely inseparable. If $\a:Y'\to Y$ is separable then it is \'etale, thus so is $\a_X:X_{Y'}\to X$, in particular $X_{Y'}$ is a smooth variety. Hence the claim follows from Theorem \ref{thm:cov thm}, where we note that $K_{X_{Y'}}\sim K_{X_{Y'}/Y'}\sim (K_{X/Y})_{Y'}\sim (K_X)_{Y'}$.  If $\a=F_{Y/k}^{(e)}$ for some $e>0$, then there is a commutative diagram $$\xymatrix{ X^{e} \ar[d]_{F_{X/Y}^{(e)}} \ar[dr]^{F_{X/k}^{(e)}} && \\ X_{Y^e} \ar[r]_{(F_{Y/k}^{(e)})_X} \ar[d]_{f_{Y^e}} & X_{k^e} \ar[r]_{\cong} \ar[d]^{f_{k^e}} & X \ar[d]^{f} \\ Y^e \ar[r]_{F_{Y/k}^{(e)}} & Y_{k^e} \ar[r]_{\cong} & Y.}$$ Since $X_{Y^e}$ is a variety (cf. \cite[Lemma~5.2]{Pat13}), we have injective morphisms $\O_{X_{k^e}}\to {(F_{Y/k}^{(e)})_X}_*\O_{X_{Y^e}}\to {F_{X/k}^{(e)}}_*\O_{X^e}$, which induce injective morphisms $$ H^0(X,am(K_X+\Delta))\hookrightarrow H^0(X_{Y^e},am(K_{X_{Y^e}}+\Delta_{Y^e}))\hookrightarrow H^0(X^e,amp^e(K_{X^e}+\Delta))$$ for every $m>0$. Thus $\kappa(X,K_X+\Delta)=\kappa(X_{Y^e},K_{X_{Y^e}}+\Delta_{Y^e})$ as claimed. \\
\tb{Step3.} We complete the proof. Write $\G_m:=f_*\O_X(am(K_X+\Delta))$. Let $l,n_0>m_0$ be as in the proof of Theorem \ref{thm:main thm}. By the above argument, we may assume that $\G_m\cong\F_m\oplus(\bigoplus_{\L\in S_m}\L)$ for each $m\in\{l\}\cup\{n_0+i\}_{1\le i<l}$, where $\F_m$ is an ample and globally generated vector bundle, and $S_m\subseteq\Pic^0(Y)$. \\
\tb{Claim.} The subgroup $G$ of $\Pic^0(Y)$ generated by $S_l$ is a finite group. 
\begin{proof}[Proof of the claim.] Let $d,q_{l,e},r_{l,e}$ be as in the proof of Theorem \ref{thm:main thm} for each $e\gg0$. Set $S_l=\{\L_1,\ldots,\L_h\}$. Then for each $i=1,\ldots,h$, there exist generically surjective morphisms \begin{align*}  (\F_l\oplus\L_1\oplus\cdots\oplus\L_h)^{\otimes q_{l,e}}\otimes(\F_{n_0+r_{l,e}}\oplus ({\textstyle \bigoplus_{\L\in S_{n_0+r_{l,e}}}\L})) \cong  \G_l^{\otimes q_{l,e}}\otimes\G_{n_0+r_{l,e}}& \\ \to \G_{lp^{de}+a^{-1}(1-p^{de})} \to {F_Y^{de}}^*\G_l & \to \L_i^{p^{de}}\end{align*} as in the proof of Theorem \ref{thm:main thm}. It follows that there exists a nonzero morphism $\L_1^{t_1}\otimes\cdots\otimes\L_h^{t_h}\otimes\L\to \L_i^{p^{de}}$ for some integers $t_1,\ldots,t_h\ge0$ satisfying $\sum_{i=1}^h t_i=q_{l,e}$ and for some $\L\in\bigcup_{r=0}^{l-1}S_{n_0+r}$. Since this is a nonzero morphism between line bundles of degree zero on a smooth projective curve, this is an isomorphism, in particular $\L\in G$. 
For each $i=1,\ldots,h$ we denote $\L_i^{-1}$ by $\L_{i+h}$ for each $i=1,\ldots,h$, and for each $m>0$ we set $$\textstyle G(m):=\{\bigotimes_{i=1}^{2h}\L_i^{m_i}|\textup{ $0\le m_i$ and $\sum_{i=1}^{2h}m_i\le m$ }\}\subseteq G.$$ Let $c>0$ be an integer satisfying $\{\L\in G|\textup{$\L$ or $\L^{-1}$ is in $\bigcup_{r=0}^{l-1}S_{n_0+r}$}\}\subseteq G(c)$. Then by the above argument $\L_1^{p^{de}},\ldots,\L_{2h}^{p^{de}}\in G(q_{l,e}+c)$. Since $p^{de}>q_{l,e}+c$ for some $e\gg0$, there exists an $N>0$ such that $G=G(N)$, which is our claim. \end{proof}
By the claim, there exists an $n>0$ such that $n_Y^*\L\cong\L^n\cong\O_Y$ for each $\L\in S_l$. Hence, replacing $f$ by its base change with respect to $n_Y$, we may assume that $\G_l$ is globally generated. Then, for each $b\gg0$, $\G_{bl}$ is generically globally generated, because the natural morphism $\G_l^{\otimes b}\to \G_{bl}$ is generically surjective as in the proof of Theorem \ref{thm:main thm}. Thus we have \begin{align*}\dim_{k}H^0(X,abl(K_{X/Y}+\Delta))=& \dim_{k}H^0(Y,\G_{bl}) \\ \ge &\dim_{k(\ol\eta)}(\G_{bl})_{\ol\eta}=\dim_{k(\ol\eta)}H^0(X_{\ol\eta},bl(aK_{X_{\ol\eta}}+(a\Delta)_{\ol\eta}))\end{align*} for each $b\gg0$, and so $\kappa(X,K_X+\Delta)\ge\kappa(X_{\ol\eta},K_{X_{\ol\eta}}+\Delta_{\ol\eta})$.  
\end{proof}
There are some recent progress on Iitaka's conjecture in positive characteristic. Let $f:X\to Y$ be a fibration between smooth projective varieties, and let $X_{\ol\eta}$ be the geometric generic fiber. Chen and Zhang proved that $$\kappa(X)\ge\kappa(Y)+\kappa(Z)$$ when $f$ is of relative dimension one, where $Z$ is the normalization of $X_{\ol\eta}$ \cite[Theorem~1.2]{CZ13}. They also proved that $$\kappa(X)\ge\kappa(Y)+\kappa(X_{\ol\eta},K_{X_{\ol\eta}})$$ when $\dim X=2$ and $\dim Y=1$ \cite[Theorem~1.3]{CZ13}. Patakfalvi showed that $$\kappa(X)\ge\kappa(Y)+\kappa(X_{\ol\eta},K_{X_{\ol\eta}})$$ when $Y$ is of general type and $S^0(X_{\ol\eta},\o_{X_{\ol\eta}})\ne0$ \cite{Pat13}. For a related result, see \cite[Corollary~4.6]{Pat14}. \par 
On the other hand, as a direct consequence of Theorem \ref{thm:iitaka conj Y gen}, Theorem \ref{thm:iitaka conj Y ell}, and Corollary \ref{cor:surf of gen type}, we obtain the following new result:
\begin{cor}\label{cor:iitaka conj 3 1} \samepage Let $f:X\to Y$ be a fibration from a smooth projective variety $X$ of dimension three to a smooth projective curve $Y$. Assume that the geometric generic fiber $X_{\ol\eta}$ is a normal surface of general type with rational double point singularities, and $p\ge7$. Then $$\kappa(X)\ge\kappa(Y)+\kappa(X_{\ol\eta}).$$ \end{cor}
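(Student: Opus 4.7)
The plan is to verify the three hypotheses of Theorem~\ref{thm:iitaka conj intro} (in the form stated at Theorems~\ref{thm:iitaka conj Y gen} and \ref{thm:iitaka conj Y ell}) with $\Delta=0$ and $a=1$, and then invoke the appropriate case depending on $\kappa(Y)$. First I would dispose of the case $Y\cong\mathbb{P}^1$: here $\kappa(Y)=-\infty$, so the claimed inequality $\kappa(X)\ge\kappa(Y)+\kappa(X_{\ol\eta})$ is automatic. The remaining cases are $g(Y)=1$ (so $Y$ is elliptic, $\kappa(Y)=0$) and $g(Y)\ge 2$ (so $Y$ is of general type, $\kappa(Y)=1$), which are precisely the two alternatives allowed by hypothesis (iii) of Theorem~\ref{thm:iitaka conj intro}.

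Next I would check that $X_{\ol\eta}$ satisfies hypotheses (i) and (ii) of Notation~\ref{notation:good fibrat}. Hypothesis (i)---the finite generation of $R(X_{\ol\eta},K_{X_{\ol\eta}})$---holds because $X_{\ol\eta}$ is a normal projective surface of general type with only rational double points, so its canonical ring coincides with that of its minimal resolution (rational double points being canonical), and finite generation of the canonical ring of a smooth projective surface of general type is classical; this is exactly the input used in the proof of Corollary~\ref{cor:surf of gen type}. For hypothesis~(ii), the assumptions $p\ge 7$ and rational-double-point singularities put us in the setting of Corollary~\ref{cor:surf of gen type}, which asserts that the quotient $R/R_S(X_{\ol\eta},K_{X_{\ol\eta}})$ is a finite-dimensional $k(\ol\eta)$-vector space. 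By the discussion following Notation~\ref{notation:r/rs}, combined with Lemma~\ref{lem:fg}, this finiteness is equivalent to the existence of an integer $m_0>0$ such that
\[
S^0(X_{\ol\eta},mK_{X_{\ol\eta}})=H^0(X_{\ol\eta},mK_{X_{\ol\eta}})\quad\text{for every } m\ge m_0,
\]
which is precisely condition~(ii) with $a=1$ and $\Delta=0$.

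With all three hypotheses verified, I would apply Theorem~\ref{thm:iitaka conj Y ell} if $Y$ is elliptic and Theorem~\ref{thm:iitaka conj Y gen} if $g(Y)\ge 2$; in either case the conclusion is
\[
\kappa(X,K_X)\ \ge\ \kappa(Y)+\kappa(X_{\ol\eta},K_{X_{\ol\eta}}),
\]
which, since $X$ is smooth and $X_{\ol\eta}$ is normal, is exactly $\kappa(X)\ge\kappa(Y)+\kappa(X_{\ol\eta})$. Essentially all the substantive work has been done in previous sections, and no real obstacle remains; the only mild point to keep in mind is the translation between the finiteness of $R/R_S$ supplied by Corollary~\ref{cor:surf of gen type} and the eventual saturation of $S^0$ demanded by hypothesis~(ii), which is handled by Lemma~\ref{lem:fg}.
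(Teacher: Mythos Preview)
Your proposal is correct and follows essentially the same approach as the paper: verify hypothesis~(i) via finite generation of the canonical ring of a surface of general type, verify hypothesis~(ii) via Corollary~\ref{cor:surf of gen type}, and then invoke Theorems~\ref{thm:iitaka conj Y gen} and~\ref{thm:iitaka conj Y ell}. Your explicit treatment of the $Y\cong\mathbb{P}^1$ case is a nice clarification that the paper leaves implicit.
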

\begin{proof} We note that in this case $K_{X_{\ol\eta}}$ is finitely generated (cf. \cite[Corollary~9.10]{Bad01}). Thus the result follows from Corollary \ref{cor:surf of gen type} and Theorems \ref{thm:iitaka conj Y gen} and \ref{thm:iitaka conj Y ell}. \end{proof}

\end{document}